%%%%%%%%%%%%%%%%%%%%%%%%%%%%%%%%%%%%%%%%%%%%%%%%%%%%%%%%%%%%%%%%%
% This template lays out the Epiga style
%%%%%%%%%%%%%%%%%%%%%%%%%%%%%%%%%%%%%%%%%%%%%%%%%%%%%%%%%%%%%%%%%
\documentclass[11pt]{amsart}

\usepackage{epigamath}
\usepackage{microtype}

%%%%%%%%%%%%%%%%%%%%%%%%%%%%%%%%%%%%%%%%%%%%%%
% Comment/uncomment/modify for other languages
%%%%%%%%%%%%%%%%%%%%%%%%%%%%%%%%%%%%%%%%%%%%%%

%\usepackage[french]{babel}
\usepackage[english]{babel}

%%%%%%%%%%%%%%%%%%%%%%%%%%%%%%%%%%%%%
% Specify the numbering of equations
%%%%%%%%%%%%%%%%%%%%%%%%%%%%%%%%%%%%%

\numberwithin{equation}{section}

%%%%%%%%%%%%%%%%%%%%%%%%%%%%%%%%%%%%%
% Specify any other package you need
%%%%%%%%%%%%%%%%%%%%%%%%%%%%%%%%%%%%%

\usepackage{amscd,graphicx,epsfig}

\usepackage[normalem]{ulem}
\let\isout\sout
\renewcommand{\sout}[1]{\ifmmode\text{\isout{\ensuremath{#1}}}\else\isout{#1}\fi}

%%%%%%%%%%%%%%%%%%%%%%%%%%%%%%%%%%%%%
% Define any new environment you need
%%%%%%%%%%%%%%%%%%%%%%%%%%%%%%%%%%%%%

\newtheorem{thm}{Theorem}[section]

\newtheorem{prop}[thm]{Proposition}
\newtheorem{lem}[thm]{Lemma}
\newtheorem{cor}[thm]{Corollary}

\theoremstyle{definition}
\newtheorem{defn}[thm]{Definition}
\newtheorem{exa}[thm]{Example}
\newtheorem*{thanks*}{Acknowledgments}

\newtheorem*{rem*}{Remark}
\newtheorem{rem}[thm]{Remark}

%%%%%%%%%%%%%%%%%%%%%%%%%%%%%%%%%%%%%
% Define any new command you need
%%%%%%%%%%%%%%%%%%%%%%%%%%%%%%%%%%%%%

\newcommand{\DDD}{\mathcal D}
\newcommand{\FFF}{\mathcal F}
\newcommand{\NNN}{\mathcal N}
\newcommand{\VVV}{\mathcal V}
\newcommand{\WWW}{\mathcal W}
\newcommand{\E}{\mathcal E}
\newcommand{\LLL}{\mathcal L}
\newcommand{\GGG}{\mathcal G}
\newcommand{\SSS}{\mathcal S}
\newcommand{\eps}{\varepsilon}
\renewcommand{\phi}{\varphi}
\newcommand{\VFG}{\VF_{G}}
\newcommand{\di}{\frac{\partial}{\partial x_{i}}}
\newcommand{\bk}{\Bbbk}
\newcommand{\kst}{\Bbbk^{*}}
\newcommand{\NN}{\mathbb N}
\newcommand{\PP}{\mathbb P}
\newcommand{\RR}{\mathbb R}
\newcommand{\CC}{\mathbb C}
\renewcommand{\AA}{\mathbb A}
\newcommand{\mm}{\mathfrak m}
\newcommand{\pp}{\mathfrak p}
\newcommand{\Aone}{\AA^{1}}
\renewcommand{\subset}{\subseteq}
\newcommand{\onto}{\twoheadrightarrow}
\newcommand{\into}{\hookrightarrow}
\newcommand{\SLtwo}{\SL_{2}}
\newcommand{\quot}{/\!\!/}
\newcommand{\nn}{\left[\begin{smallmatrix} 0 & 1 \\ 0 & 0\end{smallmatrix}\right]}
\newcommand{\name}[1]{\textsc{#1\/}}
\newcommand{\simto}{\xrightarrow{\sim}}
\newcommand{\be}{\begin{enumerate}}
\newcommand{\ee}{\end{enumerate}}

\renewcommand{\gg}{\mathfrak g}
\newcommand{\uu}{\mathfrak u}
\newcommand{\Om}{O_{\text{\it min}}}
\newcommand{\bOm}{\overline{\Om}}
\newcommand{\ps}{\par\smallskip}
\newcommand{\ms}{\par\medskip}

\newcommand{\g}[3]{\left[\begin{smallmatrix}1&#1&#2\\&1&#3\\&&1\end{smallmatrix}\right]}
\newcommand{\vv}[3]{\left[\begin{smallmatrix}0&#1&#2\\&0&#3\\&&0\end{smallmatrix}\right]}

\newcommand{\VG}{\VF_G}
\newcommand{\VSL}{\VF_{\SLtwo}}

\DeclareMathOperator{\VF}{Vec} \DeclareMathOperator{\Der}{Der}
\newcommand{\Derc}{\Der^{c}}
\DeclareMathOperator{\End}{End} \DeclareMathOperator{\Aut}{Aut}
\DeclareMathOperator{\Mor}{Mor} \DeclareMathOperator{\id}{id}
\DeclareMathOperator{\Ad}{Ad} \DeclareMathOperator{\M}{M}
\DeclareMathOperator{\codim}{codim} \DeclareMathOperator{\Gr}{Gr}
\DeclareMathOperator{\Lie}{Lie} \DeclareMathOperator{\tdeg}{tdeg}
\DeclareMathOperator{\ad}{ad} \DeclareMathOperator{\Spec}{Spec}
\DeclareMathOperator{\Cov}{Cov}
\DeclareMathOperator{\sltwo}{{\mathfrak sl}_{2}}
\DeclareMathOperator{\SL}{SL} \DeclareMathOperator{\GL}{GL}
\newcommand{\GLn}{\GL_{n}}
\newcommand{\Ftt}{{}_3F_2}
\DeclareMathOperator{\OOO}{\mathcal O}
\DeclareMathOperator{\Norm}{Norm} \DeclareMathOperator{\res}{res}

\frenchspacing

%%%%%%%%%%%%%%%%%%%%%%%%%%%%%%%%%%%%%%%%%%%%%%%%%%%%%%%%%%%%%%%%%
% Volume, dates, title, author(s), abstract, keywords, MSC class
%%%%%%%%%%%%%%%%%%%%%%%%%%%%%%%%%%%%%%%%%%%%%%%%%%%%%%%%%%%%%%%%%

\EpigaVolumeYear{4}{2020} \EpigaArticleNr{13} \ReceivedOn{December
16, 2019}
%\InFinalFormOn{}
\InFinalFormOn{June 17, 2020} \AcceptedOn{July 25, 2020}

\title{Covariants, derivation-invariant subsets, and first integrals}
\titlemark{Covariants, derivation-invariant subsets, and first integrals}

\author{Frank Grosshans}
\address{Department of Mathematics, West Chester University, West Chester, PA 19383, USA}
\email{fgrosshans@wcupa.edu}
\author{Hanspeter Kraft}
\address{Departement Mathematik und Informatik,
Universit\"at Basel, Spiegelgasse 1, CH-4051 Basel, Switzerland}
\email{Hanspeter.Kraft@unibas.ch}

\authormark{F. Grosshans and H. Kraft}

\AbstractInEnglish{\scriptsize{Let $\bk$ be an algebraically closed field of
characteristic $0$, and let $V$ be a finite-dimensional vector
space. Let $\End(V)$ be the semigroup of all polynomial
endomorphisms of $V$. Let $\E\subset \End(V)$ be a linear subspace
which is also a subsemigroup. Both $\End(V)$ and $\E$ are
ind-varieties which act on $V$ in the obvious way.

In this paper, we study important aspects of such actions. We assign
to $\E$ a linear subspace $\DDD_{\E}$ of the vector fields on $V$. A
subvariety $X$ of $V$ is said to be $\DDD_{\E}$-invariant if
$\xi(x)\in T_{x}X$ for all $\xi \in\DDD_{\E}$. We show that $X$ is
$\DDD_{\E}$ -invariant if and only if it is the union of
$\E$-orbits. For such $X$, we define first integrals and show that
they are the rational functions on a certain ``quotient'' of $X$
defined by the action of $\E$.

An important case occurs when $G$ is an algebraic subgroup of
$\GL(V)$ and $\E$ consists of the $G$-equivariant polynomial
endomorphisms. In this case, the associated $\DDD_{\E}$ is the space
of $G$-invariant vector fields. A significant question here is
whether there are non-constant $G$-invariant first integrals on $X$.
As examples, we study the adjoint representation, \name{Luna}
strata, the orbit closures of highest weight vectors, and
representations of the additive group. We also look at
finite-dimensional irreducible representations of $\SL_{2}$ and
their nullcones.}}

\MSCclass{Primary: 14L30, 22E47; secondary: 13A50, 34C14}

\KeyWords{Covariants; endomorphisms; invariant subsets; vector
fields; first integrals; ind-varieties}

\TitleInFrench{Covariants, sous-ensembles invariants par
d\'erivation et int\'egrales premi\`eres}

\AbstractInFrench{\scriptsize{Soient $\bk$ un corps alg\'ebriquement clos de
caract\'eristique nulle et $V$ un espace vectoriel de dimension
finie. Soit $\End(V)$ le semi-groupe des endomorphismes polynomiaux
de $V$. Soit $\E\subset \End(V)$ un sous-espace lin\'eaire qui est
aussi un semi-groupe. Ainsi $\End(V)$ et $\E$ sont des
ind-vari\'et\'es qui agissent naturellement sur $V$.

Dans cet article, nous \'etudions des aspects importants de ces
actions. Nous associons \`a $\E$ un sous-espace lin\'eaire
$\DDD_{\E}$ form\'es de champs de vecteurs sur $V$. Une sous-vari\'et\'e $X$
de $V$ est dite $\DDD_{\E}$-invariante si $\xi(x)\in T_{x}X$ pour
tout $\xi \in\DDD_{\E}$. Nous montrons que $X$ est
$\DDD_{\E}$-invariante si et seulement si elle est r\'eunion de
$\E$-orbites. Pour une telle sous-vari\'et\'e $X$, nous
d\'efinissons des int\'egrales premi\`eres et montrons que ce sont
des fonctions rationnelles sur un certain ``quotient'' de $X$
d\'efini par l'action de $\E$.

Un cas particulier se pr\'esente lorsque $G$ est un sous-groupe alg\'ebrique de
$\GL(V)$ et $\E$ est form\'e par les endomorphismes polynomiaux
$G$-\'equivariants. Dans ce cas, $\DDD_{\E}$ est l'espace des champs
de vecteurs $G$-invariants. Ici, une question naturelle porte sur
l'existence d'int\'egrales premi\`eres $G$-invariantes non
constantes sur $X$. Comme exemples, nous \'etudions la
repr\'esentation adjointe, les strates de \name{Luna}, les
adh\'erences d'orbites de vecteurs de plus haut poids et les
repr\'esentations du groupe additif. Nous consid\'erons aussi les
repr\'esentations irr\'eductibles de dimension finie de $\SL_{2}$ et
leurs nilc\^ones.}}

%%%%%%%%%%%%%%%%%
% Thanks (if any)
%%%%%%%%%%%%%%%%%

%%%%%%%%%%%%%%%%%
% Dedication, contribution
%%%%%%%%%%%%%%%%%

%\contribution{With an appendix by...}

%\dedication{Dedicated to...}

\begin{document}

%%%%%%%%%%%%%%%%%%%%%%%%%%%%%%%
% Title page
%%%%%%%%%%%%%%%%%%%%%%%%%%%%%%%

\removeabove{0.5cm}
\removebetween{0.5cm}
\removebelow{0.5cm}

\maketitle

\begin{prelims}

\DisplayAbstractInEnglish

\medskip

\DisplayKeyWords

\medskip

\DisplayMSCclass

\bigskip

\languagesection{Fran\c{c}ais}

\medskip

\DisplayTitleInFrench

\DisplayAbstractInFrench

\end{prelims}
%\bigskip

%%%%%%%%%%%%%%%%%%%%%
% Table of Contents
%%%%%%%%%%%%%%%%%%%%%

%\newpage

\setcounter{tocdepth}{1}

\tableofcontents

%%%%%%%%%%%%%%%%%%%%%
% Content begins here
%%%%%%%%%%%%%%%%%%%%%

\section{Introduction and main results}
This paper is concerned with the relationship between three
concepts: derivation-invariant subsets, endomorphisms of an affine
variety $X$, and first integrals. We show that this relationship has
features similar to those of algebraic group actions with first
integrals playing the role of invariant functions. Let $\bk$ be an
algebraically closed field of characteristic $0$ and let $X$ be an
irreducible affine variety over $\bk$. Let $\DDD\subseteq \VF(X)$ be
a set of algebraic vector fields on $X$.  A closed subvariety
$Y\subseteq X$ is called $\DDD$-invariant if $\xi(y)\in T_{y}Y$ for
all $y\in Y$ and $\xi\in\DDD$, i.e., $\xi$ is tangent to $Y$ at
every point of $Y$. We establish some basic properties of invariant
subsets including the following: {\it For any $x\in X$, there is a
smallest $\DDD$-invariant closed subvariety, $M(x)$, which contains
$x$\/} (Lemma~\ref{D-invariant.lem}). A first integral of $\DDD$ is
a rational function $f\in\bk(X)$ such that $\xi f=0$ for all
$\xi\in\DDD$. We show that first integrals are precisely those
functions which are constant on the closed subsets $M(x)$
(Lemma~\ref{first-integral.lem}).

Our new idea is to consider the semigroup $\End(X)$ consisting of
all endomorphisms of the variety $X$ and to use the important fact
that $\End(X)$ is a so-called {\it ind-variety}. This allows us to
define the (Zariski) tangent space $T_{\id}\End(X)$ and to
associate to any $A\in T_{\id}\End(X)$ a vector field $\xi_{A}$ on
$X$ in the usual way, see Section~\ref{endo.subsec}. For a closed
subsemigroup $\E\subset \End(X)$ we denote by $\DDD_{\E}\subset
\VF(X)$ the set of associated vector fields. There is a natural
action of $\End(X)$ on $X$, $(\phi,x)\mapsto \phi(x)$, and the  {\it
$\E$-orbit\/} of an element $x\in X$ is defined as
$\E(x):=\{\phi(x)\mid \phi\in \E\}$. We first show that if a closed
subvariety $Y\subset X$ is the union of $\E$-orbits, then $Y$ is
$\DDD_{\E}$-invariant (Proposition~\ref{E&D-invariance.prop}).

Now suppose that $V$ is a finite-dimensional vector space and that
$X\subseteq V$. Suppose also that $\E \subseteq \End(X)$ is a {\it
linear subspace\/} which is also a subsemigroup. We show that for
$x\in X$, $\E(x)=M(x)$ and that a subvariety $Y\subseteq X$ is
$\DDD_{\E}$ -invariant if and only if it is a union of $\E$-orbits
(Theorem~\ref{main-theorem-1}). This means that the
$\DDD_{\E}$-invariant  subvarieties are precisely those which are
stable under the action of $\E$.

Furthermore, there is an open dense subset $X' \subseteq X$ so that
first integrals separate the various $\E(x)\cap X'$ and thus can be
regarded as the rational functions on a certain ``quotient space''
$X\quot \E$ for the action of $\E$ on $X$
(Proposition~\ref{quotient-mod-E.prop}). This construction includes
an algebraic (and global) version of a classical theorem of
\name{Frobenius}  \cite[Theorem 1.60]{Wa1971Foundations-of-dif}.

The most important example of the above occurs when $V$ is a
finite-dimensional vector space, $G \subseteq \GL(V)$ an algebraic
group,
$$
\End_{G}(V) := \{\phi\in \End(V)\mid \phi(g\cdot v) = g \cdot
\phi(v)\text{ for all } g\in G \text{ and } v\in V\},
$$
the semigroup of {\it covariants}, $X \subseteq V$ an
$\End_G(V)$-stable closed subvariety and $\E:=\End_G(V)|_X$.  When
$X$ is $G$-stable, an important question is whether or not there are
non-constant $G$-invariant first integrals on $X$. Examples show
that such can occur. However, in those cases where they do not, the
field of first integrals is the field of rational functions on a
homogeneous space (Lemma~\ref{first-integrals.lem}). Furthermore,
the $\E$-$G$-orbit of a generic point is open and dense in $X$.

When $G$ is reductive and the orbit $Gx$ is closed, \name{Panyushev}
has shown that  $\E(x)=X^{G_x}$ (see
Proposition~\ref{Panyushev.prop}). Thus, first integrals separate
open subsets of the various $X^{G_x}$. Furthermore, when the generic
$G$-orbit in $X$ is closed, we show that there are no $G$-invariant
first integrals (Theorem~\ref{first-integrals.thm}).

Finally, in Section~\ref{SLtwo.sec}, we study the case where
$G=\SLtwo$ and either $X=V_{d}$,  the binary forms of degree $d$, or
$X$ is the nullcone $\NNN_{d}$ of $V_{d}$. In the latter case, there
are no closed orbits other than $\{0\}$ and the main problem in
finding the $\E(v)$ is the construction of covariants.

%The background for some of the ideas in this paper comes from ordinary
%differential equations, see \cite{GrScWa2012Invariant-sets-for}. In that context, it can
%be shown that a Zariski-closed set $X$ is $\DDD_{\E}$-invariant if and
%only if it is the union of trajectories of solutions to $\frac{dx}{dt}=F(x)$,
%$F\in \E$. The quotient construction given in this paper leads to a
%description of all such sets. The study of the spaces $\E(v)$ began with the paper
%\cite{LeSp1999A-note-concerning-} by \name{Lehrer} and \name{Springer}
%which was subsequently extended by \name{Panyushev} in \cite{Pa2002On-covariants-of-r}. The
%difficult problem of constructing the module of covariants on $X$ was first
%considered in the classical invariant theory of the nineteenth century \cite{El1913An-Introduction-to}, but continues to be of interest today, see e.g.
%\cite{Do2008Covariants-and-the}. In this paper, we consider that problem in the context of finding
%the maximum dimension of the orbits $\E(v)$.

Algebraically, derivation-invariant ideals have long been of
interest \cite{Se1967Differential-ideal}. In the context of ordinary
differential equations, it is well known that a Zariski-closed set
$X\subset V$ is $\DDD_\E$-invariant if and only if it is the union
of trajectories of solutions to $\frac{dx}{dt}=\xi(x)$, $\xi\in\E$
(see Lemma 2.1 below). The study of the subsets $\E(v)$ began with
the paper \cite{LeSp1999A-note-concerning-} by
\name{Lehrer-Springer} which was subsequently extended by
\name{Panyushev} \cite{Pa2002On-covariants-of-r}. These papers,
however, did not draw the connection to derivation-invariant
subsets. For vector spaces, that connection appears in \cite[Theorem
3.6]{GrScWa2012Invariant-sets-for}. The difficult problem of
constructing the module of covariants on $X$ was first considered in
the classical invariant theory of the nineteenth century
\cite{El1913An-Introduction-to} and continues to be of active
research interest. It is worth noting that when $G$ is reductive and
$X\subset V$ is an $\End_{G}(V)$- and $G$-stable variety, then
$\max\{\dim\E(v)\mid v\in X\}$, which is calculated in many of our
examples, is (easily) shown to be the rank of the module of
covariants from $X$ to $X$.

\ms
\begin{thanks*}
The first author thanks Sebastian Walcher for his advice on an earlier version of this paper.
\end{thanks*}

%%%%%%%%%%%%%%%%%%%%%%%%%%%%%%%%%%%%%%%%%
\section{Basic material}
%%%%%%%%%%%%%%%%%%%%%%%%%%%%%%%%%%%%%%%%%
\subsection{Vector fields and $\DDD$-invariant subsets}
Our base field $\bk$ is algebraically closed of characteristic zero.
We start with a lemma which translates the concept of invariant
subsets with respect to an ordinary differential equation into the
algebraic setting. For an affine variety $X$ an {\it algebraic
vector field} $\xi=(\xi(x))_{x\in X}$ is a collection of tangent
vectors $\xi(x)\in T_{x}X$ such that, for every regular function $f
\in \OOO(X)$, the function $\xi f\colon x \mapsto \xi(x)f$ is again
regular. It is easy to see that this is the same as a derivation of
the coordinate ring $\OOO(X)$. Note that $\xi f$ is also defined for
rational functions $f$.

In addition, one can define the  the {\it tangent bundle\/} $TX$ of
$X$ which is a variety together with a projection $p\colon TX \to X$
such that the fibers $p^{-1}(x)$ are the Zariski tangent spaces
$T_{x}X$. Then the sections are the algebraic vector fields (see
e.g. \cite[Appendix A.4.5]{Kr2016Algebraic-Transfor}). It is clear
that the algebraic vector fields form a $\OOO(X)$-module which will
be denoted by $\VF(X)$ and which can be identified with the
$\OOO(X)$-module $\Der(\OOO(X))$ of derivations of $\OOO(X)$. The
next lemma seems to be well known; a somewhat different proof may be
found in \cite[Lemma~A.1]{SaScWa2015Motion-in-a-symmet}.

\begin{lem}
Let $X$ be a smooth complex variety, and let $\xi \in \VF(X)$ be an
algebraic vector field. Then a Zariski-closed subvariety $Y \subset
X$ is invariant with respect to the flow defined by the differential
equation $\dot x = \xi(x)$ if and only if $\xi(y) \in T_{y}Y$ for
all $y \in Y$.
\end{lem}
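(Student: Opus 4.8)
The plan is to reduce the statement to the classical existence-and-uniqueness theorem for analytic (or smooth) ODEs, applied pointwise along the flow, and then to pass between the analytic notion of invariance and the algebraic tangency condition using the fact that the tangent spaces in both senses agree on a smooth variety. For the ``only if'' direction, suppose $Y$ is invariant under the flow of $\dot x = \xi(x)$. Fix $y \in Y$. Since $\xi$ is an algebraic vector field on the smooth variety $X$, it is in particular analytic near $y$, so there is a local analytic integral curve $\gamma \colon (-\epsilon, \epsilon) \to X$ with $\gamma(0) = y$ and $\gamma'(t) = \xi(\gamma(t))$. Flow-invariance of $Y$ means $\gamma(t) \in Y$ for all $t$, and since $Y$ is Zariski-closed (hence an analytic subvariety) and smooth at $y$ on a dense open set — but here we only need that $\gamma$ maps into $Y$ — differentiating at $t = 0$ gives $\xi(y) = \gamma'(0) \in T_y Y$. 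One must be slightly careful when $y$ is a singular point of $Y$: there $T_y Y$ is the Zariski tangent space, which contains the tangent cone, and the derivative of any curve in $Y$ through $y$ lands in the Zariski tangent space, so the conclusion still holds. This gives $\xi(y) \in T_y Y$ for every $y \in Y$.

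For the ``if'' direction, assume $\xi(y) \in T_y Y$ for all $y \in Y$; we want flow-invariance. The subtlety is that $Y$ may be singular, so $\xi$ need not restrict to a vector field on $Y$ in the naive sense. The clean approach is to work on the smooth locus $Y^{\mathrm{sm}}$, which is a smooth (analytic) submanifold of $X$, open and dense in $Y$. On $Y^{\mathrm{sm}}$ the condition $\xi(y) \in T_y Y = T_y Y^{\mathrm{sm}}$ says exactly that $\xi$ is tangent to the submanifold $Y^{\mathrm{sm}}$, so by the standard theory $\xi|_{Y^{\mathrm{sm}}}$ is an analytic vector field on $Y^{\mathrm{sm}}$, and its integral curves are integral curves of $\xi$ in $X$. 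Thus any integral curve of $\xi$ in $X$ starting at a point of $Y^{\mathrm{sm}}$ stays in $Y^{\mathrm{sm}} \subset Y$ for as long as it is defined. For a starting point $y$ in the singular locus of $Y$, I would use a limiting argument: take a sequence $y_n \in Y^{\mathrm{sm}}$ with $y_n \to y$, note that the integral curves $\gamma_n$ through $y_n$ stay in $Y$, and invoke continuous dependence of solutions on initial conditions (valid since $\xi$ is analytic on all of $X$) together with the closedness of $Y$ to conclude that the integral curve through $y$ also stays in $Y$.

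The main obstacle, and the place where I would spend the most care, is the behavior at singular points of $Y$. In both directions the smooth case is essentially immediate from the inverse/implicit function theorem plus Picard–Lindelöf; everything nontrivial is about making sure that the Zariski tangent space (which can jump up in dimension at singularities) plays well with the flow. The ``only if'' direction at singular points is handled by the general fact that derivatives of curves lie in the Zariski tangent space; the ``if'' direction at singular points needs the density of $Y^{\mathrm{sm}}$ in $Y$ and a continuity/closedness argument. A possible shortcut worth noting: if one is willing to invoke it, a theorem of Bochner–Montgomery type or the algebraic fact that a vector field tangent to a closed subvariety restricts to a derivation of its coordinate ring would let one bypass the smooth-locus maneuver, but the hypothesis here explicitly says ``smooth complex variety'' $X$ (not $Y$), so the honest route is through $Y^{\mathrm{sm}}$.
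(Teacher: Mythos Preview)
Your proposal is correct and follows essentially the same route as the paper: differentiate the flow for the ``only if'' direction, and for the ``if'' direction restrict $\xi$ to the smooth locus $Y^{\mathrm{sm}}$, invoke uniqueness of integral curves to see that $Y^{\mathrm{sm}}$ is flow-invariant, then pass to all of $Y$ by density and closedness. The only cosmetic difference is that the paper phrases the last step as ``$Y=\overline{Y^{\mathrm{sm}}}$ is invariant since $Y^{\mathrm{sm}}$ is'' (continuity of the local flow $\Phi$ plus closedness of $Y$), whereas you spell it out as a sequence argument with continuous dependence on initial conditions---these are the same thing.
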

\begin{proof}
Let $\Phi\colon X \times \RR \to X$ be the local flow of $\xi$,
defined in an open neighborhood of $X \times \{0\}$. By definition,
$$
\frac{\partial }{\partial t} \Phi(x,t)|_{t=0} = \xi(x) \text{ for
all } x \in X.
$$
This implies that if $Y$ is invariant under $\Phi$, then $\xi(y) \in
T_{y}Y$ for all $y \in Y$. On the other hand, assume that $\xi(y)
\in T_{y}Y$ for all $y \in Y$, and denote by $Y' \subset Y$ the open
dense set of smooth points of $Y$. Then $\xi|_{Y'}$ defines a local
flow $\Phi_{Y'}\colon Y' \times \RR \to Y'$ such that
$\frac{\partial }{\partial t} \Phi_{Y'}(y',t)|_{t=0} = \xi(y')$ for
all  $y \in Y'$. By the uniqueness of the local flow, we have
$\Phi_{Y'} = \Phi|_{Y' \times \RR}$, and so $Y'$ is invariant under
$\Phi$. Since $Y = \overline{Y'}$ we see that $Y$ is also invariant
under $\Phi$.
\end{proof}
This lemma allows to define the invariance of subvarieties with
respect to a set of vector fields for an arbitrary $\bk$-variety
$X$.
\begin{defn}
Let $\DDD \subset \VF(X)$ be a set of vector fields. \be
\item
A closed subvariety $Y \subset X$ is called {\it $\DDD$-invariant\/}
if $\xi(y)\in T_{y}Y$ for all $y\in Y$ and all $\xi \in \DDD$. We
also say that the vector fields $\xi \in \DDD$ are {\it parallel to
$Y$}.
\item
A subspace $W \subset \OOO(X)$ is called {\it $\DDD$-invariant} if
$\xi(W) \subset W$ for all $\xi \in \DDD$. \ee
\end{defn}
\begin{rem}
We will constantly use the following easy fact. If $\xi$ is a vector
field parallel to $Y$ and $f$  a rational function on $X$ defined in
a neighborhood $U$ of $y \in Y$, then $\xi(y) f = \xi(y) (f|_{U\cap
Y})$. In particular, if $f$ is regular on $X$, then $(\xi
f)|_{Y}=\xi|_{Y} (f|_{Y})$.
\end{rem}

\ps
%%%%%%%%%%%%%%%%%%%%%%%%%%%%%%%%%%%%%%%%%
\subsection{$\DDD$-invariant ideals}
Let $\DDD \subset \VF(X)$ be a set of vector fields.
\begin{lem}
If $I(Y) \subset \OOO(X)$ denotes the vanishing ideal of $Y$, then
$Y$ is $\DDD$-invariant if and only if $I(Y)$ is $\DDD$-invariant.
\end{lem}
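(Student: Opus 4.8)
The plan is to reduce to a single vector field, since both sides of the equivalence are conditions quantified over all $\xi \in \DDD$. So fix $\xi \in \DDD$; it suffices to show that $\xi$ is parallel to $Y$ if and only if $\xi(I(Y)) \subset I(Y)$.

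The one ingredient I would invoke is the standard description of the tangent space of a closed subvariety inside the ambient space: for $y \in Y$, under the natural inclusion $T_{y}Y \subset T_{y}X$ one has
\[
T_{y}Y = \{v \in T_{y}X \mid v(f) = 0 \text{ for all } f \in I(Y)\}.
\]
(This comes from dualizing the surjection $\mm_{X,y}/\mm_{X,y}^{2} \onto \mm_{Y,y}/\mm_{Y,y}^{2}$, whose kernel is the image of $I(Y)$.) I would state this as a recalled fact rather than prove it. I would also use that, $X$ and $Y$ being (reduced) varieties, $I(Y)$ is radical, so a function $h \in \OOO(X)$ lies in $I(Y)$ precisely when $h(y) = 0$ for every $y \in Y$.

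With these in hand both implications are one line each. If $I(Y)$ is $\DDD$-invariant, then for $y \in Y$ and $f \in I(Y)$ we have $\xi f \in I(Y)$, hence $\xi(y)f = (\xi f)(y) = 0$; as this holds for all $f \in I(Y)$, the tangent space description gives $\xi(y) \in T_{y}Y$, so $\xi$ is parallel to $Y$. Conversely, if $\xi$ is parallel to $Y$, then for any $f \in I(Y)$ and any $y \in Y$ we get $(\xi f)(y) = \xi(y) f = 0$ because $\xi(y) \in T_{y}Y$ and $f \in I(Y)$; since $\xi f$ vanishes at every point of $Y$ and $I(Y)$ is radical, $\xi f \in I(Y)$, i.e., $\xi(I(Y)) \subset I(Y)$.

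There is essentially no serious obstacle here; the only point requiring a little care is making sure the pointwise criterion "$h \in I(Y) \iff h|_{Y} = 0$" is available, which is exactly reducedness of $Y$, already implicit in the standing convention that $Y$ is a subvariety. I would phrase the final write-up around the displayed tangent-space identity and then give the two short implications.
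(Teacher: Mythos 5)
Your argument is correct and is essentially the paper's proof: both directions rest on the pointwise identity $(\xi f)(y)=\xi(y)f$ together with the identification of $T_{y}Y$ inside $T_{y}X$ as the tangent vectors annihilating $I(Y)$ (the paper phrases the converse as ``$\xi$ induces a derivation of $\OOO(X)/I(Y)=\OOO(Y)$,'' which is the same fact in ring-theoretic clothing). Note that the criterion $h\in I(Y)\iff h|_{Y}=0$ is simply the definition of the vanishing ideal, so no separate appeal to reducedness is needed.
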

\begin{proof}
If $f \in I(Y)$, then, for $y \in Y$,  $(\xi f)(y) = \xi(y) f =
\xi(y) f|_{Y}= 0$, hence $\xi f \in I(Y)$. Conversely, if $\xi(I(Y))
\subset I(Y)$, then $\xi$ induces a derivation of $\OOO(X)/I(Y) =
\OOO(Y)$, and the claim follows.
\end{proof}

For a closed subvariety $Y \subset X$ we can define the  Lie
subalgebra of the vector fields on $X$ parallel to $Y$:
$$
\VF_{Y}(X):=\{\xi \in \VF(X) \mid \xi(y) \in T_{y}Y \text{ for all
}y \in Y\} \subset \VF(X).
$$
We have a homomorphism of Lie algebras
$$
\rho\colon \VF_{Y}(X) \to \VF(Y), \quad\xi\mapsto \xi|_{Y},
$$
whose kernel consists of the vector fields on $X$ vanishing on $Y$.
The homomorphism $\rho$ is surjective when $X$ is a vector space.
With this notation we see that  $Y$ is $\DDD$-invariant if and only
if  $\DDD \subset \VF_{Y}(X)$. Part (3) of the next lemma can be
found in \cite[Theorem~1]{Se1967Differential-ideal}.

\begin{lem}{\label{D-invariant.lem}}
\leavevmode
\begin{enumerate}
\item Sums and intersections of $\DDD$-invariant ideals are $\DDD$-invariant.
\item If $I \subset \OOO(X)$ is a $\DDD$-invariant ideal, then so is $\sqrt{I}$.
\item If $Y_{i}\subset X$, $i\in I$, are $\DDD$-invariant closed subvarieties, then so is $\bigcap_{i\in I}Y_{i}$.
\item For any $x \in X$ there is a uniquely defined minimal $\DDD$-invariant closed subvariety $M(x)\subseteq X$ containing $x$.
\item If the closed subvariety $Y \subset X$ is $\DDD$-invariant, then every irreducible component of $Y$ is $\DDD$-invariant.
\end{enumerate}
\end{lem}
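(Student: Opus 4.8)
The plan is to prove the five parts more or less in the order stated, since each relies on the previous ones. For (1), I would work with the characterization that an ideal $I$ is $\DDD$-invariant iff $\xi(I)\subset I$ for all $\xi\in\DDD$; then if $I,J$ are $\DDD$-invariant, the Leibniz rule (or rather just linearity of each $\xi$ as a derivation) gives $\xi(I+J)\subset \xi(I)+\xi(J)\subset I+J$, and $\xi(I\cap J)\subset \xi(I)\cap\xi(J)\subset I\cap J$. Nothing subtle here.

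The main work is part (2): if $I$ is $\DDD$-invariant, so is $\sqrt I$. The standard argument is that each derivation $\xi$ of a ring of characteristic $0$ satisfies $\xi(f^{n}) = n f^{n-1}\xi(f)$, so if $f\in\sqrt I$ with $f^{n}\in I$, then differentiating shows $n f^{n-1}\xi(f)\in I$, hence $f^{n-1}\xi(f)\in I$; iterating (or using that $\OOO(X)$ is Noetherian together with a primary-decomposition or associated-primes argument) one concludes $\xi(f)\in\sqrt I$. I expect this to be the place where the most care is needed — in the general (possibly non-reduced, non-Noetherian?) setting one should make sure the induction terminates; the cleanest route is to pass to $\OOO(X)/I$ and reduce to the statement that the nilradical of a $\bk$-algebra is stable under every derivation, which follows from $\xi(f)^{2n-1}$ being a multiple of $\xi(f^{n})\cdot(\text{stuff})$ — I'd cite or reproduce the one-line argument that a derivation kills no new information beyond $\sqrt I$. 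Since $X$ is a variety, $\OOO(X)$ is Noetherian and reduced, which makes this routine, but it is still the technical heart.

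Part (3) is immediate from (1) and the dictionary between closed subvarieties and radical ideals (the earlier Lemma on $I(Y)$): $I(\bigcap_i Y_i) = \sqrt{\sum_i I(Y_i)}$, which is $\DDD$-invariant by (1) and (2), so $\bigcap_i Y_i$ is $\DDD$-invariant. For part (4), I would simply set $M(x):=\bigcap Y$, the intersection running over all $\DDD$-invariant closed subvarieties $Y$ containing $x$ (the whole space $X$ is one such, so the family is nonempty); by (3) this intersection is again $\DDD$-invariant and closed, it contains $x$, and by construction it is contained in every $\DDD$-invariant closed subvariety through $x$, hence minimal and unique. Finally, for (5), write $Y = Y_1\cup\cdots\cup Y_r$ as its irreducible components; I'd argue componentwise that each $Y_j$ is $\DDD$-invariant either via ideals — $I(Y_j)$ is a minimal prime over $I(Y)$, and minimal primes over a $\DDD$-invariant ideal are $\DDD$-invariant (this again uses characteristic $0$: for $\xi\in\DDD$ and $f\in I(Y_j)$, pick $g\in\bigcap_{k\ne j}I(Y_k)\setminus I(Y_j)$; then $gf$ vanishes on all of $Y$ in high enough power, so $\xi(gf)\in I(Y_j)$, expand by Leibniz, and use that $Y_j$ is irreducible to cancel $g$) — or, more geometrically, by noting that the smooth locus of $Y_j$ away from the other components is open in the smooth locus of $Y$, on which parallelism is a local condition, and then taking closures as in the first Lemma. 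I'd present the ideal-theoretic version since it matches the rest of the subsection. The cancellation step using irreducibility (an integral domain has no zero divisors) together with the characteristic-zero computation in (2) is where the real content sits; everything else is formal.
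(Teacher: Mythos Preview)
Your proposal is correct and follows essentially the same route as the paper: (1), (3), (4) are identical, (2) is the standard ``differentiate and lower the exponent'' argument (the paper packages it as a minimality argument on the exponent $e_0$ with $f^{e_0}(\xi f)^q=0$, which is exactly your iteration carried out in $\OOO(X)/I$), and (5) is the same auxiliary-element trick with $g\in\bigcap_{k\ne j}I(Y_k)\setminus I(Y_j)$ and the Leibniz rule. Two small cleanups: in (5) the product $gf$ already lies in $I(Y)$ (no ``high enough power'' needed), and your phrase ``$\xi(f)^{2n-1}$ being a multiple of $\xi(f^n)$'' is not literally true---what you want is the inductive statement $f^{\,n-k}(\xi f)^{2k-1}\in I$, obtained by applying $\xi$ and multiplying by $\xi f$ at each step, which is precisely the paper's computation.
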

\begin{proof}
(1) is clear, (3) follows from (1) and (2), and (4) follows from
(3). \ps 
(2) It suffices to show that if $f^{n}=0$, then $(\xi
f)^{m}=0$ for some $m>0$. Let $e_{0}\geq 0$ be the minimal $e$ such
that there exists a $q\geq 0$ with $f^{e}\cdot(\xi f)^{q}=0$. If
$e_{0}=0$, we are done. So assume that $e_{0}>0$. Then
$$
0=\xi(f^{e_{0}}\cdot(\xi f)^{q})\cdot\xi f =
e_{0}f^{e_{0}-1}\cdot(\xi f)^{q+1} + q f^{e_{0}}\cdot(\xi
f)^{q}\cdot \xi^{2}f = e_{0}f^{e_{0}-1}\cdot(\xi f)^{q+1},
$$
contradicting the minimality of $e_{0}$. \ps 
(5) It suffices to
consider the case where $Y=X$, hence $(0) = \pp_{1}\cap\ldots\cap
\pp_{k}$ where the $\pp_{i}$ are the minimal primes of $\OOO(X)$.
For every $i$ choose an element $p_{i}\in\bigcap_{j\neq
i}\pp_{j}\setminus\pp_{i}$. Then $\pp_{i}=\{p\in\OOO(X)\mid p_{i} p
= 0\}$, and the same holds for every power of $p_{i}$. For every $p
\in \pp_{i}$ we find
$$
0 = p_{i} \xi(p_{i} p) = p_{i}(p_{i} \xi p + p\xi p_{i}) = p_{i}^{2}
\xi p,
$$
hence $\xi p \in \pp_{i}$.
\end{proof}

\begin{defn}
The closed subvarieties $M(x) \subset X$ from
Lemma~\ref{D-invariant.lem}(4) are called {\it minimal
$\DDD$-invariant subvarieties}. By Lemma~\ref{D-invariant.lem}(5)
they are irreducible.
\end{defn}

\ps
%%%%%%%%%%%%%%%%%%%%%%%%%%%%%%%%%%%%
\subsection{Linear spaces of vector fields}
In the following, we will mainly deal with the case where $\DDD
\subset \VF(X)$ is a linear subspace. In this case, we set
$$
\DDD(x):= \eps_{x}(\DDD):=\{\xi(x) \mid \xi \in \DDD\} \subset
T_{x}X
$$
where  $\eps_{x}\colon \VF(X) \to T_{x}X$ is the (linear) evaluation
map $\xi\mapsto \xi(x)$. Note that a closed subvariety $Y \subset X$
is $\DDD$-invariant if and only if $\DDD(y) \subset T_{y}Y$ for all
$y \in Y$.

The following lemma is clear.
\begin{lem}\label{lower-semi-cont.lem}
For a linear subspace $\DDD \subset \VF(X)$ the function $x \mapsto
\dim\DDD(x)$ is lower semicontinuous, i.e., for every $x\in X$ the
set
$$
U_{x}:=\{u \in X\mid \dim\DDD(u)\geq \dim\DDD(x)\}
$$
is a (Zariski-) open neighborhood of $x$.
\end{lem}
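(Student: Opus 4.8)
The plan is to realize $x \mapsto \dim \DDD(x)$ as a rank function and invoke the standard lower semicontinuity of matrix rank. First I would reduce to the finitely generated case: if $\dim \DDD(x) = r$, pick vector fields $\xi_1,\dots,\xi_r \in \DDD$ whose values $\xi_1(x),\dots,\xi_r(x)$ are linearly independent in $T_x X$, and let $\DDD_0 := \bk\xi_1 + \dots + \bk\xi_r \subset \DDD$. Then for every $u \in X$ we have $\dim \DDD_0(u) \le \dim \DDD(u)$, while $\dim \DDD_0(x) = r = \dim \DDD(x)$. So it suffices to show that $\{u \mid \dim \DDD_0(u) \ge r\}$ is open, since this set is then contained in $U_x$ and contains $x$.

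Next I would make the rank picture explicit. Work locally: choose an affine open $W \subset X$ on which the tangent bundle is trivial, i.e.\ there is an isomorphism $TW \simeq W \times \bk^n$ with $n = \dim X$ (such $W$ exists around any smooth point; if $X$ is not assumed smooth one instead embeds $W$ in affine space and uses a surjection from a trivial bundle, or argues via generators of the module $\Der(\OOO(W))$ — the key point is only that $\xi_j|_W$ is given by finitely many regular functions). Under this trivialization each $\xi_j|_W$ becomes a regular map $W \to \bk^n$, and the matrix $M(u)$ whose rows are $\xi_1(u),\dots,\xi_r(u)$ has entries in $\OOO(W)$. Now $\dim \DDD_0(u) = \operatorname{rank} M(u)$, and $\{u \in W \mid \operatorname{rank} M(u) \ge r\}$ is the complement of the common zero locus of all $r \times r$ minors of $M(u)$, hence Zariski-open in $W$. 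Since $x$ lies in this locus, we get the desired open neighborhood of $x$ inside $W$, and intersecting appropriately (or covering $X$ by such $W$) yields an open neighborhood of $x$ in $X$.

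Finally, one should note the monotonicity $\dim \DDD_0(u) \le \dim \DDD(u)$ used above is immediate from $\DDD_0 \subset \DDD$ and linearity of $\eps_u$, so the open set just constructed is contained in $U_x$; and $x \in U_x$ trivially. Writing $U_x$ as a union over $x$ of these neighborhoods, or simply observing the definition is already that of a union of opens, completes the argument. The only genuine subtlety is handling the tangent bundle when $X$ is merely an affine variety rather than smooth: there one replaces "trivialize $TX$" by "the $\OOO(W)$-module $\Der(\OOO(W))$ is finitely generated, say by $\eta_1,\dots,\eta_m$, write $\xi_j = \sum_k a_{jk}\eta_k$ with $a_{jk} \in \OOO(W)$, and bound $\dim \DDD_0(u)$ by the rank of the matrix $(a_{jk}(u))$ composed with the evaluation $\eps_u$" — but since in all applications of this lemma in the paper $X$ is a vector space, the trivial-bundle argument suffices and I would present that version.
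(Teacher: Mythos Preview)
The paper gives no proof of this lemma at all: it simply introduces it with ``The following lemma is clear.'' Your proposal is a correct and standard justification of why it is clear. The reduction to a finite-dimensional $\DDD_{0}\subset\DDD$ is the right move, and the rank argument is exactly what one expects. One minor simplification you could make: since $X$ is affine, fix once and for all a closed embedding $X\hookrightarrow \bk^{N}$; then $T_{u}X\subset\bk^{N}$ for every $u$, each $\xi_{j}$ is a morphism $X\to\bk^{N}$, and the entries of your matrix $M(u)$ are global regular functions on $X$. This avoids the local trivialization issue and the detour through generators of $\Der(\OOO(W))$ for singular $X$, and works uniformly.
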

Setting $d_{\DDD}(X):=\max_{x\in X}\dim \DDD(x)$ the lemma implies
that
$$
X':=\{x\in X \mid \dim\DDD(x)=d_{\DDD}(X)\}
$$
is open (and non-empty) in $X$.

\ms
%%%%%%%%%%%%%%%%%%%%%%%%%%%%%%%
\section{Endomorphisms}
%%%%%%%%%%%%%%%%%%%%%%%%%%%%%%%
\subsection{The semigroup of endomorphisms}\label{endo.subsec}
We now study the semigroup $\End(X)$ of endomorphisms  of $X$. An
important fact is that $\End(X)$ is an {\it ind-variety} (see
Appendix) which allows to define the (Zariski) tangent space
$T_{\id}\End(X)$. We have a canonical inclusion
$$
\Xi\colon T_{\id}\End(X) \into \VF(X), \  A \mapsto \xi_{A},
$$
where the vector field $\xi_{A}$ is defined in the following way
(see Appendix, Proposition~\ref{vector-fields.prop}). For any $x \in
X$ consider the ``orbit map'' $\mu_{x}\colon \End(X) \to X$,
$\phi\mapsto \phi(x)$, and its differential
$$
(d\mu_{x})_{\id}\colon T_{\id}\End(X) \to T_{x}X.
$$
Then define $\xi_{A}(x):=(d\mu_{x})_{\id}(A)$.

The canonical ``evaluation map'' $(\phi,x) \mapsto \phi(x)$ defines
a morphism of ind-varieties
$$
\Phi\colon \End(X) \times X \to X, \quad(\phi,x)\mapsto \phi(x)
$$
with the usual properties:
$$
\Phi(\id,x) = x \text{ \ and \ }  \Phi(\phi\circ\psi,x) =
\Phi(\phi,\Phi(\psi,x))
$$
for  all $x\in X$ and $ \phi,\psi \in \End(X)$. We will call this an
{\it action of the semigroup $\End(X)$ on $X$,} although there are
some major differences to group actions as we will see below.

For the differential of $\Phi$ we find
\[\tag{$*$}
d\Phi_{(\id,x_{0})}\colon T_{\id}\End(X) \oplus T_{x_{0}}X \to
T_{x_{0}}X, \quad (A,\delta)\mapsto \xi_{A}(x_{0})+\delta.
\]
\begin{defn}
If $\E \subset \End(X)$ is a closed subsemigroup we say that a
subset $Y \subset X$ is {\it stable under $\E$} (shortly
$\E$-stable), if $\phi(y) \in Y$ for all $y\in Y$ and all
$\phi\in\E$.

\begin{rem}\label{orbit.rem}
The stability under $\E$ can be expressed differently by using the
subsets
$$
\E(y):=\{\phi(y) \mid \phi\in \E\}
$$
which will be called {\it orbits of $y$ under $\E$}. Namely, {\it
$X$ is stable under $\E$ if and only if $X$ contains with every $y$
the orbit $\E(y)$.} However, one has to be very careful in using
this analogy with group actions since orbits under $\E$ are not
necessarily disjoint and they do not define a partition of $X$.
\end{rem}

\end{defn}
The closed subsemigroup $\E \subset \End(X)$ defines a linear
subspace $\DDD_{\E}\subset\VF(X)$ as the image of the tangent space
$T_{\id}\E$ under $\Xi$:
$$
\DDD_{\E}:= \{\xi_{A}\mid A \in T_{\id}\E\} \subset \VF(X).
$$
The main point of this section is to relate the invariance under
$\DDD_{\E}$ with the stability under the semigroup $\E$. A first and
easy result is the following.

\begin{prop}\label{E&D-invariance.prop}
Let $\E \subseteq \End(X)$ be a closed subsemigroup. If $Y \subset
X$ is a closed $\E$-stable subvariety, then $Y$ is
$\DDD_{\E}$-invariant.
\end{prop}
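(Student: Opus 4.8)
The plan is to show that for every $y \in Y$ and every $A \in T_{\id}\E$, the tangent vector $\xi_A(y)$ lies in $T_y Y$, since this is precisely the definition of $Y$ being $\DDD_\E$-invariant. The key observation is that the ``orbit map'' $\mu_y \colon \End(X) \to X$, $\phi \mapsto \phi(y)$, when restricted to $\E$, lands inside $Y$ because $Y$ is $\E$-stable: $\mu_y(\E) = \E(y) \subseteq Y$. So I would first consider the restricted morphism $\mu_y^{\E}\colon \E \to Y$, which exists as a morphism of ind-varieties (resp.\ varieties) because $Y$ is closed in $X$ and $\mu_y(\E) \subseteq Y$.

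Next I would take differentials at $\id \in \E$. The inclusion $Y \into X$ induces $T_y Y \into T_y X$, and we get a commutative triangle: $(d\mu_y)_{\id}\colon T_{\id}\E \to T_y X$ factors as $T_{\id}\E \xrightarrow{(d\mu_y^\E)_{\id}} T_y Y \into T_y X$. Therefore the image of $(d\mu_y)_{\id}$ restricted to $T_{\id}\E$ is contained in $T_y Y$. By the definition of $\xi_A$, namely $\xi_A(y) = (d\mu_y)_{\id}(A)$ (using that $A \in T_{\id}\E \subset T_{\id}\End(X)$ and the compatibility of the orbit maps for $\E$ and $\End(X)$), we conclude $\xi_A(y) \in T_y Y$ for all $A \in T_{\id}\E$. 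Since $\DDD_\E(y) = \{\xi_A(y) \mid A \in T_{\id}\E\}$, this gives $\DDD_\E(y) \subseteq T_y Y$ for all $y \in Y$, which is exactly $\DDD_\E$-invariance.

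The only delicate point — and the one I would treat carefully — is the functoriality of the construction with respect to the inclusion $\E \into \End(X)$ and the restriction $Y \into X$: one must check that the tangent-space inclusion $T_{\id}\E \into T_{\id}\End(X)$ is compatible with the two orbit maps $\mu_y^\E$ and $\mu_y$, so that $\xi_A(y)$ computed via $\End(X)$ agrees with the image of $A$ under $(d\mu_y^\E)_{\id}$ followed by $T_y Y \into T_y X$. This is formally the statement that $\mu_y = \mu_y^\E \circ (\text{inclusion})$ as morphisms, and differentials respect composition; the ind-variety framework of the Appendix (and Proposition~\ref{vector-fields.prop}) guarantees that differentials of morphisms of ind-varieties behave as expected, so this is routine once stated. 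No reductivity, smoothness of $Y$, or linearity of $\E$ is needed — this is why the authors call it ``a first and easy result.''
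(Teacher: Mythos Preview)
Your argument is correct and essentially identical to the paper's: the paper uses the action morphism $\Phi\colon \E \times Y \to Y$ and formula $(*)$ for its differential, which sends $(A,0)$ to $\xi_A(y)$, while you use the orbit map $\mu_y^{\E}\colon \E \to Y$, which is just $\Phi|_{\E\times\{y\}}$. The two differ only in packaging, and your remarks about functoriality of differentials are exactly what the paper implicitly uses when invoking $(*)$.
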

\begin{proof}
Since $Y$ is $\E$-invariant we have a morphism $\Phi\colon \E \times
Y \to Y$ whose differential
$$
d\Phi_{(\id,y)}\colon T_{\id} \E \oplus T_{y}Y \to T_{y}Y
$$
sends $(A,0)$ to $\xi_{A}(y)$, by  formula $(*)$ above. Thus $\xi(y)
\in T_{y}Y$ for all $\xi \in \DDD_{\E}$ which means that $Y$ is
$\DDD_{\E}$-invariant.
\end{proof}
We will see below that under stronger assumptions on $\E$ the
reverse implication also holds, i.e., a closed subset $Y \subset X$
is $\E$-stable if and only if it is $\DDD_{\E}$-invariant.
\begin{rem}
We do not know what the structure of the subsets $\E(x) \subset X$
is. If $\E$ is curve-connected (i.e. any two points of $\E$ can be
connected by an irreducible curve, see
Definition~\ref{affine-algebraic.def}(5)), then one can show that
$\E(x)$ contains a set $U$ which is open and dense in
$\overline{\E(x)}$. But it is not clear whether $\E(x)$ is
constructible.
\end{rem}

\ps
%%%%%%%%%%%%%%%%%%%%%%%%%%%%%%%%
\subsection{The case of a vector space}
In case of a vector space $X = V$ the situation is much simpler,
because we can identify every tangent space $T_{v}V$ with $V$. In
particular,  vector fields $\xi\in\VF(V)$ correspond to morphisms
$\xi\colon V\to V$. Choosing a basis of $V$ we have
$$
\xi = \sum_{i=1}^{n}p_{i}\di \text{ where }p_{i} := \xi x_{i}.
$$
In this situation, the semigroup $\End(V) = \OOO(V) \otimes V$ is a
vector space, hence $T_{\id}\End(V) = \End(V)$ in a canonical way,
and
$$
\Xi\colon \End(V) = T_{\id}\End(V) \simto \VF(V)
$$
is the obvious isomorphism given as follows. In terms of coordinates
an endomorphism $\phi$ has the form $\phi=(p_{1},\ldots,p_{n})\colon
\bk^{n} \to \bk^{n}$ where $p_{i}=\phi^{*}(x_{i})$, and the
corresponding vector field $\xi:=\Xi(\phi)$ is given by  $\xi=
\sum_{i=1}^{n}p_{i}\di$.

The same formula holds for a semigroup  $\E \subset \End(V)$ which
is a  {\it linear} subspace. However, for a general closed semigroup
$\E\subset\End(V)$, we cannot identify $\E$ with $T_{\id}\E$, and so
the formula above does not make sense. For example, if
$\phi\in\End(V)$ is any endomorphism, then the semigroup
$\E:=\{\id,\phi,\phi^{2},\ldots\}$ is discrete, hence $T_{\id}\E$ is
trivial, and so $\DDD_{\E}$ is also trivial.
\par\smallskip
The following result is crucial. We will identify $T_{v}V$ with $V$
and thus consider the subspace $\DDD(v) \in T_{v}V$ as a subspace of
$V$.

\begin{lem}\label{main-lemma}
Let $\E \subset \End(V)$ be a linear subspace which is a semigroup.
Then
$$
\E(v) = M(v) = \DDD_{\E}(v) \ \text{ for all } v \in V.
$$
In particular, a subset $Y \subset V$ is $\DDD_{\E}$-invariant if
and only if it is $\E$-stable. Moreover, we have $\E(w) = \E(v)$ for
all $w$ in an open neighborhood of $v$ in $\E(v)$.
\end{lem}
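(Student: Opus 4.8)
The plan is to prove the chain of equalities $\E(v) = M(v) = \DDD_{\E}(v)$ by establishing three inclusions (or rather, a cycle of them) and then deducing the ``in particular'' statements. First I would observe that $\E(v)$ is the image of the morphism $\mu_v\colon \E \to V$, $\phi \mapsto \phi(v)$, whose source $\E$ is an irreducible linear space; hence $\overline{\E(v)}$ is irreducible and $\E(v)$ contains a dense open subset of it. Because $\E$ is a semigroup, for any $\psi \in \E$ we have $\psi(\E(v)) \subset \E(\psi(v)) \subset \E(v)$ — wait, more simply, $\psi(\E(v)) = \{\psi(\phi(v))\} = \{(\psi\phi)(v)\} \subset \E(v)$ since $\psi\phi \in \E$; and passing to closures, $\psi(\overline{\E(v)}) \subset \overline{\E(v)}$, so $\overline{\E(v)}$ is $\E$-stable. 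By Proposition~\ref{E&D-invariance.prop}, $\overline{\E(v)}$ is therefore $\DDD_{\E}$-invariant, and since it contains $v$, minimality of $M(v)$ gives $M(v) \subseteq \overline{\E(v)}$.

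Next I would show $\DDD_{\E}(v) \subseteq T_v M(v)$: indeed $M(v)$ is $\DDD_{\E}$-invariant, so $\xi(v) \in T_v M(v)$ for every $\xi \in \DDD_{\E}$, which under the identification $T_vV = V$ reads $\DDD_{\E}(v) \subseteq T_v M(v)$. For the reverse direction, the key computation is to identify $\DDD_{\E}(v)$ explicitly. Since $\E$ is a linear subspace of $\End(V)$ and hence $T_{\id}\E$ can be taken to be $\E$ itself (as noted just before the lemma, $\Xi$ sends $\phi = (p_1,\dots,p_n)$ to $\sum p_i \di$), the evaluation $\xi_\phi(v)$ is simply $\phi(v) \in V$. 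Therefore $\DDD_{\E}(v) = \{\phi(v) \mid \phi \in \E\} = \E(v)$ as subsets of $V$ — but here is the subtlety: $\DDD_{\E}(v)$ is a linear subspace of $V$ (image of the linear map $\eps_v$ restricted to the linear space $\DDD_{\E}$), whereas a priori $\E(v)$ need only be the image of the morphism $\mu_v$. However $\mu_v$ is itself linear in $\phi$ because $\E$ is a linear space and evaluation at $v$ is linear, so $\E(v)$ is already a linear subspace of $V$. Thus $\E(v) = \DDD_{\E}(v)$ is a linear subspace; in particular it is closed and irreducible (it is isomorphic to affine space).

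Now I can close the cycle. We have the linear subspace $L := \E(v) = \DDD_{\E}(v)$, which is closed, $\E$-stable, and contains $v$; by Proposition~\ref{E&D-invariance.prop} it is $\DDD_{\E}$-invariant, so $M(v) \subseteq L$. Conversely $L = \DDD_{\E}(v) \subseteq T_v M(v)$; but $M(v)$ is irreducible and contains $v$, and I claim $\dim M(v) \geq \dim L$ forces $L = M(v)$ — more directly, I would argue $L \subseteq M(v)$ as follows: for each $\phi \in \E$ the curve $t \mapsto (t\phi + (1-t)\,\mathrm{e})(v)$ where $\mathrm{e}$ is chosen with $\mathrm{e}(v) = v$... actually the cleanest route is: since $\E$ is a linear space containing some element fixing $v$ (namely we may use $\id$ if $\id \in \E$, but $\E$ need not contain $\id$ — so instead use that for $\phi \in \E$, $v = \lim$ is not automatic). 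The robust argument: $\overline{\E(v)} = L$ already since $L$ is closed and equals $\E(v)$, so from the first paragraph $M(v) \subseteq \overline{\E(v)} = L$. For $L \subseteq M(v)$: $L$ is the image of the morphism $\E \to V$, and this morphism factors through $M(v)$ because... here I would instead invoke that $M(v) \supseteq \E(v)$ directly — every $\DDD_{\E}$-invariant closed set containing $v$ is $\E$-stable by the very equivalence we are proving, which is circular, so instead: $M(v)$ is $\DDD_{\E}$-invariant hence (Proposition~\ref{E&D-invariance.prop} reverse is not yet available) — the honest path is that $\E(v) \subseteq M(v)$ must be gotten from tangent-space considerations by an integration/Lie-theoretic argument, and \textbf{this is the step I expect to be the main obstacle}: showing that $\DDD_{\E}$-invariance of $M(v)$ propagates $v$ along all of $\E(v)$. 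I would handle it by noting $M(v)$ is $\E$-stable will follow once we know $\DDD_{\E}$-invariant implies $\E$-stable, but to break the circle I would prove the inclusion $\E(v) \subseteq M(v)$ by showing that the morphism $\mu_v\colon \E \to V$ has image contained in any $\DDD_{\E}$-invariant subvariety $Y \ni v$: the preimage $\mu_v^{-1}(Y)$ is closed in $\E$, contains $\id$ (or a chosen base point), and is a sub-semigroup-like set; using the differential formula $(*)$ and that $Y$ is $\DDD_{\E}$-invariant, one shows $\mu_v^{-1}(Y)$ is open, hence all of the irreducible $\E$. Granting that, $\E(v) \subseteq M(v)$, and combined with $M(v) \subseteq L = \E(v)$ we get $\E(v) = M(v) = \DDD_{\E}(v)$.

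Finally, the ``in particular'' and ``moreover'' statements: a closed $Y$ is $\DDD_{\E}$-invariant iff $M(y) \subseteq Y$ for all $y \in Y$ iff (by the equality just proved) $\E(y) \subseteq Y$ for all $y \in Y$ iff $Y$ is $\E$-stable — I would spell this out in one line each, using that $M(y)$ is the smallest $\DDD_{\E}$-invariant closed set through $y$. For the last assertion, since $\E(v) = \DDD_{\E}(v)$ is a linear subspace of dimension $d := \dim \DDD_{\E}(v)$, and since $w \mapsto \dim \DDD_{\E}(w)$ is lower semicontinuous (Lemma~\ref{lower-semi-cont.lem}) while on $\E(v)$ we have $\DDD_{\E}(w) = \E(w) \subseteq \E(v)$ (as $w \in \E(v)$ implies $\E(w) \subseteq \E(v)$ by the semigroup property) so $\dim \E(w) \leq d$; on the open set where equality $\dim \E(w) = d$ holds we get $\E(w) = \E(v)$ by dimension, and this open set is nonempty (it contains $v$) and dense in $\E(v)$, giving the claim.
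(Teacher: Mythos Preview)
Your proposal contains all the ingredients of the paper's proof, and your arguments for $\E(v) = \DDD_{\E}(v)$ (linearity of $\mu_v$), for $M(v) \subseteq \E(v)$ (via Proposition~\ref{E&D-invariance.prop}), and for the ``moreover'' statement (semicontinuity plus $\E(w) \subseteq \E(v)$) are essentially identical to the paper's. The gap is precisely the step you flag as the main obstacle, namely $\E(v) \subseteq M(v)$, and your proposed open-and-closed argument on $\mu_v^{-1}(Y)$ does not work as stated: since $\mu_v\colon \E \to L := \E(v)$ is a linear surjection, $\mu_v^{-1}(Y)$ is open in $\E$ if and only if $Y \cap L$ is open in $L$, which (as $Y\cap L$ is closed and nonempty) is equivalent to $L \subseteq Y$ --- the very thing you want to prove. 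So there is no leverage there.

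The paper's resolution is the dimension argument you mention briefly and then abandon. The key is to prove the ``moreover'' statement \emph{first} and use it to close the cycle. By semicontinuity together with $\DDD_{\E}(w)=\E(w)\subseteq \E(v)$ for $w\in\E(v)$, there is an open neighborhood $U_v$ of $v$ in $\E(v)$ on which $\DDD_{\E}(w) = \E(v)$. Now let $Y \subseteq \E(v)$ be closed, $\DDD_{\E}$-invariant, and contain $v$ (in particular $Y=M(v)$, which is irreducible). For every $w \in Y \cap U_v$ one has
\[
\E(v) = \DDD_{\E}(w) \subseteq T_w Y \subseteq T_w \E(v) = \E(v),
\]
so $T_wY = \E(v)$ on the nonempty open set $Y\cap U_v$ of $Y$. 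Picking such a $w$ that is also a smooth point of $Y$ gives $\dim Y = \dim \E(v)$, hence $Y = \E(v)$. This is exactly the step you were missing. (Incidentally, your worry about whether $\id \in \E$ is unfounded in the context of this paper: a semi-subgroup of $\End(V)$ is taken to contain $\id$, cf.\ Remark~\ref{closed-cone.rem}.)
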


\begin{proof}
(a) We have seen in Proposition~\ref{E&D-invariance.prop} that
$\E(v)$ is $\DDD_{\E}$-invariant, because it is stable under $\E$.
Hence, $\DDD_{\E}(w) \subset T_{w}\E(v)$ for all $w \in \E(v)$.
\par\smallskip
(b) The evaluation map $\mu_{v}\colon \E \to V$ is linear with image
$\E(v)$, hence $\E(v) \subset V$ is a linear subspace and
$\DDD_{\E}(v) = T_{v}\E(v) = \E(v)$.
\par\smallskip
(c) By Lemma~\ref{lower-semi-cont.lem} there is an open neighborhood
$U_{v}$ of $v$ in $\E(v)$ such that $\dim\DDD_{\E}(w) \geq
\dim\DDD_{\E}(v)$ for all $w \in U_{v}$. Hence, $\E(v)=\DDD_{\E}(v)
= \DDD_{\E}(w)=\E(w)$ for $w \in U_{v}$, by (a).
\par\smallskip
(d) It remains to prove the minimality, i.e. that $\E(v) = M(v)$.
Let $Y \subset \E(v)$ be closed and $\DDD_{\E}$-invariant with $v
\in Y$. Then, for every $w \in U_{v}\cap Y$, we have $\E(v) =
\DDD_{\E}(w) \subset T_{w}Y \subset \E(v)$. Hence, $\dim Y \geq \dim
\E(v)$, and so $Y = \E(v)$.
\end{proof}

\begin{rem}\label{closed-cone.rem}
If $\E \subset \End(V)$ is as in the lemma above, then it contains
the scalar multiplication $\bk \cdot\id$, and so $\E(v) \supset \bk
v$ for all $v \in V$. Therefore, every $\DDD_{\E}$-invariant closed
subvariety $X$ is a closed cone, i.e., contains with every point
$x\neq 0$ the line $\bk\cdot x$, and  every $\DDD_{\E}$-invariant
ideal is homogeneous.
\end{rem}

\ps
%%%%%%%%%%%%%%%%%%%%%%%%%%%%%%%%%%%%
\subsection{Linear semigroups}
One would like to extend the lemma above to a statement of the form
that a subvariety $Y \subset X$ is stable under a closed semigroup
$\E \subset \End(X)$ if and only if it is $\DDD_{\E}$-invariant
where $\DDD_{\E}$ is the image of $T_{\id}\E$ in $\VF(X)$. We do not
know if such a result holds in general, but we can prove it for
so-called {\it linear semigroups\/} $\E \subset \End(X)$ which is
sufficient for the applications we have in mind.

If $X \subset V$ is a closed subvariety, then $\End(X) \subset
\Mor(X,V)$. Thus we can form linear combinations of endomorphisms of
$X$, but in general the resulting morphism does not have its image
in $X$.

\begin{defn} A subsemigroup $\E \subset \End(X)$ is called {\it linear} if there is a closed embedding $X \into V$ into a vector space $V$ such that the image of $\E$ in $\Mor(X,V)$ is a linear subspace.
\end{defn}

\begin{thm}\label{main-theorem-1}
Let $X$ be an affine variety, and let $\E \subset \End(X)$ be a
linear semigroup. \be
\item
For any $x \in X$ we have $\E(x) = M(x)$.
\item
The subsets $\E(x)\subset X$ are closed and isomorphic to vector
spaces.
\item
$T_{x}M(x) = T_{x}\E(x) = \DDD_{\E}(x)$ for all $x \in X$. \ee In
particular, a closed subvariety $Y \subset X$ is
$\DDD_{\E}$-invariant if and only if it is $\E$-stable, i.e. it is a
union of $\E$-orbits.
\end{thm}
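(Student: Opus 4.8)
The plan is to reduce Theorem~\ref{main-theorem-1} to the vector-space case already settled in Lemma~\ref{main-lemma}. Fix a closed embedding $X \into V$ witnessing linearity, so that the image $\widetilde\E$ of $\E$ in $\Mor(X,V)$ is a linear subspace. The first step is to extend $\widetilde\E$ to a linear subspace of $\End(V)$ which is a semigroup. Each $\phi \in \E$ is a morphism $X \to X \subset V$; since $X$ is affine and closed in $V$, the coordinate functions $\phi^*(x_i|_X) \in \OOO(X)$ lift to polynomials on $V$, but not canonically. Instead I would work directly on $X$: one checks that $\widetilde\E$ together with the composition coming from $\E$ (i.e. $\psi \circ \phi$ for $\psi,\phi \in \E$, which lands back in $\End(X)$) behaves formally like the setup of Lemma~\ref{main-lemma}, because all that was used there was that evaluation $\mu_x \colon \E \to V$, $\phi \mapsto \phi(x)$, is linear, that $\E$ is closed under composition, and that $\E$ contains $\bk\cdot\id$ (or at least enough to run the semicontinuity argument). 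So the cleaner route: rerun the four-part proof of Lemma~\ref{main-lemma} verbatim with $V$ replaced by the ambient $V$ and $\E(x)$ the image of the linear map $\mu_x \colon \E \to V$.

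Concretely, for $x \in X$ the map $\mu_x \colon \E \to V$ is $\bk$-linear (composition of the linear inclusion $\E \hookrightarrow \Mor(X,V)$ with evaluation at $x$), so its image $\E(x) = \mu_x(\E)$ is a linear subspace of $V$; and since every $\phi \in \E$ maps $X$ into $X$, we have $\E(x) \subset X$, giving (2) once we know $\E(x)$ is closed in $X$ — which follows because a linear subspace of $V$ contained in the closed set $X$ is closed. For (3): by Proposition~\ref{E&D-invariance.prop}, $\E(x)$ is $\DDD_{\E}$-invariant, so $\DDD_{\E}(w) \subset T_w\E(x)$ for $w \in \E(x)$; and $\DDD_{\E}(x) = \xi_A(x)$ ranges over $(d\mu_x)_{\id}(T_{\id}\E)$, which equals $T_x(\mu_x(\E)) = T_x\E(x) = \E(x)$ since $\mu_x$ is linear. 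For (1), minimality: if $Y \subset \E(x)$ is closed, $\DDD_{\E}$-invariant, and contains $x$, use Lemma~\ref{lower-semi-cont.lem} to get an open neighborhood $U_x$ of $x$ in $\E(x)$ with $\dim\DDD_{\E}(w) \geq \dim\DDD_{\E}(x) = \dim\E(x)$ for $w \in U_x$; then for $w \in U_x \cap Y$ we get $\E(x) = \DDD_{\E}(w) \subset T_wY \subset T_w\E(x) = \E(x)$, forcing $\dim Y \geq \dim\E(x)$, hence $Y = \E(x)$. Combined with Proposition~\ref{E&D-invariance.prop} (one direction) this gives the final ``if and only if.''

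For the final assertion, one direction is Proposition~\ref{E&D-invariance.prop}: an $\E$-stable closed $Y$ is $\DDD_{\E}$-invariant. Conversely, if $Y$ is $\DDD_{\E}$-invariant and closed, then for each $y \in Y$ the subvariety $M(y)$ is contained in $Y$ (being the smallest $\DDD_{\E}$-invariant closed subvariety through $y$, by Lemma~\ref{D-invariant.lem}(4)); by part (1), $M(y) = \E(y)$, so $\E(y) \subset Y$ for every $y \in Y$, i.e. $Y$ is $\E$-stable.

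The main obstacle I anticipate is purely the bookkeeping around the embedding: one must make sure that the ``linearity'' of $\E$ is exactly the hypothesis needed to conclude that $\mu_x \colon \E \to V$ is linear and that the differential $(d\mu_x)_{\id}$ has image all of $T_x\E(x)$ — equivalently, that the ind-variety structure on $\E$ restricts compatibly from the linear structure on $\widetilde\E \subset \Mor(X,V)$, so that $T_{\id}\E \cong \widetilde\E$ as a linear space and $\xi_A$ for $A \in T_{\id}\E$ agrees with the naive formula. Once this identification is in place, every step of the proof of Lemma~\ref{main-lemma} transcribes without change; there is no genuinely new geometric input. A minor point to be careful about is that $\E(x)$, a priori just a linear subspace of $V$, indeed lies in $X$ and is therefore a closed subvariety of $X$ isomorphic to a vector space — this is immediate from $\phi(X) \subset X$ for all $\phi \in \E$.
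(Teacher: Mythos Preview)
Your proposal is correct and follows essentially the same strategy as the paper: both reduce to the vector-space situation of Lemma~\ref{main-lemma} via the linear embedding $\E \hookrightarrow \Mor(X,V)$. The only organizational difference is that the paper lifts $\E$ to a linear subspace $\tilde\E \subset \End(V)$ (using that the restriction map $\End(V) \to \Mor(X,V)$ is linear and surjective), checks that $\DDD_{\tilde\E}(x) = \DDD_{\E}(x)$ for $x \in X$, and then invokes Lemma~\ref{main-lemma} for $\tilde\E$ as a black box, whereas you rerun the four steps of that lemma directly for $\E$ sitting inside $\Mor(X,V)$. Both routes work; the paper's is slightly more economical in that it cites the lemma rather than reproving it, while yours trades the compatibility check $\DDD_{\tilde\E}|_X = \DDD_{\E}$ for the (equally routine) identification $T_{\id}\E \cong \E$ that you flag as ``bookkeeping.''
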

\begin{proof}
Choose a closed embedding $X \subset V$  such that $\E \subset
\Mor(X,V)$ is a linear subspace. Since the map $\End(V) \to
\Mor(X,V)$ is linear and surjective  there is a linear subspace
$\tilde\E \subset\End(V)$ whose image in $\Mor(X,V)$ is $\E$. In
particular, $X$ is stable under $\tilde\E$ and so  $\DDD_{\tilde\E}
\subset \VF_{X}(V)$. The linearity of the map $\End(V) \to \Mor(X,V)$
implies that the image of  $\DDD_{\tilde\E}$ under $\VF_{X}(V) \to
\VF(X)$ is $\DDD_{\E}$, i.e. $\DDD_{\tilde\E}(x) = \DDD_{\E}(x)$ for
all $x \in X$.

Now we apply  Lemma~\ref{main-lemma} to $\tilde \E$ and find that
$\E(x) = \tilde\E(x) = M(x)$, hence (1) and (2). Moreover, we have
$T_{x}\E(x) = T_{x}\tilde\E(x) = \tilde\E(x) = \DDD_{\tilde\E}(x) =
\DDD_{\E}(x)$, hence (3).

Finally, for a closed subvariety $Y \subset X \subset V$ the
$\DDD_{\E}$-invariance is the same as the
$\DDD_{\tilde\E}$-invariance, and $Y$ is stable under $\tilde \E$ if
and only if it is stable under $\E$. Hence the last claim follows
also from the lemma.
\end{proof}

If $\E \subset \End(X)$ is a linear semigroup we define $d_{\E}(X)
:= \max_{x\in X}\dim\E(x)$. By our theorem above we have $d_{\E}(X)
= d_{\DDD_{\E}}(X)$.

\begin{cor}
Let $X$ be an irreducible variety and $\E \subset \End(X)$ a linear
semigroup. The subset defined by $X':=\{x\in X\mid \dim \E(x) = d_{\E}(X)\}$ is then open
and dense in $X$, and the subsets $\E(x)\cap X'$ for $x \in X'$ form
a partition of $X'$.
\end{cor}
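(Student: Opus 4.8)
The plan is to derive everything from Theorem~\ref{main-theorem-1}, which identifies $\E(x)$ with the minimal $\DDD_{\E}$-invariant subvariety $M(x)$ and computes its tangent space, together with Lemma~\ref{lower-semi-cont.lem}. The first observation I would record is that for \emph{every} $x\in X$ one has $\dim\E(x)=\dim\DDD_{\E}(x)$: by Theorem~\ref{main-theorem-1}(2) the set $\E(x)$ is isomorphic to a vector space, hence smooth, so its dimension equals $\dim T_{x}\E(x)$, and $T_{x}\E(x)=\DDD_{\E}(x)$ by part~(3). In particular $d(X)=d_{\DDD_{\E}}(X)$ and $X'=\{x\in X\mid \dim\DDD_{\E}(x)=d_{\DDD_{\E}}(X)\}$.

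For openness and density, I would simply invoke Lemma~\ref{lower-semi-cont.lem} and the remark immediately following it: the set $X'$ written in the above form is exactly the locus where the lower semicontinuous function $x\mapsto\dim\DDD_{\E}(x)$ attains its maximum, hence is open and non-empty; since $X$ is irreducible, it is dense.

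For the partition statement, note first that $x\in M(x)=\E(x)$ for all $x$ by Theorem~\ref{main-theorem-1}(1), so the sets $\E(x)\cap X'$ with $x\in X'$ are non-empty (each contains its own $x$) and cover $X'$. The key claim is: if $x\in X'$ and $w\in\E(x)\cap X'$, then $\E(w)=\E(x)$. To prove it, observe that $\E(x)=M(x)$ is a closed $\DDD_{\E}$-invariant subvariety containing $w$, so by the minimality in Lemma~\ref{D-invariant.lem}(4) we get $M(w)\subseteq M(x)$, i.e. $\E(w)\subseteq\E(x)$; both are irreducible closed subvarieties (the $M(\,\cdot\,)$ are irreducible), and since $w,x\in X'$ the first paragraph gives $\dim\E(w)=d(X)=\dim\E(x)$, so the inclusion is an equality. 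Granting the claim, if $\E(x)\cap X'$ and $\E(y)\cap X'$ (with $x,y\in X'$) share a point $w$, then $w\in X'$ forces $\E(x)=\E(w)=\E(y)$, so the two sets coincide; thus the sets $\E(x)\cap X'$ are pairwise equal or disjoint, which is the partition assertion.

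There is no real obstacle here: essentially all the content is in Theorem~\ref{main-theorem-1}. The only point demanding a little care is the identity $\dim\E(x)=d(X)$ for $x\in X'$, which relies on $\E(x)$ being smooth (as a vector space) so that its dimension can be read off from the single tangent space $\DDD_{\E}(x)=T_{x}\E(x)$; everything else is formal.
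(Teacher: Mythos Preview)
Your proof is correct and follows essentially the same approach as the paper's own proof: lower semicontinuity for openness and density, then the inclusion $\E(w)\subseteq\E(x)$ combined with the dimension identity $\dim\E(x)=\dim\DDD_{\E}(x)$ to force equality when $w\in X'$. The only cosmetic difference is that the paper obtains the inclusion $\E(w)\subseteq\E(x)$ directly from the semigroup property (if $w=\phi(x)$ then $\psi(w)=(\psi\circ\phi)(x)\in\E(x)$), whereas you route it through the minimality of $M(w)$ and the identification $\E(x)=M(x)$; given Theorem~\ref{main-theorem-1}(1) these are equivalent.
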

\begin{proof}
The first part is Lemma~\ref{lower-semi-cont.lem}. If $y \in \E(x)$,
then $\E(y)\subset \E(x)$. Since, by the theorem above,  the $\E(x)$
are vector spaces and $\dim \E(x) = \dim \DDD_{\E}(x)$, we have
$\E(y) = \E(x)$ in case $y \in X'$. This proves the second claim.
\end{proof}

\begin{rem}
If an algebraic group $G$ acts on a variety $X$, then every element
$A \in \Lie G$ defines a vector field $\xi_{A}$. It is known that
for a connected group $G$ a closed subvariety $Y \subset X$ is
$G$-stable if and only if $Y$ is $\xi_{A}$-invariant for all $A \in
\Lie G$. A proof can be found in \cite[III.4.4,
Corollary~4.4.7]{Kr2016Algebraic-Transfor}, and the generalization
to actions of connected ind-groups on affine varieties is given in
\cite[Proposition~7.2.6]{FuKr2015On-the-geometry-of}. Our main
theorem above shows that a similar statement holds for linear
semigroups.
\end{rem}

\begin{rem}\label{Schwarz}
Let $G$ be a reductive group acting on an affine variety $X$. Denote
by $\pi\colon X \to X\quot G$ the algebraic quotient, i.e. the
morphism defined by the inclusion $\OOO(X)^G \into \OOO(X)$.  It is
then clear that every $G$-invariant vector field on $X$ induces a vector
field on the quotient $X\quot G$. \name{Schwarz} shows in
\cite{Sc2013Vector-fields-and-} that if the induced map $\VF(X)^G
\to \VF(X\quot G)$ is surjective, then the \name{Luna} strata of the
quotient $X\quot G$ are intrinsic, i.e. they are permuted by all
automorphisms of $X\quot G$. We will prove a similar statement in
Section~\ref{Inv-Luna-strat.subsec} with the methods developed in
this paper.
\end{rem}

\ms
%%%%%%%%%%%%%%%%%%%%%%%%%%%%%%%%%%%
\section{$\VG$-symmetry}
%%%%%%%%%%%%%%%%%%%%%%%%%%%%%%%%%%%
\subsection{$G$-equivariant endomorphisms}
Now consider an action of an algebraic group $G$ on the affine
variety $X$. Then the induced actions of $G$ on the coordinate ring
$\OOO(X)$ and on the vector fields $\VF(X)$ are locally finite and
rational, and the $G$-invariant vector fields $\VFG(X)$ form an
$\OOO(V)^{G}$-module. Note that the (linear) action of $G$ on
$\VF(X)$ is given by $g\xi:= dg \circ \xi \circ g^{-1}$ if we
consider $\xi$ as a section of the tangent bundle. If we regard
$\xi$ as a derivation $\delta$ of $\OOO(X)$, then  $g\delta :=
(g^{*})^{-1}\circ \delta \circ g^{*}$ where $g^{*}\colon \OOO(X) \to
\OOO(X)$ is the comorphism of $g \colon X \to X$.

The action of $G$ on $\End(X)$ by conjugation induces a linear
action on the tangent space $T_{\id}\End(X)$ which we denote by
$g\mapsto \Ad g$. It follows that the canonical map $\Xi\colon
T_{\id}\End(X) \into \VF(X)$ is $G$-equivariant. In fact, one has
the formula
$$
\xi_{\Ad g (A)}(gx) = dg \, \xi_{A}(x) \text{ for } A \in
T_{\id}\End(X), \ g \in  G \text{ and }  x \in X.
$$
This proves the first part of the following lemma.

\begin{lem}\label{G-invariantVF.lem}
We have $\xi(T_{\id}\End_{G}(X)) \subseteq \VF_{G}(X)$ with equality
if $X$ is a vector space $V$ with a linear action of $G$.
\end{lem}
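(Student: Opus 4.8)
The plan is to prove the two assertions of Lemma~\ref{G-invariantVF.lem} separately, the inclusion in general and the equality in the vector space case.

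\textbf{The inclusion.} First I would note that the inclusion $\Xi(T_{\id}\End_{G}(X)) \subseteq \VF_{G}(X)$ is essentially immediate from the displayed formula $\xi_{\Ad g(A)}(gx) = dg\,\xi_{A}(x)$ together with the observation that $\End_{G}(X) \subset \End(X)$ is the fixed-point set of the conjugation action of $G$, so that $T_{\id}\End_{G}(X) \subseteq (T_{\id}\End(X))^{G}$ (the tangent space of a closed subgroup-scheme of fixed points sits inside the fixed subspace of the tangent space, since $\Ad$ is the differential of conjugation and $G$ acts locally finitely and rationally). Hence for $A \in T_{\id}\End_{G}(X)$ we have $\Ad g(A) = A$ for all $g$, and the formula gives $\xi_{A}(gx) = dg\,\xi_{A}(x)$, which is exactly the condition $g\,\xi_{A} = \xi_{A}$, i.e. $\xi_{A} \in \VF_{G}(X)$. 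This paragraph is routine; I would spell out only the identification of $\End_{G}(X)$ with a fixed-point set and why its tangent space lies in the fixed subspace.

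\textbf{The equality when $X = V$.} Here, as explained in the excerpt, $\Xi\colon \End(V) = T_{\id}\End(V) \simto \VF(V)$ is an isomorphism of vector spaces, and it is $G$-equivariant. Therefore it restricts to an isomorphism between the $G$-fixed subspaces $\End(V)^{G}$ and $\VF(V)^{G} = \VF_{G}(V)$. So the only thing to check is that $\End_{G}(V) = \End(V)^{G}$ as subsets of $\End(V)$ — but this is exactly the definition: $\phi$ is $G$-equivariant iff $\phi(gv) = g\phi(v)$ for all $g,v$, iff $g^{-1}\circ \phi \circ g = \phi$, iff $\phi$ is fixed by the conjugation action. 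Since $\End(V)$ is a genuine vector space with $T_{\id}\End(V) = \End(V)$ canonically, we also get $T_{\id}\End_{G}(V) = \End_{G}(V) = \End(V)^{G}$, and applying the isomorphism $\Xi$ yields $\Xi(T_{\id}\End_{G}(V)) = \VF_{G}(V)$.

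\textbf{The main obstacle.} The delicate point is the passage from "$\phi$ fixed by conjugation" at the level of points to the statement about tangent spaces in the general (non-vector-space) case: one must justify that $T_{\id}$ of the fixed-point ind-subvariety $\End_{G}(X)$ is contained in the fixed subspace $(T_{\id}\End(X))^{\Ad G}$. For a reductive, or more generally linearly reductive, $G$ acting rationally and locally finitely on the (pro-)vector space $T_{\id}\End(X)$, taking $G$-invariants is exact and the tangent space to the fixed locus is precisely the fixed subspace, so the inclusion is in fact an equality at the tangent level; but for the stated lemma only the inclusion is needed and it holds for any $G$, since a curve through $\id$ inside $\End_{G}(X)$ has $\Ad g$-invariant velocity for every $g$. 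In the vector space case this subtlety evaporates because everything is already linear, which is why the equality is clean there; I would emphasize in the writeup that the reason the reverse inclusion can fail for general $X$ is that not every $G$-invariant vector field on $X$ need extend to (equivalently, come from) a $G$-equivariant endomorphism of $X$, whereas on $V$ the isomorphism $\Xi$ forces this.
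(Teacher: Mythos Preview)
Your proposal is correct and follows essentially the same line as the paper's proof: the inclusion comes from the $G$-equivariance of $\Xi$ (via the formula $\xi_{\Ad g(A)}(gx)=dg\,\xi_{A}(x)$) together with $T_{\id}\End_{G}(X)\subseteq (T_{\id}\End(X))^{G}$, and the equality for $X=V$ comes from the fact that $\Xi$ is then a $G$-equivariant linear isomorphism and $\End_{G}(V)=\End(V)^{G}$ is a linear subspace, so $T_{\id}\End_{G}(V)=(T_{\id}\End(V))^{G}$. Your treatment of the ``delicate point'' (why tangent vectors to the fixed locus are $\Ad$-invariant) is more explicit than the paper's, which simply takes this for granted, but the argument is otherwise the same.
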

\begin{proof}
It remains to see that for a linear action of $G$ on the vector
space $V$ we have the identification $T_{\id}\End_{G}(V) = (T_{\id}\End(V))^{G}$. But
this is clear, because  $\End(V)$ is a vector space, $\End_{G}(V) =
\End(V)^{G}$  is a linear subspace, and $\Xi\colon T_{\id}\End(V)
\simto \VF(V)$ is a $G$-equivariant linear isomorphism.
\end{proof}

\ps
%%%%%%%%%%%%%%%%%%%%%%%%%%%%%%%%
\subsection{$\VG$-symmetric subvarieties}
We now come to the main notion of this paper, the {\it
$\VG$-symmetry of subvarieties}. This was already discussed in the
introduction.
\begin{defn}
Let $X$ be an affine variety with an action of an algebraic group
$G$. A closed subvariety $Y \subseteq X$ is called {\it
$\VG$-symmetric\/} if $Y$ is $\VF_{G}(X)$-invariant, i.e., $Y$ is
parallel to all $G$-invariant vector fields $\xi$.
\end{defn}

If $V$ is a vector space with a linear action of the algebraic group
$G$, then $\End_{G}(V) \subset \End(V)$ is a linear subspace and, by
Lemma~\ref{G-invariantVF.lem} above,  the image of
$T_{\id}\End_{G}(V)$ in $\VF(V)$ is the subspace $\VF_{G}(V)$ of
$G$-invariant vector fields. Hence Theorem~\ref{main-theorem-1}
implies the following result.

\begin{thm}\label{main-theorem-2}
Let $V$ be a vector space with a linear action of an algebraic group
$G$. Then a closed subvariety $X \subset V$ is $\VG$-symmetric if
and only if it is stable under $\End_{G}(V)$.
\end{thm}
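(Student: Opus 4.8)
The plan is to deduce Theorem~\ref{main-theorem-2} directly from Theorem~\ref{main-theorem-1} by recognizing that $\E := \End_{G}(V)$ is a linear semigroup in the sense of the definition preceding that theorem. First I would observe that $\End_{G}(V) = \End(V)^{G}$ is the fixed-point set of the linear $\Ad$-action of $G$ on the vector space $\End(V) = \OOO(V) \otimes V$, hence it is a linear subspace of $\End(V)$; and it is closed under composition since the composition of two $G$-equivariant endomorphisms is again $G$-equivariant. Thus $\E = \End_{G}(V)$ is a linear semi-subgroup of $\End(V)$ (indeed, the ambient embedding $V \into V$ is the identity, so the image of $\E$ in $\Mor(V,V) = \End(V)$ is literally the linear subspace $\End_{G}(V)$).

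Next I would identify $\DDD_{\E}$ with $\VF_{G}(V)$. By definition $\DDD_{\E}$ is the image of $T_{\id}\E$ under $\Xi\colon T_{\id}\End(V) \simto \VF(V)$. Since $\E = \End_{G}(V)$ is a linear subspace of the vector space $\End(V)$, we have $T_{\id}\E = \E = \End_{G}(V)$ canonically, and Lemma~\ref{G-invariantVF.lem} tells us that $\Xi(T_{\id}\End_{G}(V)) = \VF_{G}(V)$ when the $G$-action on $V$ is linear. Hence $\DDD_{\E} = \VF_{G}(V)$, so "$X$ is $G$-symmetric" means exactly "$X$ is $\DDD_{\E}$-invariant."

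Now I would simply invoke Theorem~\ref{main-theorem-1} applied to this linear semigroup $\E = \End_{G}(V)$: a closed subvariety $X \subset V$ is $\DDD_{\E}$-invariant if and only if it is $\E$-stable, i.e. stable under $\End_{G}(V)$. Combining this with the two identifications of the previous paragraphs gives the claim. I do not expect any genuine obstacle here; the only points requiring care are bookkeeping ones — checking that $\End_{G}(V)$ really is a linear semi-subgroup (closure under composition and under linear combinations, which is routine), and making sure that the canonical identification $T_{\id}\End(V) = \End(V)$ intertwines the $G$-action with the $\Ad$-action so that Lemma~\ref{G-invariantVF.lem} applies verbatim. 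All of this has in fact been prepared in the preceding subsections, so the proof is essentially a two-line citation.

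\begin{proof}
As remarked above, $\End_{G}(V) = \End(V)^{G}$ is a linear subspace of the vector space $\End(V)$, and it is closed under composition, hence a linear semigroup (with respect to the identity embedding $V \into V$). By Lemma~\ref{G-invariantVF.lem}, its associated space of vector fields is $\DDD_{\End_{G}(V)} = \VF_{G}(V)$, so $X$ is $G$-symmetric if and only if it is $\DDD_{\End_{G}(V)}$-invariant. By Theorem~\ref{main-theorem-1}, the latter holds if and only if $X$ is stable under $\End_{G}(V)$.
\end{proof}
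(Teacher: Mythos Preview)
Your proposal is correct and follows essentially the same route as the paper: the paragraph preceding Theorem~\ref{main-theorem-2} already notes that $\End_{G}(V)\subset\End(V)$ is a linear subspace, invokes Lemma~\ref{G-invariantVF.lem} to identify $\DDD_{\End_{G}(V)}$ with $\VF_{G}(V)$, and then cites Theorem~\ref{main-theorem-1}. Your write-up just makes these steps explicit.
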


\begin{exa}
Let $V$ be a $G$-module, and assume that $V^{G}=\{0\}$. Define the
{\it null cone}
$$
\NNN_{0}:= \{v \in V \mid f(v) = 0 \text{ for all }f \in
\OOO(V)^{G}\text{ such that }f(0)=0\}.
$$
Then $\NNN_{0}\subset V$ is a closed $\VG$-symmetric subvariety.
\begin{proof}
We have $\OOO(V)=\bk\oplus \mm_{0}$ where $\mm_{0}$ is the maximal
ideal of $0\in V$, and $\NNN_{0}$ is the zero set of $\mm_{0}^{G}$.
Since $V^{G}$ is fixed under every $G$-equivariant endomorphism
$\phi$ of $V$, we get $\phi^{*}(\mm_{0}^{G}) \subset \mm_{0}^{G}$,
and so $\NNN_{0}$ is stable under $\End_{G}(V)$. Now the claim
follows from the theorem above.
\end{proof}
\end{exa}

\ps
%%%%%%%%%%%%%%%%%%%%%%%%%%%%%%%%%
\subsection{Stabilizers}
The next result deals with the relation between $\VG$-symmetric
subvarieties and the $G$-action on $X$. We denote by $G_{x}
\subseteq G$ the stabilizer of $x\in X$, and by $M(x)$ the minimal
$\VF_{G}(X)$-symmetric subvariety containing $x$
(Lemma~\ref{D-invariant.lem}(4)).

\begin{lem}
Let $X$ be an affine $G$-variety. \be
\item
If $Y \subset X$ is a $\VG$-symmetric closed subvariety, then $gY
\subset X$ is $\VG$-symmetric for all $g \in G$.
\item
For $x \in X$ we have $\xi(x) \subseteq (T_{x}X)^{G_{x}}$ for all
$\xi\in \VF_{G}(X)$.
\item
For $x \in X$ and $g \in G$ we have  $gM(x)=M(gx)$, and so
$gM(x)=M(x)$ for $g \in G_{x}$. \ee
\end{lem}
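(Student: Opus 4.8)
The plan is to prove the three assertions in turn, each one reducing quickly to the naturality of the constructions with respect to the $G$-action.

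For part (1), I would use the $G$-equivariance of $\Xi$ together with the action-on-vector-fields formula recorded just before Lemma~\ref{G-invariantVF.lem}, namely $g\xi = dg\circ\xi\circ g^{-1}$. If $Y$ is $\VF_G(X)$-invariant and $\xi\in\VF_G(X)$, then for $y\in gY$, write $y=gy'$ with $y'\in Y$. Since $g\xi=\xi$, we have $\xi(gy') = dg(\xi(y'))$; because $\xi(y')\in T_{y'}Y$ and $dg\colon T_{y'}Y \to T_{gy'}(gY)$, we get $\xi(y)\in T_y(gY)$. Hence $gY$ is $\VF_G(X)$-invariant. (One should remark that $gY$ is again closed, which is immediate since $g$ is an automorphism of $X$.)

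For part (2), the point is that $G_x$ acts on $T_xX$ via the differential, and I want to show every vector of the form $\xi(x)$ with $\xi\in\VF_G(X)$ is fixed by this action. Take $g\in G_x$ and $\xi\in\VF_G(X)$. Using the same formula $\xi(gx)=dg(\xi(x))$ (valid because $g\xi=\xi$) and the fact that $gx=x$, we obtain $dg(\xi(x)) = \xi(x)$, i.e. $\xi(x)\in (T_xX)^{G_x}$. This gives $\VF_G(X)(x)\subseteq (T_xX)^{G_x}$.

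For part (3), I would argue that $g(-)$ sets up a bijection between $\VF_G(X)$-invariant closed subvarieties containing $x$ and those containing $gx$: by part (1), if $Z$ is $\VF_G(X)$-invariant and $x\in Z$, then $gZ$ is $\VF_G(X)$-invariant and $gx\in gZ$; and $Z\mapsto gZ$ is invertible with inverse $Z\mapsto g^{-1}Z$. Moreover this bijection is inclusion-preserving, so it carries the minimal such subvariety to the minimal such subvariety; that is, $gM(x) = M(gx)$. Taking $g\in G_x$, so that $gx=x$, yields $gM(x)=M(x)$, i.e. $M(x)$ is $G_x$-stable. I do not anticipate a genuine obstacle here; the only care needed is the bookkeeping in part (1) that $dg$ indeed carries $T_{y'}Y$ onto $T_{gy'}(gY)$ (which holds because $g|_Y\colon Y\to gY$ is an isomorphism of varieties), and the observation that the minimal $\VF_G(X)$-invariant subvariety exists by Lemma~\ref{D-invariant.lem}(4).
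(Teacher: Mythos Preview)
Your proof is correct and follows essentially the same route as the paper: the key identity $\xi(gx)=dg\,\xi(x)$ for $\xi\in\VF_G(X)$ is used to establish (1) and (2) exactly as you do, and the paper's one-line justification of (3) via ``minimality of $M(x)$'' is precisely the bijection argument you spell out in more detail.
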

\begin{proof}
(1) If $\xi$ is a $G$-invariant vector field, then $dg \,\xi(x) =
\xi(gx)$ for $x \in X$, $g\in G$. This shows that $\xi(y) \in
T_{y}Y$ if and only if $\xi(gy) \in T_{gy}gY$, and the claim
follows.
\par\smallskip
(2) The formula in (1) shows that for a $G$-invariant vector field
$\xi$ we get $dg \,\xi(x) = \xi(x)$ for $g \in G_{x}$. Hence $\xi(x)
\in (T_{x}X)^{G_{x}}$.
\par\smallskip
(3) This follows from the  minimality of $M(x)$.
\end{proof}

In case of a linear action of $G$ on a vector space $V$ we get the
following result.
\begin{prop}
Let $V$ be a  $G$-module. \be
\item For every closed subgroup $H \subset G$ the fixed point set $V^{H}$ is $\VG$-symmetric.
\item For all $v \in V$ we have $M(v)=\End_{G}(V)(v) \subset V^{G_{v}}$.
\ee
\end{prop}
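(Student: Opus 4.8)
The plan is to deduce both parts directly from Theorem~\ref{main-theorem-2} and Theorem~\ref{main-theorem-1}; the only substantive input is the translation ``$G$-symmetric $\Leftrightarrow$ stable under $\End_{G}(V)$'', and once that is in hand both assertions reduce to the elementary observation that a $G$-equivariant endomorphism $\phi$ of $V$ satisfies $\phi(g\cdot v)=g\cdot\phi(v)$, which forces $\phi$ to preserve every fixed-point space. Note first that $\End_{G}(V)$ is a linear subspace of $\End(V)=\Mor(V,V)$, hence a linear semigroup in the sense of the definition (take the identity embedding $V\into V$), so the two main theorems apply to $\E=\End_{G}(V)$.

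For (1): by Theorem~\ref{main-theorem-2} it suffices to show that the closed subvariety $V^{H}$ is stable under $\End_{G}(V)$. Given $\phi\in\End_{G}(V)$ and $v\in V^{H}$, for every $h\in H$ we have $h\cdot\phi(v)=\phi(h\cdot v)=\phi(v)$, so $\phi(v)\in V^{H}$. Thus $V^{H}$ is $\End_{G}(V)$-stable, hence $G$-symmetric.

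For (2): applying Theorem~\ref{main-theorem-1}(1) to $\E=\End_{G}(V)$ gives $\E(v)=M(v)$, where $M(v)$ is the minimal $\DDD_{\E}$-invariant subvariety containing $v$; since the image of $T_{\id}\End_{G}(V)$ in $\VF(V)$ is $\VF_{G}(V)$ by Lemma~\ref{G-invariantVF.lem}, this $M(v)$ is exactly the minimal $\VF_{G}(V)$-invariant subvariety, i.e.\ the $M(v)$ of the statement. For the inclusion, take $\phi\in\End_{G}(V)$; for $g\in G_{v}$ we have $g\cdot\phi(v)=\phi(g\cdot v)=\phi(v)$, so $\phi(v)\in V^{G_{v}}$, whence $\End_{G}(V)(v)\subseteq V^{G_{v}}$. (Alternatively, the preceding Lemma gives $T_{v}(\End_{G}(V)(v))=\DDD_{\E}(v)=\VF_{G}(V)(v)\subseteq (T_{v}V)^{G_{v}}=V^{G_{v}}$, and since $\End_{G}(V)(v)$ is a linear subspace it coincides with its own tangent space at $v$.)

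The main obstacle here is essentially nonexistent: all the real work is carried by Theorems~\ref{main-theorem-1} and \ref{main-theorem-2} and by Lemma~\ref{G-invariantVF.lem} identifying the image of $T_{\id}\End_{G}(V)$ with $\VF_{G}(V)$. The only points needing a moment's care are verifying that $\End_{G}(V)$ is a linear semigroup (immediate) and keeping the identification $T_{v}V=V$ consistent, so that the statements ``$M(v)$ is a linear subspace of $V$'' and ``$M(v)\subseteq V^{G_{v}}$'' make literal sense.
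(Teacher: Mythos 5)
Your proof is correct and follows essentially the same route as the paper: part (1) is identical (stability of $V^{H}$ under $\End_{G}(V)$ plus Theorem~\ref{main-theorem-2}), and for part (2) the paper simply deduces $M(v)\subset V^{G_{v}}$ from (1) together with the minimality of $M(v)$, whereas you verify the inclusion directly on the orbit $\End_{G}(V)(v)$ --- an equivalent one-line computation. The equality $M(v)=\End_{G}(V)(v)$ is, in both cases, an application of Theorem~\ref{main-theorem-1} via Lemma~\ref{G-invariantVF.lem}, exactly as you state.
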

\begin{proof}
(1) It is clear that $V^{H}$ is stable under all $G$-equivariant
endomorphisms, and so the claim follows from
Theorem~\ref{main-theorem-2}.
\par\smallskip
(2) By (1), $V^{G_{v}}$ is $\VG$-symmetric and contains $v$, hence
$M(v) \subset V^{G_{v}}$ by the minimality of $M(v)$.
\end{proof}
\begin{exa}\label{diagonal.exa}
Let $G\to\GL(V)$ be a diagonalizable representation of an algebraic
group $G$. Then, for a generic $v\in V$, we have $\End_{G}(V)(v) =
V$. In particular, $d_{\End_{G}(V)}(V)=\dim V$.

In fact, let $V = \bigoplus_{\chi\in \Omega} V_{\chi}$ be the
decomposition into weight spaces where $\Omega \subset X(G)$ are
those characters $\chi$ of $G$ such that $V_{\chi}:=\{v\in V \mid
gv=\chi(g)\cdot v\}$ is nontrivial. Then $\End_{G}(V)$ contains
$\LLL:=\bigoplus_{\chi\in\Omega}\LLL(V_{\chi})$ where $\LLL(W)$
denotes the linear endomorphisms of the vector space $W$.  It
follows that for any $v = (v_{\chi})_{\chi\in\Omega}$ such that
$v_{\chi}\neq 0$ for all $\chi\in\Omega$ we have $\LLL(V) = V$, thus
the claim.
\end{exa}

\ps
%%%%%%%%%%%%%%%%%%%%%%%%%%%%%%%%%%%%
\subsection{Reductive groups}\label{RedGr.subsec}
If $X$ is an affine $G$-variety and $Y \subset X$ a closed and
$G$-stable subvariety, then  $\VF_{Y}(X) \subset \VF(X)$ is a
$G$-submodule and the linear map $\rho\colon\VF_Y(X) \to \VF(Y)$ is
$G$-equivariant. If $Y$ is also $\VG$-symmetric, then
$\rho(\VF_{G}(X)) \subset \VF_{G}(Y)$. But this might be a strict
inclusion, i.e., not every $G$-invariant vector field on $Y$ is
obtained by restricting a $G$-invariant vector field from $X$ (see
Example~\ref{non-liftable-VF.exa} in Section 6). However, if $G$ is
reductive and $X$ is a vector space, then we get $\rho(\VF_{G}(X)) =
\VF_{G}(Y)$. Indeed, $\rho\colon \VF(X) \to \VF(Y)$ is surjective
and, since $G$ is reductive, maps $G$-invariants onto
$G$-invariants. This gives the following result.

\begin{lem}\label{surjectivity-of-VecG.lem}
Let $V$ be a $G$-module and $X \subset V$ a closed $G$-stable and
$\VG$-symmetric subvariety. If a closed subvariety $Y \subset X$ is
$\VG$-symmetric with respect to the action of $G$ on $X$, then it is
also $\VG$-symmetric with respect to the action on $V$. If $G$ is
reductive, then the converse also holds.
\end{lem}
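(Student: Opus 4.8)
The plan is to transport $G$-invariant vector fields back and forth along the restriction homomorphism $\rho\colon\VF_{X}(V)\to\VF(X)$, $\xi\mapsto\xi|_{X}$. Since $X\subset V$ is $G$-stable and $G$-symmetric we have $\VF_{G}(V)=\VF(V)^{G}\subseteq\VF_{X}(V)$, so $\rho$ is defined on every $G$-invariant vector field of $V$; and as $\rho$ is $G$-equivariant, $\rho(\VF_{G}(V))\subseteq\VF_{G}(X)$. The one elementary observation I would record first is: for $\xi\in\VF_{X}(V)$ and $y\in Y\subseteq X$ one has $(\rho\xi)(y)=\xi(y)$ under the inclusions $T_{y}Y\subseteq T_{y}X\subseteq T_{y}V$; therefore $\rho\xi$ is parallel to $Y$ inside $X$ if and only if $\xi(y)\in T_{y}Y$ for all $y\in Y$, i.e. $\xi$ is parallel to $Y$ inside $V$.

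For the first implication, assume $Y$ is $G$-symmetric for the action on $X$, i.e. $\VF_{G}(X)$-invariant. For any $\xi\in\VF_{G}(V)$ we get $\rho\xi\in\VF_{G}(X)$, hence $\rho\xi$ is parallel to $Y$ in $X$, and by the observation $\xi(y)\in T_{y}Y$ for all $y\in Y$. Thus $Y$ is $\VF_{G}(V)$-invariant, i.e. $G$-symmetric for the action on $V$. Note that this direction uses only that $X$ is $G$-stable and $G$-symmetric, not reductivity.

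For the converse assume $G$ reductive. Because $V$ is a vector space, $\rho\colon\VF_{X}(V)\to\VF(X)$ is surjective; since $\VF_{X}(V)$ and $\VF(X)$ are rational $G$-modules and the invariants functor is exact for reductive $G$, $\rho$ maps $G$-invariants onto $G$-invariants, i.e. $\rho(\VF_{G}(V))=\VF_{G}(X)$ --- this is the computation recorded just before the lemma, applied to the pair $X\subseteq V$. Now if $Y$ is $\VF_{G}(V)$-invariant and $\eta\in\VF_{G}(X)$ is arbitrary, write $\eta=\rho\xi$ with $\xi\in\VF_{G}(V)$; then $\eta(y)=\xi(y)\in T_{y}Y$ for all $y\in Y$, so $Y$ is $\VF_{G}(X)$-invariant. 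I expect the only real point to watch is the surjectivity of $\rho$ on $G$-invariants: surjectivity of $\rho$ itself needs the \emph{ambient} space $V$ to be a vector space, and passing to invariants needs complete reducibility of rational $G$-modules; neither ingredient is available in general, which is presumably why the converse is stated only for reductive $G$.
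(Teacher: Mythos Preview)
Your proof is correct and follows essentially the same approach as the paper: the argument is precisely the one sketched in the paragraph preceding the lemma, using the restriction map $\rho\colon\VF_{X}(V)\to\VF(X)$, the inclusion $\rho(\VF_{G}(V))\subseteq\VF_{G}(X)$ for the forward direction, and the equality $\rho(\VF_{G}(V))=\VF_{G}(X)$ (surjectivity of $\rho$ plus exactness of invariants for reductive $G$) for the converse. Your write-up just makes the pointwise identification $(\rho\xi)(y)=\xi(y)$ explicit, which the paper leaves implicit.
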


\begin{exa}
Consider the adjoint representation of $\GLn=\GL_{n}(\bk)$ on the
matrices $\M_{n}=\M_{n}(\bk)$. It follows from classical invariant
theory that $\End_{\GL_{n}}(\M_{n})$ is a free module over the
invariants $\OOO(\M_{n})^{\GLn}$, with basis $(p_{i}\colon A \mapsto
A^{i}\mid i=0,\ldots,n-1)$. Note that $p_{0}$ is the constant map $A
\mapsto E$. It follows that the minimal symmetric subspaces $M(A)$
are given by
$$
M(A) = \sum_{i=0}^{n-1}\bk A^{i}.
$$
In particular, a closed subset $Y \subset V$ is $\GL_{n}$-symmetric
if and only if, for any $A \in Y$, the vector space spanned by  all
powers $A^{0}=E, A, A^{2}, \ldots$ is contained in $Y$. Note that
the minimal subsets $M(A) \subset \M_{n}$ are exactly the {\it
commutative unitary subalgebras of $\M_{n}(\bk)$} generated by one
element.

Recall that a matrix $A$ is {\it regular\/} if its centralizer
$(\GLn)_{A}$ has dimension $n$ which is the minimal dimension of a
centralizer.  Equivalently, the minimal polynomial of $A$ coincides
with the characteristic polynomial of $A$. The following is known.
\be
\item
$A$ is regular if and only if $\dim M(A) = n$.
\item
For a regular matrix  $A$ one has $M(A) = (\M_{n})^{(\GL_{n})_{A}}$.
\ee An example of a closed $\VG$-symmetric subvariety is the
nilpotent cone $\NNN\subset M_{n}$ consisting of all nilpotent
matrices. It is also known that for a nilpotent matrix $N$ all
powers $N^{k}$ are contained in the closure of the conjugacy class
$C(N)$ of $N$, as well as their linear combinations. (In fact,
$N':=\sum_{k\geq0}a_{k}N^{k}$ is conjugate to $N$ if $a_{0}\neq 0$,
because $\ker {N'}^{j}=\ker N^{j}$ for all $j$.)  Hence these
closures $\overline{C(N)}$ are $\VG$-symmetric as well.
\end{exa}

In the example above we have $M(A) = (\M_{n})^{(\GLn)_{A}}$ for a
regular matrix $A$. This is an instance of the following general
result which is due to \name{Panyushev}
\cite[Theorem~1]{Pa2002On-covariants-of-r}. For the convenience of
the reader we give a short proof.

\begin{prop}\label{Panyushev.prop}
Let $V$ be a $G$-module where $G$ is reductive. If the closure
$\overline{Gv}$ of the orbit of $v$ is normal and if
$\codim_{\overline{Gv}}(\overline{Gv}\setminus Gv) \geq 2$, then
$M(v) = V^{G_{v}}$.
\end{prop}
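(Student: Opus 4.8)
We already know from the Proposition on $G$-symmetric subvarieties that $M(v) = \End_G(V)(v) \subseteq V^{G_v}$, so the task is the reverse inclusion $V^{G_v} \subseteq \End_G(V)(v)$. The natural approach is to produce, for each $w \in V^{G_v}$, an explicit $G$-equivariant polynomial endomorphism $\phi \in \End_G(V)$ with $\phi(v) = w$. Since $Gv$ is closed (it is affine as an orbit, but here we are assuming $\overline{Gv}$ is normal with small complement, so $Gv = \overline{Gv}$ is in fact closed once we note that a normal variety with an orbit whose complement has codimension $\ge 2$ forces $\overline{Gv} \setminus Gv$ to be empty — or rather, we will work on $\overline{Gv}$ directly), the orbit map $G/G_v \simto Gv$ is an isomorphism onto the closed orbit, and $G$ being reductive we may use the classical theory of the module of covariants.

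**Key steps.** First I would identify the space of covariants $\End_G(V) = \Mor_G(V,V)$ with $(\OOO(V)\otimes V)^G$, and by restriction get a map to $\Mor_G(\overline{Gv}, V) = (\OOO(\overline{Gv})\otimes V)^G$. The crucial point is to show this restriction map is \emph{surjective}: a covariant on $\overline{Gv}$ extends to a covariant on $V$. Here is where reductivity and the normality/codimension hypotheses enter. A $G$-equivariant morphism $\psi\colon \overline{Gv}\to V$ corresponds to a $G$-equivariant map $\OOO(V)\to\OOO(\overline{Gv})$ twisted by $V$; using that $G$ is reductive, the surjection $\OOO(V)\onto\OOO(\overline{Gv})$ splits $G$-equivariantly, so $\psi$ lifts to $\tilde\psi\colon V\to V$ in $\End_G(V)$ with $\tilde\psi|_{\overline{Gv}} = \psi$. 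Thus $\End_G(V)(v) \supseteq \Mor_G(\overline{Gv}, V)(v) = \{\psi(v)\mid \psi\in\Mor_G(\overline{Gv},V)\}$. Second, I would show $\Mor_G(\overline{Gv},V)(v) = V^{G_v}$. Now $\Mor_G(\overline{Gv},V) \cong \Mor_G(Gv, V)$ \emph{if} we can extend $G$-equivariant morphisms $Gv\to V$ across the boundary — this is exactly where normality of $\overline{Gv}$ and $\codim(\overline{Gv}\setminus Gv)\ge 2$ are used, via the algebraic Hartogs principle: a morphism to the affine space $V$ defined on the complement of a codimension-$\ge 2$ subset of a normal variety extends. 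Finally, $\Mor_G(Gv, V) = \Mor_G(G/G_v, V) \cong (\OOO(G)\otimes V)^{G_v}$ by Frobenius reciprocity, and evaluation at the base point $eG_v \leftrightarrow v$ is precisely the map $(\OOO(G)\otimes V)^{G_v} \to V^{G_v}$, $F \mapsto F(e)$, which is surjective since a $G_v$-fixed vector $w\in V^{G_v}$ is hit by the constant function $F \equiv w$.

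**The main obstacle.** The delicate point is the normality/codimension hypothesis and how it feeds the extension of covariants from $Gv$ to $\overline{Gv}$. One must be careful that the Hartogs-type extension argument applies: a $G$-equivariant morphism $Gv \to V$, viewed as a regular map to affine space, is defined on $\overline{Gv}\setminus (\overline{Gv}\setminus Gv)$, and since $\overline{Gv}$ is normal and the removed set has codimension $\ge 2$, each coordinate function extends regularly across it; the extension is automatically $G$-equivariant by density. The other place to be cautious is the equivariant splitting $\OOO(V)\onto\OOO(\overline{Gv})$: since $G$ is reductive and these are rational $G$-modules, the surjection admits a $G$-equivariant section of vector spaces, hence tensoring with $V$ and taking $G$-invariants preserves surjectivity; this is standard but worth stating. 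Assembling the chain $V^{G_v} = \Mor_G(Gv,V)(v) = \Mor_G(\overline{Gv},V)(v) \subseteq \End_G(V)(v) = M(v) \subseteq V^{G_v}$ then forces equality throughout.
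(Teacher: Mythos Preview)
Your approach is essentially the paper's: for $w\in V^{G_v}$ build a $G$-equivariant map $Gv\to V$ sending $v\mapsto w$, extend it to $\overline{Gv}$ via algebraic Hartogs (normality and $\codim\ge 2$ give $\OOO(\overline{Gv})=\OOO(Gv)$), then lift to a covariant $V\to V$ using that $G$ is reductive so the surjection $\OOO(V)\onto\OOO(\overline{Gv})$ admits a $G$-equivariant splitting. The paper does exactly this, only more tersely.

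One slip worth fixing: your Frobenius-reciprocity identification $\Mor_G(G/G_v,V)\cong(\OOO(G)\otimes V)^{G_v}$ is off---the right-hand side (with $G_v$ acting by right translation on $G$) is $\Mor(G/G_v,V)$, not the $G$-equivariant maps---and accordingly the constant function $F\equiv w$ is \emph{not} $G$-equivariant unless $w\in V^G$. The correct statement is simply $\Mor_G(G/G_v,V)\simto V^{G_v}$ via $\psi\mapsto\psi(eG_v)$, and the preimage of $w$ is the map $gG_v\mapsto gw$ (well-defined precisely because $G_v\subseteq G_w$), which is exactly how the paper writes it. Also, drop the aside about $Gv$ being closed: the hypotheses do not force $\overline{Gv}\setminus Gv=\varnothing$, and you correctly abandon that line anyway.
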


\begin{proof}
The assumptions on the orbit closure imply that $\OOO(\overline{Gv})
= \OOO(Gv)$. Let $w \in V^{G_{v}}$. We will show that there is a
$G$-equivariant morphism $\phi\colon V \to V$ such that $\phi(v)=w$.
Since $G_{w}\supseteq G_{v}$ there is a $G$-equivariant morphism
$\mu\colon Gv \to V$ such that $\mu(v)=w$. The comorphism has the
form $\mu^{*}\colon \OOO(V) \to \OOO(Gv) = \OOO(\overline{Gv})$,
hence $\mu$ extends to a morphism $\tilde\mu\colon \overline{Gv} \to
V$ which is again $G$-equivariant. Since $G$ is reductive and
$\overline{Gv}\subset V$ closed and $G$-stable, the morphism
$\tilde\mu$ extends to a $G$-equivariant morphism $\phi\colon V \to
V$ with $\phi(v)=w$.
\end{proof}

\ps
%%%%%%%%%%%%%%%%%%%%%%%%%%%%%%%%%%%%%
\subsection{Dense orbits}
Let $X$ be an irreducible affine variety, and let $\E \subset
\End(X)$ be a subsemigroup. An interesting question is whether $\E$
has a dense orbit, i.e. whether there exists an $x \in X$ such that
$\overline{\E(x)}=X$.

\begin{lem}\label{DenseOrbit.lem}
Let $\E \subset \End(X)$ be a linear semigroup. Then the following
are equivalent. \be
\item[(i)] $\E$ has a dense orbit in $X$.
\item[(ii)] $d_{\E}(X) = \dim X$.
\item[(iii)] There exists an $x \in X$ such that $\E(x)=X$.
\item[(iv)] One has $\E(x) = X$ for all $x$ in an open dense subset of $X$.
\ee If this holds, then $X$ is a vector space.
\end{lem}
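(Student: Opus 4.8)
The plan is to establish the cycle of implications (iii) $\Rightarrow$ (iv) $\Rightarrow$ (i) $\Rightarrow$ (ii) $\Rightarrow$ (iii), and then to deduce the final assertion that $X$ is a vector space from (iii). Most of the work has already been done in Theorem~\ref{main-theorem-1} and Lemma~\ref{lower-semi-cont.lem}; the proof is largely an exercise in unwinding what those results say.

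First I would do the easy implications. For (iii) $\Rightarrow$ (iv): if $\E(x)=X$ for some $x$, then $\dim\E(x)=\dim X$, so $d(X)=\dim X$ by definition of $d(X)$ as the maximum; by Lemma~\ref{lower-semi-cont.lem} the set $X'=\{u\mid\dim\E(u)=d(X)\}$ is open and dense, and for every $u\in X'$ the subset $\E(u)\subseteq X$ is closed (Theorem~\ref{main-theorem-1}(2)) of dimension $\dim X$, hence all of $X$ since $X$ is irreducible. This gives (iv), and (iv) $\Rightarrow$ (i) is trivial since an open dense orbit has closure $X$. For (i) $\Rightarrow$ (ii): if $\overline{\E(x)}=X$ then, since $\E(x)=M(x)$ is closed and irreducible (Theorem~\ref{main-theorem-1}(1) and Lemma~\ref{D-invariant.lem}(5)), in fact $\E(x)=X$, whence $\dim\E(x)=\dim X$ and so $d(X)=\dim X$. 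For (ii) $\Rightarrow$ (iii): pick $x$ with $\dim\E(x)=d(X)=\dim X$; then $\E(x)$ is a closed subvariety of the irreducible $X$ of full dimension, hence equals $X$.

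For the final claim, suppose (iii) holds, i.e.\ $\E(x)=X$ for some $x$. By Theorem~\ref{main-theorem-1}(2) the orbit $\E(x)$ is isomorphic to a vector space, so $X$ is isomorphic to a vector space; one should check that the isomorphism in question is the one coming from the proof of Theorem~\ref{main-theorem-1} — namely, after a closed embedding $X\subset V$ with $\tilde\E\subset\End(V)$ a linear lift of $\E$, the evaluation map $\mu_x\colon\tilde\E\to V$ is linear with image $\tilde\E(x)=\E(x)=X$, exhibiting $X$ as a linear subspace of $V$.

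I do not expect any genuine obstacle here: the lemma is essentially a corollary of Theorem~\ref{main-theorem-1}, and the only point requiring a little care is the repeated use of the fact that the orbits $\E(x)$ are \emph{closed} (not merely constructible), which is exactly what lets one pass from ``dense orbit'' to ``orbit equal to $X$'' and from ``an orbit of full dimension'' to ``that orbit is $X$.'' That closedness is supplied by Theorem~\ref{main-theorem-1}(2), so everything goes through cleanly.
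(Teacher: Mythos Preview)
Your proposal is correct and follows essentially the same approach as the paper: the key observation is that the orbits $\E(x)$ are closed (being linear subspaces of $V$ under the chosen embedding, via Theorem~\ref{main-theorem-1}(2)), which immediately collapses the distinction between ``dense orbit'' and ``orbit equal to $X$'' and makes the equivalences routine. The only cosmetic difference is that for (iii) $\Rightarrow$ (iv) the paper invokes the last statement of Lemma~\ref{main-lemma} directly (that $\E(w)=\E(v)$ for $w$ in an open neighborhood of $v$ in $\E(v)$), whereas you go through semicontinuity and a dimension count; both arguments are equally short and valid.
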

\begin{proof}
If $\E$ is a linear semigroup, then $\E(v)\subset V$ is a linear
subspace and therefore closed in $X$. It is now clear that the first
three statements are equivalent, and (iv) follows from (iii) and the
last statement of Lemma~\ref{main-lemma}.
\end{proof}
\begin{prop}
Let $G$ be a reductive group, and let $V$ be a faithful $G$-module.
\be
\item
If the generic $G$-orbits in $V$ are closed with trivial stabilizer,
then $\End_{G}(V)$ has a dense orbit in $V$, i.e.
$d_{\End_{G}(V)}(V)=\dim V$.
\item
If $G$ is semisimple and $d_{\End_{G}(V)}(V)=\dim V$, then the
generic $G$-orbits in $V$ are closed with trivial stabilizer. \ee
\end{prop}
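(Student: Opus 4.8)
The plan is to use Theorem~\ref{main-theorem-1} (in its vector space form, Lemma~\ref{main-lemma}) to reduce both statements to a dimension count on $\DDD_{\E}(v) = \End_{G}(V)(v)$, and then to exploit reductivity, respectively semisimplicity, to pass between the space of covariants applied to $v$ and the tangent space to the $G$-orbit together with the $G$-invariants.

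For (1): assume the generic orbit $Gv$ is closed with $G_v = \{e\}$. By Proposition~\ref{Panyushev.prop}, applied at a generic $v$ where $Gv$ is closed (hence $\overline{Gv} = Gv$ is smooth and the boundary is empty, so the codimension hypothesis is vacuously satisfied once $Gv$ is affine and normal), we get $\End_G(V)(v) = V^{G_v} = V^{\{e\}} = V$. Hence $\DDD_{\E}(v) = V$ for generic $v$, so $d(V) = \dim V$, and by Lemma~\ref{DenseOrbit.lem} (or directly by Lemma~\ref{main-lemma}, since $\E(v)$ is then a linear subspace of dimension $\dim V$, thus all of $V$) the semigroup $\End_G(V)$ has a dense — in fact the full — orbit at generic $v$. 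A small point to check is that a closed orbit of an affine algebraic group acting on a vector space is an affine normal variety isomorphic to $G/G_v = G$, which is smooth; this makes Proposition~\ref{Panyushev.prop} applicable.

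For (2): assume $G$ is semisimple and $d(V) = \dim V$. By Lemma~\ref{main-lemma}, for $v$ in a dense open set we have $\End_G(V)(v) = \DDD_{\E}(v) = V$. Now $\DDD_{\E}(v) = \VF_G(V)(v)$ by Lemma~\ref{G-invariantVF.lem}, so every $w \in V$ is the value at $v$ of a $G$-invariant vector field. The strategy is: first, deduce that $G_v$ is trivial for generic $v$ — since $\DDD_{\E}(v) = \VF_G(V)(v) \subset (T_v V)^{G_v} = V^{G_v}$ by the Proposition on stabilizers, $V^{G_v} = V$ forces $G_v$ to act trivially on $V$, and faithfulness of the module gives $G_v = \{e\}$. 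Second, deduce that $Gv$ is closed for generic $v$: because $G$ is semisimple, $\OOO(V)^G$ contains no nonconstant units and, more to the point, one argues that the generic orbit being non-closed would force a nonconstant $G$-invariant function to be constant along the family of orbits $\End_G(V)(v)$; since $\End_G(V)(v) = V$ for generic $v$, the invariants would separate no points, contradicting that a semisimple group acting with trivial generic stabilizer has $\dim V \quot G = \dim V - \dim G > 0$ unless $G$ is trivial — and if $G$ is nontrivial semisimple, the nonconstant invariants must be nonconstant on the generic fiber $\End_G(V)(v)$, but invariants are first integrals hence constant on $M(v) = \End_G(V)(v) = V$, forcing $\dim V \quot G = 0$, i.e. $\dim G = \dim V$, and then the generic orbit is dense and closed. (This is where I'd invoke the first-integral machinery: $\OOO(V)^G$ consists of first integrals for $\DDD_{\E}$, hence is constant on each $M(v)$; if $M(v) = V$ generically then $\OOO(V)^G = \bk$, which for semisimple $G$ acting with finite generic stabilizer means $\dim G = \dim V$ and the generic orbit is dense, so $Gv$ is the open orbit, hence closed as its complement has codimension $\geq 1$ and is $G$-stable — one checks closedness via Matsushima or directly since the complement is a proper closed $G$-stable subset of a homogeneous-on-an-open-set situation.)

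The main obstacle is the closedness of the generic orbit in part (2). Triviality of the generic stabilizer falls out cleanly from $(T_v V)^{G_v} = V$ plus faithfulness; but to get closedness I must genuinely use the interplay between $\OOO(V)^G$, the orbits $\End_G(V)(v)$, and semisimplicity — the key being that $G$-invariants are $\DDD_{\E}$-first integrals (so constant on $M(v) = \End_G(V)(v)$), whence $M(v) = V$ generically forces $\OOO(V)^G = \bk$; for a semisimple group a module with $\OOO(V)^G = \bk$ and finite generic stabilizer must be such that the generic (equivalently, the null-cone-avoiding) orbit is dense, and a dense orbit of a reductive group is automatically closed only when it exhausts $V$, which it does here. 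Making that last chain fully rigorous — in particular ruling out the generic orbit being dense-but-not-closed — is the delicate part, and I expect to lean on the fact that for semisimple $G$ the ring $\OOO(V)^G = \bk$ implies $V$ is a single orbit on a dense open set with the complement of codimension $\geq 2$ impossible unless that open set is all of $V$.
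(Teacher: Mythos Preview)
Your argument for part~(1) is correct and is exactly the paper's proof: a closed orbit with trivial stabilizer satisfies the hypotheses of Proposition~\ref{Panyushev.prop} trivially, giving $\End_G(V)(v)=V^{G_v}=V$.

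For part~(2), your deduction of the triviality of the generic stabilizer is also the paper's: $\End_G(V)(v)=V$ combined with $\End_G(V)(v)\subset V^{G_v}$ and faithfulness forces $G_v=\{e\}$. However, your attempt to \emph{prove} that the generic orbit is closed is both unnecessary and flawed. It is unnecessary because the paper simply invokes the standard fact (used again in the Corollary after Theorem~\ref{first-integrals.thm}) that for a \emph{semisimple} group $G$ acting linearly on $V$, the generic $G$-orbit is automatically closed; this is why semisimplicity is assumed in~(2). With that fact in hand, (2) reduces to the one-line stabilizer computation you already have.

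Your proposed route to closedness also contains a genuine error. You assert that $\OOO(V)^G$ consists of first integrals for $\DDD_\E=\VF_G(V)$, i.e.\ that $G$-invariant functions are constant on the sets $\E(v)=\End_G(V)(v)$. This is false: the zero map lies in $\End_G(V)$, so constancy on $\E(v)$ would force $f(v)=f(0)$ for every invariant $f$, which is absurd whenever nonconstant invariants exist. (More structurally: a first integral is annihilated by all $G$-invariant vector fields, whereas a $G$-invariant function is annihilated by the vector fields coming from $\Lie G$; these are entirely different conditions.) Consequently the chain ``$M(v)=V$ generically $\Rightarrow \OOO(V)^G=\bk$'' does not follow, and the subsequent dimension/closure discussion collapses. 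Replace all of this with the one-sentence appeal to the closedness of generic orbits under semisimple groups.
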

\begin{proof} Set $\E:=\End_{G}(V)$.
\par\smallskip
(1) If the orbit $Gv$ is closed and $G_{v}$ trivial, then $\E(v) =
V$ by Proposition~\ref{Panyushev.prop}.
\par\smallskip
(2) If $d_{\E}(V)=\dim V$, then, by the lemma above, we have $\E(v)
= V$ for all $v$ from a dense open subset $U \subset V$. Since
$\E(v) \subset V^{G_{v}}$ and since the action is faithful, we see
that $G_{v}$ is trivial for all $v \in U$, i.e. the generic
stabilizer is trivial. Since $G$ is semisimple this implies that the
generic orbits are closed, see
\cite[Corollary~1]{Po1970Criteria-for-the-s}.
\end{proof}
\begin{rem}
Example~\ref{diagonal.exa} shows that the assumption in (2) that $G$
is semisimple is necessary.
\end{rem}

\ps
%%%%%%%%%%%%%%%%%%%%%%%%%%%%%%%%%%%%%%%%%
\subsection{Invariance of the isotropy strata}\label{Inv-Luna-strat.subsec}
Let $G$ be a reductive group acting on an affine variety $X$, and
let $\pi\colon X \to Z:=X\quot G$ denote the algebraic quotient,
i.e. the morphism corresponding to the inclusion $\OOO(X)^G\into
\OOO(X)$. Then $\pi$ sets up a bijection between the closed orbits
of $X$ and the points of $X\quot G$.

Let $x\in X$ be  such that the orbit $Gx$ is closed. Then the
isotropy group $H:=G_x$ is reductive.
%and we can decompose the tangent space  $T_x X = T_xGx\oplus N_x$ where $N_x$ is a $G_x$-invariant subspace. We call the representation of $G_x$ on $N_x$ the {\it slice representation of $G_x$}.
The {\it isotropy stratum\/} $Z_H \subseteq Z$ consists of the
closed orbits whose isotropy groups are conjugate to the reductive
subgroup $H \subseteq G$.

%The {\it \name{Luna} strata} of $Z$ consist of the irreducible components of the various $Z_H$ such that the corresponding slice representations $(G_x:N_x)$ are isomorphic (after conjugating $G_x$ to $H$) to a fixed representation $W$ of $H$.

If $X$ is a $G$-module $V$ with quotient $Y:=V\quot G$, then the
isotropy strata are locally closed and irreducible. In fact, $Y_H$
is open (and dense) in the closed subset $\pi(V^H)\subset Y$, since
it is equal to $\pi(V^H)\setminus\bigcup_{L\supsetneqq H}Y_L$.
%and they coincide with the \name{Luna} strata. Moreover, $Z_L$ is contained in the closure of $Z_H$ if and only if $L$ contains a conjugate of $H$.

\begin{prop}\label{isotropy-strata.prop}
Assume that the canonical map $p_X\colon \VF_G(X) \to \VF(X\quot G)$
is surjective. Then the isotropy strata of the algebraic quotient
$X\quot G$ are stable under the connected component $\Aut(X\quot
G)^\circ$ of the automorphism group of $X\quot G$.
\end{prop}

\begin{proof}
(1) We first consider the case where $X$ is a $G$-module $V$ with
quotient $Y:=V\quot G$.
%the isotropy strata $V_H$ are irreducible, because $Z_H$ is open and dense in $\pi(V^H)$. . Then Moreover, $\pi(Z^H)$ is the closure of $Z_H$.
%which implies that a stratum $Z_L$ is contained in the closure of $Z_H$ if and only if $L$ contains a conjugate of $H$.
It is clear that a $G$-equivariant endomorphism $\phi\colon V \to V$
sends $V^H$ to $V^H$, and thus induces an endomorphism $\bar\phi$ of
the quotient $V\quot G$ such that $\bar\phi(\overline{Y_H})\subseteq
\overline{Y_H}$. It follows that these closures are
$\End_G(V)$-stable, hence invariant under $\VF_G(V)$, by
Proposition~\ref{E&D-invariance.prop} and
Lemma~\ref{G-invariantVF.lem}.

If  the canonical map $p_V\colon \VF_G(V) \to \VF(Y)$ is surjective,
then the closures of the strata $Y_H$ are invariant under $\VF(Y)$. Since
$\Lie\Aut(Y)$ is a Lie-subalgebra of $\VF(Y)$, it follows from
\cite[Proposition~7.2.6]{FuKr2015On-the-geometry-of} that the
closures $\overline{Y_H}$ are stable under $\Aut(Y)^\circ$. Since
the closure of every stratum is a finite union of strata we finally
get that the strata $Z_H$ are $\Aut(Z)^\circ$-stable. \ms (2) In
general, we can assume that $Z$ is a $G$-stable closed subset of a
$G$-module $V$, and so $Z:=X\quot G$ is a closed subset of
$Y:=V\quot G$. By definition, the isotropy strata of $Z$ are the
intersections $Z_H=Y_H\cap Z$. We have the following commutative
diagram where the horizontal maps are the restriction maps of vector
fields to closed subsets:
\[
\begin{CD}
\VF_G(V) @>{\res|^V_X}>{\text{\tiny surjective}}>  \VF_G(X) \\
@V{p_V}VV  @V{p_X}V{\text{\tiny surjective}}V \\
\VF(Y) @>{\res|^Y_Z}>> \VF(Z)
\end{CD}
\]
Since $V$ is a vector space, the restriction map $\VF(V) \to \VF(X)$
is surjective, hence $\res|_X^V$ is also surjective, because $G$ is
reductive. By assumption, $p_X$ is also surjective.

We have seen in (1) that the closures of the  isotropy strata of $Y$
are invariant under the image of $\VF_G(V)$ in $\VF(Y)$. Hence, the
closures of the isotropy strata of $Z$ are invariant under the image
of $\VF_G(V)$ in $\VF(Z)$ which is all of $\VF(Z)$ as we have just
seen. Now the claim follows as in (1).
\end{proof}
\begin{rem}
Proposition \ref{isotropy-strata.prop} is a variant of a stronger result of \name{Schwarz}
\cite{Sc2013Vector-fields-and-} (cf. Remark~\ref{Schwarz}) which
shows that under the same assumptions the (irreducible)
\name{Luna}-strata are permuted under the full automorphism group of
the quotient $X\quot G$. In his proof he shows that the vector
fields span the tangent spaces of the \name{Luna}-strata, and thus
an automorphism has to permute the strata of the same dimension.
\end{rem}
\ms
%%%%%%%%%%%%%%%%%%%%%%%%%%%%%%%%%%%%%%%%%%
\section{First integrals}
%%%%%%%%%%%%%%%%%%%%%%%%%%%%%%%%%%%%%%%%%%
\subsection{The field of first integrals}
Let $X$ be an irreducible affine variety, and let $\DDD \subset
\VF(X)$ be a linear subspace.
\begin{defn}
A {\it first integral of $\DDD$\/}  is a rational function $f \in
\bk(X)$ with the property that $\xi f = 0$ for all $\xi \in \DDD$.
If $X$ is a $G$-variety and $\DDD := \VF_{G}(X)$, then a first
integral of $\DDD$ will be called a {\it first integral for the
$G$-action on $X$}.
\end{defn}
It is easy to see that the first integrals of $\DDD$ form a subfield
of $\bk(X)$ which we denote by $\FFF_{\DDD}(X)$. If $\DDD =
\VF_{G}(X)$, then we write $\FFF_{G}(X)$ instead of
$\FFF_{\VF_{G}(X)}(X)$.

\ps From now on assume that $X$ is an irreducible affine variety,
and that $\DDD \subset \VF(X)$ is a linear subspace. We want to show
that the first integrals are the rational functions on a certain
``quotient'' of the variety $X$ which will be defined for the action
of a linear semigroup $\E \subseteq \End(X)$ in a similar way as the
quotient for the action of an algebraic group, see
Section~\ref{modE.subsec}.

\begin{lem}\label{first-integral.lem}
Let $f \in \bk(X)$ be a rational function. \be
\item
Assume that there is an open dense $U \subset X$ where $f$ is
defined and has the property that $f$ is constant on $M(x)\cap U$
for all $x \in U$. Then $f$ is a first integral of $\DDD$.
\item
Assume that $f$ is a first integral of $\DDD$. If $f$ is defined in
$x \in X$ and if $T_{x}M(x) = \DDD(x)$, then $f$ is constant on
$M(x)$. \ee
\end{lem}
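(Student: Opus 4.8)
The statement has two parts, and I would treat them as essentially dual to each other. For part (1), suppose $f$ is defined on the dense open set $U$ and is constant on each $M(x) \cap U$. Fix $\xi \in \DDD$ and a point $x \in U$ where both $f$ and $\xi f$ are defined; I want $(\xi f)(x) = 0$. The key observation is that $\xi(x) \in T_x M(x)$, since $M(x)$ is by definition $\DDD$-invariant. By the Remark following the definition of $\DDD$-invariance, evaluating $\xi f$ at $x$ only sees the restriction of $f$ to $M(x)$ near $x$: $(\xi f)(x) = \xi(x)(f|_{M(x) \cap U})$. But $f$ is constant on $M(x) \cap U$, so its differential there vanishes, and hence $(\xi f)(x) = 0$. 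Since this holds on a dense open subset of $X$ and $\xi f$ is a rational function, $\xi f = 0$ identically. As $\xi \in \DDD$ was arbitrary, $f$ is a first integral.

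For part (2), suppose $f$ is a first integral, $f$ is defined at $x$, and $T_x M(x) = \DDD(x)$. I would argue that $f|_{M(x)}$ is constant by showing its differential vanishes on a dense open subset of $M(x)$ and invoking irreducibility of $M(x)$ (Lemma~\ref{D-invariant.lem}(5)). First, on $M(x)$ consider the function $x' \mapsto \dim \DDD(x')$; by Lemma~\ref{lower-semi-cont.lem} the locus where this attains its generic maximum $d_\DDD(M(x))$ is open and dense, and intersecting with the locus where $f$ is defined gives a dense open $U' \subset M(x)$. I claim $T_{x'} M(x') = \DDD(x')$ for $x' \in U'$: indeed $M(x') \subseteq M(x)$ (since $M(x')$ is the smallest $\DDD$-invariant subvariety through $x'$, and $M(x)$ is one such as soon as $x' \in M(x)$), so $\dim T_{x'} M(x') \le \dim T_{x'} M(x)$; combined with $\DDD(x') \subseteq T_{x'} M(x')$ and a dimension count using that $x'$ is a generic point, equality follows — this is exactly the bootstrapping argument already used in the proof of Lemma~\ref{main-lemma}(d). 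Then for each $x' \in U'$ and each tangent direction $v \in T_{x'} M(x') = \DDD(x')$, write $v = \xi(x')$ for some $\xi \in \DDD$; since $f$ is a first integral, $d_{x'}(f|_{M(x)})(v) = \xi(x')f = (\xi f)(x') = 0$. So the differential of $f|_{M(x)}$ vanishes on the dense open $U'$, hence $f|_{M(x)}$ is constant (characteristic zero, $M(x)$ irreducible), and in particular its value at $x$ is that constant, i.e. $f$ is constant on $M(x)$.

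The step I expect to be the most delicate is the claim $T_{x'}M(x') = \DDD(x')$ for generic $x' \in M(x)$ in part (2) — or more precisely, making sure the hypothesis "$T_x M(x) = \DDD(x)$" at the single point $x$ can actually be propagated (or isn't even needed in that form) to get $f|_{M(x)}$ constant. One must be careful that $M(x)$ may be singular at $x$, so the hypothesis $T_x M(x) = \DDD(x)$ is about the Zariski tangent space and should not be confused with smoothness; the argument above sidesteps this by working at generic smooth points $x'$ where the dimension estimates are clean, and only uses irreducibility of $M(x)$ at the end. I would also double-check the subtlety that "$f$ constant on $M(x) \cap U$" in part (1) is a statement about the possibly-reducible-looking trace $M(x) \cap U$, but since $M(x)$ is irreducible this is unambiguous. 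Modulo these points, both directions are short and follow the pattern of Lemma~\ref{main-lemma}.
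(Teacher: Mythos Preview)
Part~(1) is correct and matches the paper's argument exactly.

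For part~(2) your overall strategy --- a dimension count at generic smooth points of $M(x)$ --- is the paper's, but there is a genuine gap in what you actually establish. To conclude that $f|_{M(x)}$ is constant you need the differential of $f|_{M(x)}$ to vanish on all of $T_{x'}M(x)$ for generic smooth $x'\in M(x)$, not merely on $T_{x'}M(x')$. Your chain $\DDD(x')\subseteq T_{x'}M(x')\subseteq T_{x'}M(x)$ is correct, but the vague ``dimension count using that $x'$ is a generic point'' does not, as written, force equality throughout; and you then only evaluate the differential on $T_{x'}M(x')=\DDD(x')$, which is a priori a proper subspace of $T_{x'}M(x)$. The detour through $M(x')$ and the appeal to Lemma~\ref{main-lemma}(d) --- which relies on the linear-semigroup setting and is not available here --- obscure rather than prove the point.

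What is missing is precisely the use of the hypothesis $T_{x}M(x)=\DDD(x)$. Set $d:=\max_{y\in M(x)}\dim\DDD(y)$. The hypothesis gives $\dim M(x)\le\dim T_{x}M(x)=\dim\DDD(x)\le d$. On the other hand, at a smooth point $x'$ of $M(x)$ where $\dim\DDD(x')=d$, one has $d=\dim\DDD(x')\le\dim T_{x'}M(x)=\dim M(x)$. Hence $d=\dim M(x)$ and $\DDD(x')=T_{x'}M(x)$ for such $x'$, which is exactly what you need. This is how the paper argues, working directly with $M(x)$ throughout and never mentioning $M(x')$; once you insert this step, your argument becomes the paper's.
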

\begin{proof}
(1) Since $M(x)$ is $\DDD$-invariant we have $\xi(x) \in
T_{x}\,M(x)$ for all $x \in U$ and all $\xi \in \DDD$. Hence we have $(\xi
f)(x) = \xi(x) f = \xi(x) f|_{M(x)\cap U} = 0$, because
$f|_{M(x)\cap U}$ is constant, and so $\xi f = 0$ for all $\xi \in
\DDD$.
\par\smallskip
(2) There is $d\geq 0$ such that $\dim \DDD(y)\leq d$ for all $y \in
M(x)$, with equality on a dense open set $M' \subset M(x)$
(Lemma~\ref{lower-semi-cont.lem}). In particular, $\dim M(x) \leq
\dim T_{x}M(x) = \dim \DDD_{x} \leq d$. On the other hand,
$\DDD_{y}\subset T_{y}M(x)$ for all $y \in M(x)$. We can assume that
$M'$ consists of smooth point of $M(x)$. Then, for every $y \in M'$,
we get $d=\dim \DDD_{y}\leq \dim T_{y}M(x) = \dim M(x)$. Hence $d =
\dim M(x)$, and so $T_{y}M(x)=\DDD(y)$ for all $y\in M'$. Since $f$
is defined in $x$, it is defined in a dense open set $M''\subset
M'$. But then $f|_{M''}$ is constant, because $\delta f=0$ for all
$u \in M''$ and all $\delta \in T_{u}M(x)$.
\end{proof}
\begin{rem}\label{first-integral.rem}
If $\E \subset \End(X)$ is a linear semigroup and $\DDD:=\DDD_{\E}$,
then a rational function $f\in\bk(X)$ defined on an open set $U
\subset X$ is a first integral for $\DDD$ if and only if $f$ is
constant on $\E(x)\cap U$ for all $x \in U$. This follows from the
lemma above, because in this case we have $\E(x) = M(x)$ and
$T_{x}M(x) = \DDD(x)$ for all $x \in X$, by
Theorem~\ref{main-theorem-1}.
\end{rem}

Now  choose a closed embedding $X\subset V$ into a vector space $V$.
We know from Lemma~\ref{lower-semi-cont.lem} that the subset $X':=\{x\in X \mid
\dim \DDD(x)=d_\DDD(x)\}$ is open and dense in $X$. Consider the map
$$
\pi\colon X' \to \Gr_{d_{\DDD}(X)}(V) \text{ given by }\pi(x) :=
\DDD(x)\subset T_{x}X \subset V.
$$
\begin{lem}
The map $\pi\colon X' \to \Gr_{d_\DDD(x)}(V)$ is a morphism of
varieties.
\end{lem}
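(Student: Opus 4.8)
The plan is to check that $\pi$ is a morphism in a neighbourhood of each point of $X'$, using the standard principle that a map into a Grassmannian is a morphism as soon as, locally on the source, the varying subspace is the span of finitely many vectors whose coordinates are regular functions. Write $r:=d(X)=d_{\DDD}(X)$. First I would recall that, under the closed embedding $X\subset V$, one has $T_xX\subset T_xV=V$ for every $x\in X$, the $j$th coordinate of a tangent vector $t\in T_xX$ being $t(x_j)$, where $x_1,\dots,x_n$ are the coordinate functions of $V$. Hence, for any vector field $\xi\in\VF(X)=\Der(\OOO(X))$, the evaluation $x\mapsto\xi(x)$ is a morphism $X\to V$, because its components are the regular functions $\xi x_1,\dots,\xi x_n\in\OOO(X)$. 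In particular $\DDD(x)=\{\xi(x)\mid\xi\in\DDD\}$ is genuinely a subspace of the fixed vector space $V$, of dimension $r$ for $x\in X'$, so $\pi$ does take values in $\Gr_r(V)$; it remains to check regularity.

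Next I would fix $x_0\in X'$. Since $\dim\DDD(x_0)=r$, I can pick $\xi_1,\dots,\xi_r\in\DDD$ with $\xi_1(x_0),\dots,\xi_r(x_0)$ linearly independent in $V$. Consider the $r\times n$ matrix $\mathbf M(x)$ whose $i$th row is $\xi_i(x)$; its entries lie in $\OOO(X)$, hence so do its $r\times r$ minors $m_1,\dots,m_N$. Let $U:=\{x\in X'\mid m_k(x)\neq 0\text{ for some }k\}$, an open subset of $X$ containing $x_0$ (it is exactly where $\xi_1(x),\dots,\xi_r(x)$ are linearly independent). For $x\in U$ the span $\langle\xi_1(x),\dots,\xi_r(x)\rangle$ is an $r$-dimensional subspace of $\DDD(x)$, and $\dim\DDD(x)=r$ there, so the two coincide; thus $\pi|_U$ sends $x$ to $\langle\xi_1(x),\dots,\xi_r(x)\rangle$.

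Finally I would realize $\pi|_U$ through the Plücker embedding: the assignment $x\mapsto[\xi_1(x)\wedge\cdots\wedge\xi_r(x)]$ defines a morphism $U\to\PP(\bigwedge^{r}V)$ (its homogeneous coordinates are precisely the minors $m_k$, which are regular on $X$ and do not vanish simultaneously on $U$), it lands in the closed subvariety $\Gr_r(V)\subset\PP(\bigwedge^{r}V)$, and it equals $\pi|_U$. Since being a morphism is local on the source and the open sets $U$ obtained in this way cover $X'$ as $x_0$ ranges over $X'$, it follows that $\pi\colon X'\to\Gr_r(V)$ is a morphism. The only slightly delicate point is this last identification — that the span of the regularly varying vectors $\xi_i(x)$ gives a morphism to $\Gr_r(V)$, and that on $U$ this span is all of $\DDD(x)$ — but both are immediate consequences of the constancy of $\dim\DDD(x)$ on $X'$, so there is no real obstacle.
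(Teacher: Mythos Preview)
Your argument is correct and follows essentially the same route as the paper: choose locally $\xi_{1},\dots,\xi_{r}\in\DDD$ giving a basis of $\DDD(x_{0})$, observe that they remain a basis of $\DDD(x)$ on an open neighbourhood (you phrase this via the nonvanishing of some $r\times r$ minor, the paper just invokes openness), and then use the Pl\"ucker embedding to see that $x\mapsto[\xi_{1}(x)\wedge\cdots\wedge\xi_{r}(x)]$ is a morphism. Your write-up simply makes explicit a few details the paper leaves to the reader, such as why $x\mapsto\xi(x)$ is a morphism $X\to V$ and why the local span coincides with $\DDD(x)$ on $U$.
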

\begin{proof}
We will use the Pl\"ucker-embedding $\Gr_{d}(V) \into
\PP(\bigwedge^{d}V)$, $d := d_\DDD(x)$. For  $x \in X'$ choose
$\xi_{1},\ldots,\xi_{d} \in \DDD$ such that
$\xi_{1}(x),\ldots,\xi_{d}(x)$ is a basis of $\DDD(x)$. Then
$\DDD(x)= \xi_{1}(x)\wedge
\xi_{2}(x)\wedge\dots\wedge\xi_{d}(x)\in\bigwedge^{d}V$. It follows
that  there is an open neighborhood $U_{x}\subset X'$ of $x$ such
that $\xi_{1}(u),\ldots,\xi_{d}(u)$ is a basis of $\DDD(u)$ for all
$u \in U_{x}$. Since $\pi(u) =
[\xi_{1}(u)\wedge\cdots\wedge\xi_{d}(u)] \in \PP(\bigwedge^{d}V)$ we
see that $\pi|_{U_{x}}$ is a morphism, and  the claim follows.
\end{proof}

\ps
%%%%%%%%%%%%%%%%%%%%%%%%%%%%%%%%%%%%%
\subsection{The quotient mod $\E$}\label{modE.subsec}
Let $\E \subset \End(V)$ be a linear semigroup, and let $\DDD_{\E}
\subset \VF(V)$ denote the image of $T_{\id}\E = \E$. Let $X \subset
V$ be a closed irreducible $\E$-stable subvariety. Under these
assumptions we have $\E(x)=M(x)=\DDD_{\E}(x) \subset V$  for all $x
\in X$ (Lemma~\ref{main-lemma}). As above, define
$$
X':=\{x\in X\mid \dim \E(x) = d_{\E}(X)\},
$$
and consider the morphism  $\pi\colon X' \to \Gr_{d_\E(x)}(V)$,
$x\mapsto \E(x)\subset T_{x}X \subset V$.

\begin{prop}\label{quotient-mod-E.prop}
\leavevmode
\be
\item
For all $x \in X'$ we have $\pi^{-1}(\pi(x)) = \E(x)\cap X'$.
\item
$\pi$ induces an isomorphism $\pi^{*}\colon \bk(\overline{\pi(X')})
\simto \FFF_{\DDD_{\E}}(X)$.
\item
We have $\tdeg_{\bk}\FFF_{\DDD_{\E}}(X) = \dim X - d_{\E}(X)= \dim
\overline{\pi(X')}$.
\item
$\FFF_{\DDD_{\E}}(X) = \bk$ if and only if $d_{\E}(X) = \dim X$, and
then $X \subset V$ is a linear subspace. \ee
\end{prop}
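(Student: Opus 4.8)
The plan is to treat assertion (1) as the geometric core and to deduce (2)--(4) from it together with Lemma~\ref{main-lemma}, Theorem~\ref{main-theorem-1} and Remark~\ref{first-integral.rem}. Throughout write $Z:=\overline{\pi(X')}$, an irreducible closed subvariety of the Grassmannian, so that $\pi^{*}\colon\bk(Z)\to\bk(X')=\bk(X)$ is a well-defined embedding of fields. For (1): if $y\in X'$ with $\pi(y)=\pi(x)$, then $\E(y)=\E(x)$ as linear subspaces of $V$, and since $\id\in\E$ (Remark~\ref{closed-cone.rem}) we get $y\in\E(y)=\E(x)$, hence $y\in\E(x)\cap X'$. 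Conversely, if $y\in\E(x)\cap X'$, say $y=\psi(x)$ with $\psi\in\E$, then $\phi(y)=(\phi\circ\psi)(x)\in\E(x)$ for every $\phi\in\E$, so $\E(y)\subseteq\E(x)$; as $x,y\in X'$ both sides are vector spaces of dimension $d(X)$ (Lemma~\ref{main-lemma}), whence $\E(y)=\E(x)$ and $\pi(y)=\pi(x)$. Thus the fiber of $\pi$ over $\pi(x)$ is $\E(x)\cap X'$, a non-empty (it contains $x$) open subset of the irreducible linear subspace $\E(x)$, hence irreducible of dimension $d(X)$.

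For (2), the embedding $\pi^{*}$ is automatically injective, and for $g\in\bk(Z)$ the function $\pi^{*}g=g\circ\pi$ is constant on every fiber of $\pi$, hence on $\E(x)\cap X'$ for all $x$ in its dense open domain, so $\pi^{*}g$ is a first integral of $\DDD_{\E}$ by Remark~\ref{first-integral.rem}; therefore $\pi^{*}\bk(Z)\subseteq\FFF_{\DDD_{\E}}(X)$. For the reverse inclusion take a first integral $f$; on the dense open subset of $X'$ where $f$ is regular form the graph morphism $\psi:=(\pi,f)$ with closed image $W\subseteq Z\times\PP^{1}$, and let $q\colon W\to Z$ be the first projection. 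Then $q$ is dominant --- the image under $\pi$ of any dense open subset of $X'$ is again dense in $Z$, by a fiber-dimension count using that all fibers of $\pi$ over $\pi(X')$ have dimension $d(X)$ --- and $q$ is generically injective: over a general point $z=\pi(x)$ of $\pi(X')$ the first integral $f$ is constant on $\pi^{-1}(z)=\E(x)\cap X'$ (Remark~\ref{first-integral.rem} and part (1)), so $q^{-1}(z)$ consists of a single reduced point, $\bk$ having characteristic $0$. Hence $q$ is birational and $\bk(W)=q^{*}\bk(Z)$; since $f$ is by construction the restriction to $X'$ of the pull-back along $\psi$ of the coordinate function on $\PP^{1}$, and $\psi$ factors through $W$, we conclude $f\in\pi^{*}\bk(Z)$. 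Thus $\pi^{*}\colon\bk(Z)\simto\FFF_{\DDD_{\E}}(X)$.

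For (3): by (2) we have $\tdeg_{\bk}\FFF_{\DDD_{\E}}(X)=\tdeg_{\bk}\bk(Z)=\dim Z$; applying the fiber-dimension theorem to the dominant morphism $\pi\colon X'\to Z$, whose fibers over the dense set $\pi(X')$ all have dimension $d(X)$ by (1), gives $\dim X=\dim X'=\dim Z+d(X)$, so $\dim Z=\dim X-d(X)$. For (4): since $\bk$ is algebraically closed in $\bk(X)$, the field $\FFF_{\DDD_{\E}}(X)$ equals $\bk$ if and only if $\tdeg_{\bk}\FFF_{\DDD_{\E}}(X)=0$, which by (3) is equivalent to $d(X)=\dim X$; in that case choose $x\in X'$, and note that $\E(x)$ is (Lemma~\ref{main-lemma}) a linear subspace of $V$ contained in the $\E$-stable set $X$, with $\dim\E(x)=d(X)=\dim X$, so $\E(x)=X$ and $X$ is a linear subspace of $V$ (cf.\ also Lemma~\ref{DenseOrbit.lem}).

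The step I expect to cost the most effort is the reverse inclusion in (2): the graph/birationality argument is classical, but it requires one to restrict to compatible dense open subsets so that ``general fiber of $\pi$'' and ``domain of $f$'' match up, and to do the fiber-dimension bookkeeping that both keeps the $\pi$-image dense in $Z$ and guarantees that the general fiber of $q\colon W\to Z$ really is one point. All of this is routine but is where the genuine work sits; the remaining assertions follow formally from Lemma~\ref{main-lemma}, Theorem~\ref{main-theorem-1} and the description of the fibers of $\pi$ established in (1).
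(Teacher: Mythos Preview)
Your proof is correct and follows the same overall architecture as the paper: establish the fiber description (1) from Lemma~\ref{main-lemma}, use it together with Remark~\ref{first-integral.rem} to identify first integrals with functions of $\pi$, and read off (3)--(4) from fiber dimension and Lemma~\ref{DenseOrbit.lem}.

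The only notable divergence is in the reverse inclusion of (2). The paper shrinks $X'$ to a dense open $U$ on which $\pi$ is smooth with locally closed smooth image (generic smoothness in characteristic~$0$) and then invokes the standard fact that for a smooth surjection the regular functions constant on the fibers are exactly the pullbacks from the base; this dispatches the surjectivity of $\pi^{*}$ in one line. You instead run the classical graph argument: form $W=\overline{(\pi,f)(U)}\subset Z\times\PP^{1}$, check that the first projection $q\colon W\to Z$ is dominant (fiber-dimension bookkeeping) and generically injective (because $f$ is constant on $\E(x)\cap U$ by Remark~\ref{first-integral.rem}), hence birational in characteristic~$0$, and conclude $f\in\pi^{*}\bk(Z)$. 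Both arguments are standard and equivalent in strength; the paper's is terser but leans on a black-box descent statement, while yours is self-contained at the cost of the extra open-set and dimension bookkeeping you flagged. One small remark: your invocation of Remark~\ref{closed-cone.rem} for $\id\in\E$ is fine, but note that $y\in\E(y)$ already follows from $\E(y)=M(y)$ and the defining property $y\in M(y)$, so you could bypass that reference entirely.
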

The proposition shows that the orbits on the open subvariety $X'
\subseteq X$, i.e. the subsets $\E(x)\cap X'$, are disjoint and are
the fibers of the morphism  $\pi\colon X' \to \Gr_{d_\E(x)}(V)$.
Therefore, we will use the notion $X\quot \E$ for the closure
$\overline{\pi(X')}$ and call it  the {\it quotient of $X$} under
the action of the semigroup $\E$ of endomorphisms.
\begin{proof}
(1) For $y\in \E(x)\cap X'$ we have $\E(y)=\E(x)$, hence $\pi(y) =
\pi(x)$. If $y \in X' \setminus \E(x)$, then $\E(y) \neq \E(x)$ and
so $\pi(y)\neq\pi(x)$.
\par\smallskip
(2) By Remark~\ref{first-integral.rem} a rational function $f \in
\bk(X)$ defined on an open set $U\subset X'$ is a first integral if
and only if it is constant on the subsets $\E(x)\cap U$ for all $x
\in U$. We can assume that $\pi(U) \subset \Gr_{d_\E(x)}(V)$ is
locally closed and smooth and that $\pi\colon U \to \pi(U)$ is
smooth.  Then it is a well-known fact that $\pi^{*}(\OOO(\pi(U)))
\subset \OOO(U)$ are the regular functions on $U$ which are constant
on the fibers.
\par\smallskip
(3) This is clear. \ps (4) If $d_{\E}(X) = \dim X$, then $X = \E(x)$
for a generic $x\in X$ (Lemma~\ref{DenseOrbit.lem}), and so $X$ is a
linear subspace of $V$.
\end{proof}
\begin{cor}
If $X \subset V$ is not a linear subspace, then there exist
non-constant first integrals.
\end{cor}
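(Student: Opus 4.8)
The plan is to obtain this corollary as the contrapositive of Proposition~\ref{quotient-mod-E.prop}(4), so essentially no new work is needed; I will simply assemble the already-proven facts. Arguing by contradiction, suppose $X$ carries no non-constant first integral, i.e. $\FFF_{\DDD_{\E}}(X) = \bk$. First I would apply part (3) of that proposition: it gives $\dim X - d(X) = \tdeg_{\bk}\FFF_{\DDD_{\E}}(X) = \tdeg_{\bk}\bk = 0$, hence $d(X) = \dim X$.

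With $d(X) = \dim X$ in hand, I would invoke Lemma~\ref{DenseOrbit.lem} (the equivalence of (ii) and (iii)) to produce a point $x \in X$ with $\E(x) = X$. Since $\E$ is a linear semigroup, Theorem~\ref{main-theorem-1}(2) — or directly Lemma~\ref{main-lemma}, applied to the lift $\tilde\E \subset \End(V)$ — tells us that the orbit $\E(x)$ is, inside $V$, a linear subspace. Therefore $X = \E(x)$ is a linear subspace of $V$, contradicting the hypothesis. Hence $\FFF_{\DDD_{\E}}(X) \neq \bk$, which is exactly the claim.

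I do not expect any genuine obstacle here: all three ingredients — the transcendence-degree formula, the dense-orbit dichotomy, and the linearity of $\E$-orbits for a linear semigroup — are already established, and the standing hypotheses fixed at the start of the subsection ($\E \subset \End(V)$ a linear semigroup, $X \subset V$ a closed irreducible $\E$-stable subvariety) are precisely those under which Proposition~\ref{quotient-mod-E.prop} was proved, so nothing extra must be assumed. The only point worth a second glance is purely bookkeeping: that the $\DDD$ of Section~5 specializes correctly to $\DDD_{\E}$ here, so that "first integral of $\DDD_{\E}$" in the corollary means "first integral for the $\E$-action" in the sense of Remark~\ref{first-integral.rem}; once that is noted, the argument above is complete.
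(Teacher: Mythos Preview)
Your proposal is correct and matches the paper's intent: the corollary is stated without proof immediately after Proposition~\ref{quotient-mod-E.prop}, as the direct contrapositive of part~(4). You actually do a bit more than necessary by re-deriving the content of~(4) via part~(3), Lemma~\ref{DenseOrbit.lem}, and Theorem~\ref{main-theorem-1}(2), whereas simply citing~(4) (which already asserts that $\FFF_{\DDD_{\E}}(X)=\bk$ forces $X$ to be a linear subspace) suffices.
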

Note that if $X$ is smooth, then it is a linear subspace, because
$X$ is a closed cone, see Remark~\ref{closed-cone.rem}.

\begin{exa}\label{kId.exa}
Let $X \subset V$ be a closed cone, and let $\E:=\bk\cdot\id_{V}
\subset \End(V)$. Then $\E(x) = \bk x$ for all $x \in X$, hence
$X\quot \E = \PP(X)$ and $\FFF_{\DDD_{\E}}(X) = \bk(\PP(X))$.
\end{exa}

\ps
%%%%%%%%%%%%%%%%%%%%%%%%%%%%%%%%%%%%%
\subsection{The symmetric case}
Assume that $V$ is a representation of an algebraic group and that
$\E := \End_{G}(V)$, hence $\DDD_{\E} = \VF_{G}(V)$. Then, for every
$G$-stable and $\VG$-symmetric closed irreducible subvariety $X
\subset V$, the open subset $X' \subset X$ is $G$-stable and the
morphism $\pi\colon X' \to \Gr_{d_\E(x)}(V)$ is $G$-equivariant. In
particular, $\pi^{*}\colon \bk(\overline{\pi(X')}) \simto
\FFF_{G}(X)$ is a $G$-equivariant isomorphism. It follows that for
any $x\in X'$ we have
$$
G_{\pi(x)} = \Norm_{G}(\E(x))
$$
where $\Norm_{G}(W)$ denotes the normalizer in $G$ of the subspace
$W \subset V$.

\begin{lem}\label{first-integrals.lem}
\leavevmode
\be
\item
For $x \in X'$ we have
$$
\tdeg\FFF_{G}(X) \geq \tdeg\FFF_{G}(X)^{G} + \dim G - \dim
\Norm_{G}(\E(x))
$$
with equality on a dense open set  $U \subset X'$.
\item
If $\FFF_{G}(X)^{G}=\bk$, then $\FFF_{G}(X)$ is $G$-isomorphic to
$\bk(G/\Norm_{G}(\E(x))$ for any $x$ in a dense open set of $X'$.
\ee
\end{lem}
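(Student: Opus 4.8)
The plan is to transport the whole statement to the $G$-variety $Z:=X\quot\E=\overline{\pi(X')}$ and deduce everything from Rosenlicht's theorem on rational invariants.

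First I would collect what the paragraph preceding the lemma already supplies. Since $\pi\colon X'\to\Gr_{d(X)}(V)$ is $G$-equivariant, $Z$ is an irreducible $G$-variety (the closure of the $G$-stable set $\pi(X')$), $\pi^{*}$ identifies $\FFF_{G}(X)$ with $\bk(Z)$ as $G$-fields (so in particular $\FFF_{G}(X)^{G}\cong\bk(Z)^{G}$), and $G_{\pi(x)}=\Norm_{G}(\E(x))$ for every $x\in X'$. Moreover $\tdeg\FFF_{G}(X)=\dim Z$ by Proposition~\ref{quotient-mod-E.prop}(3). Next I would bring in two standard facts about the $G$-variety $Z$: by upper semicontinuity of $z\mapsto\dim G_{z}$ there is a dense open $G$-stable subset $Z_{0}\subset Z$ on which this dimension is minimal, and by Rosenlicht's theorem (see e.g.\ \cite{Kr2014Algebraic-Transfor}) $\tdeg_{\bk}\bk(Z)^{G}=\dim Z-d_{\max}$, where $d_{\max}:=\max_{z\in Z}\dim Gz=\dim G-\min_{z\in Z}\dim G_{z}$. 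Combining,
$$
\tdeg\FFF_{G}(X)-\tdeg\FFF_{G}(X)^{G}=\dim Z-(\dim Z-d_{\max})=\dim G-\min_{z\in Z}\dim G_{z}.
$$

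For part (1), I would note that for any $x\in X'$ upper semicontinuity gives $\dim\Norm_{G}(\E(x))=\dim G_{\pi(x)}\ge\min_{z}\dim G_{z}$, hence $\dim G-\dim\Norm_{G}(\E(x))\le\dim G-\min_{z}\dim G_{z}=\tdeg\FFF_{G}(X)-\tdeg\FFF_{G}(X)^{G}$, which is the asserted inequality; and it becomes an equality precisely when $\pi(x)\in Z_{0}$, i.e.\ on $U:=\pi^{-1}(Z_{0})\cap X'$, which is open and dense in $X'$ because $\pi(X')$ is dense in $Z$ and $X'$ is irreducible. For part (2), $\FFF_{G}(X)^{G}=\bk$ forces $\dim Z=d_{\max}$ by the displayed formula, so the generic $G$-orbit in $Z$ is dense; for $x\in U$ and $z:=\pi(x)$ the orbit map gives, in characteristic $0$, a $G$-isomorphism $G/\Norm_{G}(\E(x))=G/G_{z}\simto Gz$ with $Gz$ open and dense in $Z$, whence $\bk(Z)=\bk(Gz)=\bk(G/\Norm_{G}(\E(x)))$ as $G$-fields, and transporting along $\pi^{*}$ finishes the proof.

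The only point that is not pure bookkeeping is keeping the two ``generic'' loci straight --- the minimal-stabilizer locus $Z_{0}$ downstairs versus the set $X'$ upstairs --- and making sure the semicontinuity estimate has the right sign, so that it produces the inequality ``$\ge$'' in (1) rather than ``$\le$''. Everything else is a formal consequence of Proposition~\ref{quotient-mod-E.prop}, the $G$-equivariance of $\pi$, and the standard form of Rosenlicht's theorem on rational invariants.
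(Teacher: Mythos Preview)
Your argument is correct and is essentially the paper's own proof: both transport the question to $Z=\overline{\pi(X')}$ via the $G$-equivariant isomorphism $\pi^{*}\colon\bk(Z)\simto\FFF_{G}(X)$, invoke \name{Rosenlicht}'s theorem on $Z$, and use $G_{\pi(x)}=\Norm_{G}(\E(x))$ together with semicontinuity of orbit dimension. The only cosmetic difference is that the paper phrases Rosenlicht via the existence of a geometric quotient $O\to O/G$ on an open $O\subset\pi(X')$, whereas you use the equivalent transcendence-degree formula $\tdeg\bk(Z)^{G}=\dim Z-d_{\max}$; your bookkeeping on the density of $U=\pi^{-1}(Z_{0})\cap X'$ is a welcome clarification.
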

\begin{proof}
(1) By \name{Rosenlicht}'s theorem (see
\cite[Satz~2.2]{Sp1989Aktionen-reduktive}) there is an open dense
$G$-stable subset $O \subset \pi(X')$ which admits a geometric
quotient $q \colon O \to O/G$. In particular, the fibers of $q$  are
$G$-orbits and have all the same dimension. Hence $\tdeg\FFF_{G}(X)
= \dim O = \dim O/G + \dim Gu$ for $u \in O$, and we also have the equality $\bk(O/G) =
\bk(O)^{G}=\FFF_{G}(X)^{G}$. If $u = \pi(x)$, then $G_{u}=
\Norm_{G}(\E(x))$ and so
$$
\tdeg\FFF_{G}(X) = \tdeg\FFF_{G}(X)^{G} + \dim G - \dim
\Norm_{G}(\E(x))
$$
for all $x \in U:=\pi^{-1}(O)$. Since $\dim Gu$ is maximal for $u
\in O$ the claim follows.
\par\smallskip
(2) If $\FFF_{G}(X)^{G}=\bk$, then, as a consequence of
\name{Rosenlicht}'s theorem, $G$ has a dense orbit $Gu$ in
$\overline{\pi(X')}$ and so $\FFF_{G}(X) = \bk(Gu)$. If $u =
\pi(x)$, then $Gu \simeq G/\Norm_{G}(\E(x))$, and the claim follows.
\end{proof}

\begin{rem}
Note that $\FFF_{G}(X)^{G}=\bk$ if and only if $G \E(x)$ is dense in
$X$ for a generic $x \in X$, or, equivalently, $\dim X =  d_\E(x) +
\dim G - \dim \Norm_{G}(\E(x))$ for a generic $x \in X$.
\end{rem}

\begin{exa}\label{exa2}
Consider the adjoint representation of $\GL_{2}$ on $\M_{2}$. Then
$\M_{2}' = \M_{2}\setminus \bk I_{2}$ where
$I_{2}=\left[\begin{smallmatrix}1&0\\0&1\end{smallmatrix}\right]$,
and the morphism $\pi$ is equal to the composition
$$
\pi\colon \M_{2}' \onto (\M_{2}/\bk I_{2}) \setminus \{0\} \onto
\PP(\M_{2}/\bk I_{2}).
$$
Choosing the basis
$\overline{\left[\begin{smallmatrix}0&1\\0&0\end{smallmatrix}\right]}$,
$\overline{\left[\begin{smallmatrix}0&0\\1&0\end{smallmatrix}\right]}$,
$\overline{\left[\begin{smallmatrix}1&\phantom{-}0\\0&-1\end{smallmatrix}\right]}$
of $\M_{2}/\bk I_{2}$, the pullbacks of the coordinate functions are
$b,c,\frac{a-d}{2}$, and so $\FFF_{\GL_{2}}(\M_{2}) =
\bk(\frac{a-d}{b},\frac{a-d}{c})$.
\end{exa}
\begin{exa}
For the adjoint representation of $\GL_{n}$ on $\M_{n}$ we claim
that $\GL_{n}$ has a dense orbit in $\pi(\M_{n}')$. In fact, let $S
\in \M_{n}$ be a generic diagonal matrix. Then the span
$\E(S)=\sum_{i=0}^{n-1}\bk S^{i}$ has dimension $n$, hence it is the
subspace of diagonal matrices, and so $\GL_{n}\E(S) \subset \M_{n}$
is the dense subset of all diagonalizable matrices. Moreover,  the
normalizer of $\E(S)$ is equal to $N$, the normalizer of the
diagonal torus $T \subset \GL_{n}$, and so
$\FFF_{\GL_{n}}(\M_{n})\simeq \bk(\GL_{n}/N)$.
\end{exa}

\begin{exa}
The previous example carries over to the adjoint representation of
an arbitrary semisimple group $G$ on its Lie algebra $\gg:=\Lie G$.
If $s \in \gg$ is a regular semisimple element, then the orbit $Gs$
is closed and the stabilizer of $s$ is a maximal torus $T$. This
implies by Proposition~\ref{Panyushev.prop} that $\E(s) =
\gg^{T}=\Lie T$ which is a Cartan subalgebra of $\gg$. Again, $G
\E(s)\subset \gg$ is the dense set of semisimple elements of $\gg$,
and the normalizer of $\E(s)$ is equal to $N$, the normalizer of $T$
in $G$. Hence $\FFF_{G}(\Lie G) \simeq \bk(G/N)$.
\end{exa}

In the examples above there are no $G$-invariant first integrals:
$\FFF_{G}(X)^{G}=\bk$. This is not always the case as the next two
examples show. However, it holds for a representation of a reductive
group $G$ in case the generic fiber of the quotient map contains a
dense orbit (Proposition~\ref{FGinvariantsAreTrivial.prop}).

\begin{exa} Suppose that $U\subset\GL(V)$ is unipotent and that the generic stabilizer of the action of $U$ on $V$ is trivial.
Then it follows from a result of Domokos \cite[Theorem~1.1,
p.840]{Do2008Covariants-and-the} that $\FFF_{U}(V)\simeq\bk$. In
this example we look at an instance where the generic stabilizer is
not trivial.

Let $U=\left\{\g a b c\mid a,b,c\in\bk\right\}\subset\GL_{3}(\bk)$ be the
unipotent group of upper triangular matrices, and consider the
adjoint representation of $U$ on its Lie algebra $\uu:=\Lie U =
\left\{\vv xyz\mid x,y,z\in\bk\right\}$. For $u=\g abc \in U$ and $v=\vv xyz
\in \uu$ we find
\[\tag{$**$}
\Ad(u) v = u v u^{-1} = \begin{bmatrix} 0 & x & -c x + y + a z\\
&0&z\\&&0\end{bmatrix}
\]
which shows that the fixed points are $\uu^{U}=\bk\vv 010$ and the
other orbits are the parallel lines $\Ad(U)\vv xyz =\vv
x0z+\uu^{U}$. It follows that the invariant ring is given by
$\OOO(\uu)^{U}=\bk[x,z]$. We have an exact sequence of $U$-modules
$$
0\to \uu^{U} \into \uu \overset{p}{\to} \bk^{2}\to 0 \ \text{ where
}\ p\left(\vv xyz\right):=(x,z).
$$
We claim that  the covariants $\E:=\Cov(\uu,\uu)$ are generated as a
$\OOO(\uu)^{U}$-module by $\id_{\uu}$ and the map
$$\phi_{0}\colon  \vv xyz \mapsto \vv 010.$$
This implies that $\E(v) = \bk v +\uu^{U}$ for
$v\in\uu\setminus \uu^{U}$, hence $d_{\E}(\uu) = 2$ and
$\uu'=\uu\smallsetminus \uu^{U}$. It follows that
$$
\uu\quot \E = \PP(\uu/\uu^{U})\simto \PP^{1}.
$$
In particular, the action of $U$ on the quotient is trivial, and so
$$
\FFF_{U}(\uu) = \FFF_{U}(\uu)^{U}=\bk(\frac{x}{z}).
$$
In order to prove the claim, let $\phi\colon \uu \to \uu$ be a
covariant,
$$
\phi\left(\vv xyz\right)=\vv {p(x,y,z)}{q(x,y,z)}{r(x,y,z)}\ \text{ where }\
p,q,r \in \OOO(\uu)=\bk[x,y,z].
$$
Then, by $(**)$, we get for $a,b,c\in \bk$
\[\tag{1}
q(x,-c x + y + a z,z) = -c\cdot p(x,y,z)+q(x,y,z)+a\cdot r(x,y,z).
\]
This shows that $q$ is linear in $y$, i.e. $q(x,y,z) = q_{0}(x,z) +
q_{1}(x,z)y$, and so
\[\tag{2}
q(x,-c x + y + a z,z) = q_{0}(x,y) + q_{1}(x,z)(-c x + y + a z) =
q_{0} -c\cdot q_{1}x+ q_{1}y + a\cdot q_{1}z
\]
Comparing (2) with (1) we get
\[\tag{3}
p= q_{1} x, \quad q = q_{0} + q_{1}y, \quad r = q_{1}z,
\]
hence $\phi = q_{1}\id_{\uu}+ q_{0}\phi_{0}$, as claimed.
\end{exa}

\begin{exa}
Let $G$ be a reductive group and $V$ an irreducible $G$-module. If
the connected component of the center $Z(G)^{0}$ acts nontrivially,
then $\End_{G}(V) =\bk \id_{V}$. Hence, by Example~\ref{kId.exa}, we
get the following equalities $V\quot\End_{G}(V) \simeq \PP(V)$, $\FFF_{G}(V) = \bk(\PP(V))$,
and $\FFF_{G}(V)^{G} = \bk(\PP(V\quot (G,G)))$.
\newline
(In order to see that $\End_{G}(V) =\bk \id_{V}$ we just remark that
the $G$-module $V^{*}$ occurs only once in $\OOO(V)$, namely in
degree 1. In fact, $Z(G)^{0}$ acts on $V$ via a character $\chi$,
and thus via $\chi^{-d}$ on the homogeneous functions $\OOO(V)_{d}$
of degree $d$.)
\end{exa}

This example generalizes to the situation where $V$ is a reducible
$G$-module such that the characters of $Z(G)^{0}$ on the irreducible
components of $V$ are linearly independent.

\begin{exa}\label{Omin.exa}
Let $V$ be an irreducible representation of a reductive group $G$.
For the orbit $\Om \subset V$ of highest weight vectors we have
$\bOm = \Om \cup \{0\}$, and $\bOm$ is normal with rational
singularities (see \cite{He1979The-normality-of-c}). Clearly, $\bOm$
is $\VG$-symmetric, i.e. stable under all $G$-equivariant
endomorphisms of $V$. We claim that $\E:=\End_{G}(\bOm) =
\bk\cdot\id$. In fact, if $v \in V$ is a highest weight vector, then
the $G$-orbit of $[v]\in\PP(V)$ is closed, and thus the stabilizer
$P$ of $[v]$ is a parabolic subgroup. Hence $P$ is the normalizer of
$G_{v}$ in $G$,  and so $P/G_{v}=\kst$. Since,
$\Aut_{G}(\bOm)=\Aut_{G}(\Om)\simeq P/G_{v}=\kst$ the claim follows.

As a consequence we get $\bOm'=\Om$, $\bOm\quot \E = \Om/\kst =
\PP(\bOm) \subset \PP(V)$, and so $\PP(\bOm)$ is the closed orbit of
highest weight vectors in $\PP(V)$. In particular, $\FFF_{G}(\bOm) =
\bk(\PP(\Om))$, and $\FFF_{G}(\bOm)^{G}=\bk$.
\end{exa}

\ps
%%%%%%%%%%%%%%%%%%%%%%%%%%%%%%%%%%%%%%%%%%
\subsection{First integrals for reductive groups}
Let $G$ be a reductive group, and let $X$ be an irreducible
$G$-variety. Denote by $q\colon X \to X\quot G$ the quotient. Then
\name{Luna}'s slice theorem (see
\cite[pp.~97--98]{Lu1973Slices-etales}) implies the existence of a
{\it principal isotropy group\/} $H \subset G$. This means the
following: \be
\item If $Gx \subset X$ is a closed orbit, then $G_{x}$ contains a conjugate of $H$.
\item The set $(X\quot G)_{\text{\it pr}}$ of points $\xi\in X\quot G$ such that the closed orbit in the fiber $q^{-1}(\xi)$ is $G$-isomorphic to $G/H$ is open and dense in $X\quot G$.
\ee It follows that every closed orbit contains a fixed point of
$H$, hence $\pi(X^{H}) = X \quot G$.
\par\smallskip
The open dense subset $(X\quot G)_{\text{\rm pr}}$ of $X\quot G$ is
called the {\it principal stratum}, and the closed orbits over the
principal stratum are the {\it principal orbits}. If the action on
$X$ is {\it stable}, i.e. if the generic orbits of $X$ are closed,
then the principal orbits are generic.

\begin{thm}\label{first-integrals.thm}
Let $G$ be reductive, $V$ a $G$-module, and let $X \subset V$ be a
$G$-stable and $\VG$-symmetric irreducible closed subvariety. Assume
that the generic orbit of $X$ is closed, with principal isotropy
group $H \subset G$. Then $\FFF_{G}(X) = \bk(G/N)$ where $N :=
\Norm_{G}(H)$. In particular, $\FFF_{G}(X)^{G}=\bk$.
\end{thm}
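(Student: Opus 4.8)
The plan is to exhibit an explicit isomorphism between $\FFF_{G}(X)$ and $\bk(G/N)$ by going through the quotient variety $X\quot\E$ with $\E:=\End_{G}(V)$, using the machinery of Proposition~\ref{quotient-mod-E.prop} and Lemma~\ref{first-integrals.lem} together with the structure of the principal stratum from Luna's slice theorem. The starting point is that since $X$ is $G$-stable and $G$-symmetric, Theorem~\ref{main-theorem-2} gives $\DDD_{\E}=\VF_{G}(V)$ restricted appropriately and $\E(x)=M(x)=\DDD_{\E}(x)$ for all $x\in X$, and the morphism $\pi\colon X'\to\Gr_{d(X)}(V)$, $x\mapsto\E(x)$, is $G$-equivariant with $\pi^{*}\colon\bk(X\quot\E)\simto\FFF_{G}(X)$ a $G$-equivariant isomorphism (this is the discussion opening the ``symmetric case'' subsection and Proposition~\ref{quotient-mod-E.prop}(2)). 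So it suffices to show that $G$ acts on $X\quot\E=\overline{\pi(X')}$ with a dense orbit isomorphic to $G/N$; then Lemma~\ref{first-integrals.lem}(2) finishes the job once we know $\FFF_{G}(X)^{G}=\bk$, and in fact identifying the dense orbit gives both statements at once.

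First I would locate, for a principal point, the value of $\E(x)$. Pick $x\in X$ lying over the principal stratum with $Gx$ closed and $G_{x}=H$ (possible since the generic orbit is closed, so principal orbits are generic). By Proposition~\ref{Panyushev.prop}, or more directly by the Proposition on stabilizers giving $M(v)\subset V^{G_{v}}$ combined with the reductivity argument of Lemma~\ref{surjectivity-of-VecG.lem}, I expect $\E(x)=V^{H}$ (note $\overline{Gx}$ is normal with the codimension-$2$ condition on closed orbits with principal stabilizer, using the slice theorem; this is exactly where the hypotheses ``$X$ $G$-symmetric'', ``$G$ reductive'', and ``generic orbit closed'' are consumed). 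Granting $\E(x)=V^{H}$ for $x$ in a dense open subset of $X'$, the point $\pi(x)=V^{H}\in\Gr_{d(X)}(V)$ is a single point — the same for all such $x$ — so $\pi(X')$ contains the $G$-orbit of $[V^{H}]$ as a dense subset. Hence $X\quot\E=\overline{G\cdot[V^{H}]}$, and the stabilizer of $[V^{H}]$ in $G$ is $\Norm_{G}(V^{H})$.

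Next I would identify $\Norm_{G}(V^{H})$ with $N=\Norm_{G}(H)$. The inclusion $\Norm_{G}(H)\subseteq\Norm_{G}(V^{H})$ is immediate since $g V^{H}=V^{gHg^{-1}}$. For the reverse, if $g$ normalizes $V^{H}$ then $g$ normalizes $H$ up to the subtlety that $H$ may be only the identity component of the pointwise stabilizer of $V^{H}$; here one uses that $H$ is a principal isotropy group, hence (by a standard fact from the slice theorem, e.g. in Luna or Schwarz) equals the generic, and in this situation the full, stabilizer of generic points, so $H$ is recovered from $V^{H}$ as $\{g\in G\mid gv=v\text{ for all }v\in V^{H}\}$ — actually one wants: the generic stabilizer of the $G$-action on $V^{H}$ is $H$ itself, which is the principal isotropy statement again. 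Thus $\Norm_{G}(V^{H})=N$ and $X\quot\E\supseteq G/N$ as a dense orbit. Then $\FFF_{G}(X)=\bk(X\quot\E)=\bk(G/N)$ $G$-equivariantly, and since $G/N$ is a single dense orbit we get $\FFF_{G}(X)^{G}=\bk(G/N)^{G}=\bk$.

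The main obstacle is the computation $\E(x)=V^{H}$ on a dense set, i.e.\ verifying that the hypotheses genuinely force the orbit closure $\overline{Gx}$ of a principal point to be normal with boundary of codimension $\ge 2$ so that Proposition~\ref{Panyushev.prop} applies — or else arranging an alternative lifting argument directly on $X$ via Lemma~\ref{surjectivity-of-VecG.lem} to conclude $M(x)=V^{H}$ without passing through normality of a single orbit closure. A secondary technical point is the precise bookkeeping that the principal isotropy group $H$ is honestly the stabilizer (not merely up to finite index) of generic points of $V^{H}$, which is needed to pin down $\Norm_{G}(V^{H})=N$; this is standard from the slice theorem but should be cited carefully.
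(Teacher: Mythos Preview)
Your approach is correct and matches the paper's proof essentially step for step: show $\E(x)=M(x)=V^{H}$ for a principal $x$ via Proposition~\ref{Panyushev.prop} (after using Lemma~\ref{surjectivity-of-VecG.lem} to pass between $X$ and $V$), deduce that $G$ has a dense orbit in $X\quot\E$ with stabilizer $\Norm_{G}(V^{H})$, and then identify $\Norm_{G}(V^{H})=\Norm_{G}(H)$ from the fact that the generic stabilizer on $V^{H}$ is exactly $H$.

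Your ``main obstacle'' is illusory. Since the generic orbit $Gx$ is \emph{closed} by hypothesis, $\overline{Gx}=Gx$ is a homogeneous space, hence smooth and in particular normal, and the boundary $\overline{Gx}\setminus Gx$ is empty---so the codimension condition is vacuous. Thus Proposition~\ref{Panyushev.prop} applies immediately, with no appeal to the slice theorem needed at this point; the slice theorem enters only to supply the principal isotropy group and the density of principal orbits. Your ``secondary technical point'' is handled exactly as you outline, and this is precisely what the paper does: from $g(V^{H})=V^{gHg^{-1}}$ and the fact that a generic element of $V^{H}$ has stabilizer equal to $H$, one concludes $gHg^{-1}=H$.
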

\begin{proof}
By assumption, the orbit $Gx$ is principal for a generic $x \in
X^{H}$.  The minimal invariant subset $M(x)$ of $X$ is also
minimal invariant as a subset of $V$
(Lemma~\ref{surjectivity-of-VecG.lem}).  Hence,  $M(x) = V^{H}$ by
Proposition~\ref{Panyushev.prop}. Since $M(x) \subset X^{H}\subset
V^{H}$, we finally get $M(x) = X^{H}=V^{H}$. As we have seen above,
$G X^{H}$ contains all closed orbits, and in particular all $M(y)$
for $y$ in the dense open set of principal orbits. This implies that
$G$ has a dense orbit in $X\quot\End_{G}(X)$. Since the stabilizer
of the image $\pi(V^{H})$ is the normalizer $\Norm_{G}(V^{H})$, it
remains to see $\Norm_{G}(V^{H})=\Norm_{G}(H)$. Since $g(V^{H}) =
V^{gHg^{-1}}$  we get $V^{H}=V^{H\cap gHg^{-1}}$ for any
$g\in\Norm(V^{H})$, hence $H = gHg^{-1}$, because the stabilizer of
a generic elements from $V^{H}$ is $H$.
\end{proof}
%Since the generic orbits in a representation of a semisimple group are closed, we get the following consequence.
%\begin{cor}
%Let $V$ be a representation of a semisimple group $G$, and let  $H \subset G$ be the principal isotropy group. Then $\FFF_{G}(V) = \bk(G/N)$ where $N := \Norm_{G}(H)$, and $\FFF_{G}(V)^{G}=\bk$. In particular, $\FFF_{G}(V) = \bk$ if the principal isotropy group is trivial.
%\end{cor}
Note that for a ``generic'' representation of a semisimple group $G$
the principal isotropy group is trivial, hence there are no
non-constant $G$-invariant first integrals. The irreducible representations of
simple groups with a nontrivial principal isotropy group have been
classified (\cite{AnViEl1967Orbits-of-highest-},
\cite{Po1975Irreducible-simple},  cf.
\cite[\S7]{ViPo1994Invariant-theory}).

The fact that there are no non-constant $G$-invariant first integrals is a
consequence of the following slightly more general result.

\begin{prop}\label{FGinvariantsAreTrivial.prop}
Let $V$ be a representation of a reductive group $G$. Assume that the
generic fiber of the quotient map $q\colon V \to V \quot G$ contains
a dense orbit $O\simeq G/K$, i.e. $\bk(V)^{G}$ is the field of
fractions of $\OOO(V)^{G}$, and that $\codim_{F}F\setminus O\geq 2$.
Then $\FFF_{G}(V) \simeq \bk(G/\Norm_{G}(K))$ and
$\FFF_{G}(V)^{G}=\bk$.
\end{prop}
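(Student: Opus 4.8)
The plan is to mimic closely the structure of the proof of Theorem~\ref{first-integrals.thm}, replacing the use of the principal isotropy group for a stable action by the hypothesis that the generic fiber of $q$ has a dense orbit. First I would fix a point $v\in V$ lying in the dense orbit $O\simeq G/K$ of a generic fiber $F = q^{-1}(q(v))$, so that $G_v$ is conjugate to $K$; without loss I take $G_v = K$. The two hypotheses — that $\bk(V)^G$ is the fraction field of $\OOO(V)^G$, and that $\codim_F(F\setminus O)\geq 2$ — are exactly what is needed to invoke Proposition~\ref{Panyushev.prop} with ``$Gv = O$'' and ``$\overline{Gv} = F$'': indeed $F$ is an irreducible closed $G$-stable subvariety (a fiber of the affine quotient), it is normal after possibly shrinking to a dense open set of $V\quot G$ — here one should note that the generic fiber of $q$ is normal because $V$ is normal and $q$ is flat in codimension one, or alternatively restrict attention to the principal stratum where Luna's slice theorem gives fibers of the form $G\times_K S$ with $S$ a normal slice — and $\overline{O} = F$ with the boundary of codimension $\geq 2$. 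Hence Proposition~\ref{Panyushev.prop} applies and gives $M(v) = \End_G(V)(v) = V^{G_v} = V^K$.

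Next I would argue that $G$ acts with a dense orbit on the quotient $V\quot\End_G(V) = \overline{\pi(V')}$. Since $M(v) = V^K$ for $v$ in the dense orbit $O$ of the generic fiber, and since these generic fibers sweep out a dense open subset of $V$ as $q(v)$ ranges over a dense open subset of $V\quot G$, the union $\bigcup_{g\in G}\, g\,V^K = G\cdot V^K$ contains a dense open subset of $V$; equivalently, the $G$-translates of the single point $\pi(v) = [V^K]\in\Gr_{d(V)}(V)$ are dense in $\overline{\pi(V')}$. Therefore $G$ has a dense orbit $G\pi(v)\simeq G/\Norm_G(V^K)$ in $X\quot\End_G(V)$, and by Proposition~\ref{quotient-mod-E.prop}(2) together with the $G$-equivariance of $\pi$ discussed in the symmetric case, $\FFF_G(V) = \bk(\overline{\pi(V')}) = \bk(G/\Norm_G(V^K))$. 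In particular $\FFF_G(V)^G = \bk(\overline{\pi(V')})^G = \bk$, since a group acting with a dense orbit has no nonconstant invariant rational functions.

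Finally I would identify $\Norm_G(V^K)$ with $\Norm_G(K)$, exactly as at the end of the proof of Theorem~\ref{first-integrals.thm}: for $g\in\Norm_G(V^K)$ we have $V^K = g(V^K) = V^{gKg^{-1}}$, hence $V^K = V^{K\cap gKg^{-1}}$; but the stabilizer of a generic element of $V^K$ is $K$ (because a generic point of $V^K$ lies in the dense orbit $O$ of its fiber and so has stabilizer exactly $K$), which forces $K\cap gKg^{-1} = K$, i.e. $K = gKg^{-1}$. Conversely $\Norm_G(K)$ clearly stabilizes $V^K$. Thus $\Norm_G(V^K) = \Norm_G(K)$ and the proposition follows with $N := \Norm_G(K)$. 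The main obstacle I anticipate is the normality issue: Proposition~\ref{Panyushev.prop} requires $\overline{Gv}$ to be normal, so one must justify that the generic fiber $F$ (or at least a dense open family of fibers) is a normal variety — this is where one should lean on Luna's slice theorem over the principal stratum, which presents the fiber as $G\times_K S$ for $S$ an étale slice that can be taken smooth (hence normal), rather than on a bare generic-smoothness argument; everything else is a routine adaptation of the earlier proofs.
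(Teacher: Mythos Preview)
Your approach is correct but takes a genuinely different route from the paper. You compute $M(v)=V^{K}$ explicitly via Proposition~\ref{Panyushev.prop} and then mimic the end of Theorem~\ref{first-integrals.thm} to identify $\Norm_{G}(V^{K})=\Norm_{G}(K)$. The paper never computes $M(v)$; instead it argues that by \name{Luna}'s slice theorem the fibers over the principal stratum are all $G$-isomorphic, so (using the extension trick from the proof of Proposition~\ref{Panyushev.prop}) $\End_{G}(V)$ permutes these fibers transitively, whence $\overline{\pi(V')}=\overline{\pi(F\cap V')}$. It then shows directly that $\End_{G}(V)v\cap O=\Aut_{G}(O)v$ by using smoothness of $F$ and $\codim_{F}(F\setminus O)\geq 2$ to extend every $G$-automorphism of $O$ to $F$ and then lift it to $V$; since $\Aut_{G}(G/K)=\Norm_{G}(K)/K$, this gives $\pi(O)\simeq G/\Norm_{G}(K)$ at once, without ever handling $\Norm_{G}(V^{K})$. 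Your route has the virtue of making the parallel with Theorem~\ref{first-integrals.thm} transparent and of yielding the extra information $M(v)=V^{K}$; the paper's route is more direct and avoids the auxiliary claim that a generic point of $V^{K}$ has stabilizer exactly $K$. Two small corrections to your write-up: the normality of the generic fiber is immediate from generic smoothness in characteristic zero (indeed the paper uses that $F$ is smooth), and your \name{Luna} alternative would produce $G\times_{H}S$ with $H$ the \emph{principal} isotropy group, not $K$; also, in your last step a generic $w\in V^{K}$ lies in the dense orbit of \emph{its own} fiber (which is $G$-isomorphic to $O$, not literally $O$), so $G_{w}$ is conjugate to $K$ and contains $K$, hence equals $K$ --- the conclusion is right but the phrasing should be sharpened.
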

\begin{proof}
Let $F$ be a fiber of the quotient map $q$ over the principal
stratum, and let $O \subset F$ be the dense orbit. Consider the
morphism $\pi\colon V' \to V\quot\End_{G}(V) \subset \Gr_{d}(V)$,
$d:=d_{\End_{G}(V)}(V)$. We claim that $O\subset V'$, that
$\overline{\pi(O)} = \overline{\pi(V')}=V\quot\End_{G}(V)$, and that
the image of $O$ under $\pi$ is $G/\Norm_{G}(K)$. This will prove
the proposition.

\name{Luna}'s slice theorem tells us that all the fibers of the quotient
map $q$ over the principal stratum are $G$-isomorphic. This implies that
$\End_{G}(V)$ acts transitively on the set of these fibers (see the
argument in the proof of Proposition~\ref{Panyushev.prop}), hence
$\overline{\pi(V')} = \overline{\pi(F\cap V')}$. Since  $F\cap V'$
is open and $G$-stable, we have $O \subset V'$. If $\phi \in
\End_{G}(V)$ and $\phi(v) \in O$ for some $v \in O$, then $\phi(O) =
O$, and so $\phi|_{O}$ is a $G$-equivariant automorphism. On the
other hand, let $\psi\colon O \to O$ be a $G$-equivariant
automorphism. Since $F$ is smooth and the complement of the orbit $O
\subset F$ has codimension $\geq 2$ we have $\OOO(O) = \OOO(F)$.
Therefore, $\psi$ extends to a $G$-equivariant automorphism of $F$,
and then lifts to  a $G$-equivariant endomorphism of $V$.  This
implies that $\End_{G}(V) v \cap O = \Aut_{G}(O)v$. Hence,
$\pi(O)\simeq O/\Aut_{G}(O) \simeq G/\Norm_{G}(K)$, and the claims
follow.
\end{proof}

\section{Actions of $\SLtwo$}\label{SLtwo.sec}
%%%%%%%%%%%%%%%%%%%%%%%%%%%%%%%%%%%%%%%%%
\subsection{Representations}
The standard representation of $\SLtwo$ on $V := \bk^{2}$ defines a
linear action given by
$gf(v):=f(g^{-1} v)$ on the coordinate ring $\OOO(V) = \bk[x,y]$. It is well-known that the homogeneous
components $V_{d}:=\bk[x,y]_{d}$, $d=0,1,2,\ldots$, represent all
irreducible representations of $\SLtwo$, i.e. all simple
$\SLtwo$-modules. As usual, $B \subset \SLtwo$ denotes the
Borel-subgroup of upper triangular matrices, $T \subset \SLtwo$ the
diagonal torus, and $N\subset \SLtwo$ the normalizer of $T$.

\begin{rem}\label{covariant.rem}
An $\SLtwo$-equivariant morphism $\phi\colon V \to W$ between two
$\SLtwo$-modules is called a {\it covariant}. Every covariant is a
sum of homogeneous covariants:
$$
\Cov(V,W)=\bigoplus_{j\in\NN}\Cov(V,W)_{j}.
$$
$\Cov(V,W)$ is a finitely generated $\OOO(V)^{\SLtwo}$-module where
the module structure is given by  $f\phi(v) := f(v)\cdot\phi(v)$
(cf. \cite[IV.~Theorem~2.3.1]{Kr2016Algebraic-Transfor} or
\cite[II.3.2~Zusatz]{Kr1984Geometrische-Metho}). Moreover,
$\End_{\SLtwo}(V) = \Cov(V,V)$.
\end{rem}

\begin{prop} Set $\E_{d}:=\End_{\SLtwo}(V_{d})$, and denote by $J_d:=\OOO(V_d)^{\SLtwo}$ the algebra of invariants.
\be
\item
$\E_{1} = \bk\id_{V_{1}}$, hence $d_{\E_{1}}(V_{1}) = 1$. Moreover,
$V'_{1}=V_{1}\setminus\{0\}$, $V_{1}\quot \E_{1} = \PP(V_{1})$ and
$\FFF_{\SLtwo}(V_{1})\simeq \bk(\SL_{2}/B)$.
\item
$\E_{2} = J_2\id_{V_{2}}$, hence $d_{\E_{2}}(V_{2}) = 1$. Moreover,
$V_{2}' = V_{2}\setminus \{0\}$, $V_{2}\quot \E_{2} = \PP(V_{2})$
and $\FFF_{\SLtwo}(V_{2}) \simeq \bk(\SLtwo/N)$.
\item
$\E_{3} = J_3\id_{V_{3}}\oplus J_3\, dD$ where $D$ is the
discriminant and $dD\colon V_3 \to V_3^*$ its differential. Hence
$d_{\E_{3}}(V_{3}) = 2$. Moreover, $V_3' = V_3 \setminus
\overline{\SLtwo\cdot x^3}$ and $\FFF_{\SLtwo}(V_3) \simeq
\bk(\SLtwo/N)$.
\item
$\E_{4} = J_4\id_{V_{4}}\oplus J_4 H$ where $H$ is the Hessian,
hence $d_{\E_{4}}(V_{4}) = 2$. Moreover, $V_4' = V
\setminus\overline{\SLtwo \cdot \bk x^2y^2}$ and
$\FFF_{\SLtwo}(V_4) \simeq \bk(\SLtwo/O)$ where $O$ is the binary
octahedral group.
\item
For $d\geq 5$, we have $\E_{d}(f) = V_{d}$ for a generic $f \in
V_{d}$, hence $d_{\E_{d}}(V_{d}) = \dim V_{d}$ and
$\FFF_{\SLtwo}(V_{d})=\bk$. \ee
\end{prop}
\begin{proof}
For $d\leq 4$ the $J_d$-module $\E_d=\Cov(V_d,V_d)$ it is a free
module, and the generators can be found in the classical literature,
e.g. in \cite[II.\,\S8]{Sc1968Vorlesungen-uber-I}. For $d=1$ there
is a dense orbit isomorphic to $\SLtwo/U$ whose complement is
$\{0\}$. In particular, $V_1' = V_1\setminus\{0\}$. Since $\E_1 =
\bk\id_{V_1}$ we get $\E_1(f) = \bk f$ and so $V_1\quot \E_1
=\PP(V_1)$. The remaining claims of (1)  follow from
Proposition~\ref{FGinvariantsAreTrivial.prop}.
\par\smallskip
For $d>1$ the generic fibers of the quotient maps $\pi\colon V_d \to
V_d\quot \SLtwo$ are orbits isomorphic to $\SLtwo/H_d$ where $H_2 =
T$, $H_3 = \mu_3$, $H_4 = \tilde D_4$, the binary dihedral group of
order $8$. In the first two cases, the normalizer is equal to $N$,
and we get $V_2' = V_2\setminus\{0\}$ and $V_3'=V_3\setminus
\overline{\SLtwo x^3}$. The remaining claims of (2) and (3)  follow
from Proposition~\ref{FGinvariantsAreTrivial.prop} where in (2) we
use again the fact that $\E_2(f) = \bk f$ for a general $f$ to get
$V_2\quot\E_2 = \PP(V_2)$.
\par\smallskip
For $d=4$ the normalizer of $H_4=\tilde D_4$ is the binary
octahedral group $O$ of order 48 and we have $V_4' = V
\setminus\overline{\SLtwo \cdot \bk x^2y^2}$. Hence
$\FFF_{\SLtwo}(V_4) \simeq \bk(\SLtwo/O)$ by
Proposition~\ref{FGinvariantsAreTrivial.prop}, proving (4).
\par\smallskip
For $d>4$ the stabilizer $H_d$ is trivial for odd $d$ and equal to
the kernel $\{\pm E\}$ of the action for even $d$. Hence, by
Proposition~\ref{Panyushev.prop}, $\E(f) = V_{d}$ for a generic $f$,
and the claims follow.
\end{proof}

%(1) $\mathcal{E}_{1}=\Bbbk$id$_{V_{1}}$, hence $d_{\mathcal{E}_{1}}(V_{1})=1
%$. Moreover, $V_{1}^{{%
%%TCIMACRO{\U{b4}}%
%%BeginExpansion
%\acute{}%
%%EndExpansion
%}}=V_{1}\backslash\{0\}$, $V_{1}//\mathcal{E}_{1}=\mathbb{P}(V_{1})$ and
%$\mathcal{F}$$_{G}(V_{1})\simeq\Bbbk(G/B)$.
%
%(2) $\mathcal{E}_{2}=\Bbbk\lbrack V_{2}]^{G}$id$_{V_{2}}$, hence
%$d_{\mathcal{E}_{2}}(V_{2})=1$. Moreover, $V_{2}^{{\acute{%
%%TCIMACRO{\U{b4}}%
%%BeginExpansion
%\acute{}%
%%EndExpansion
%}}}=V_{2}\backslash\{0\}$, $V_{2}//\mathcal{E}_{2}=\mathbb{P}(V_{2})$ and
%$\mathcal{F}$$_{G}(V_{2})\simeq\Bbbk(G/N)$.
%
%(3) $\mathcal{E}_{3}=\Bbbk\lbrack V_{3}]^{G}id_{V_{3}}\oplus\Bbbk\lbrack
%V_{3}]^{G}G^{\ast}$, hence, $d_{\mathcal{E}_{3}}(V_{3})=2$. Moreover, $V_{3}%
%%TCIMACRO{\U{b4}}%
%%BeginExpansion
%\acute{}%
%%EndExpansion
%=\{G\text{\textperiodcentered}A(x^{3}+y^{3}):A\in k\}\cup
%\{G\text{\textperiodcentered}A(x^{2}y):A\in k\}$ and $\mathcal{F}_{G}%
%(V_{3})=\Bbbk(G/N)$.
%
%(4) $\mathcal{E}_{4}=\Bbbk\lbrack V_{4}]^{G}id_{V_{4}}\oplus\Bbbk\lbrack
%V_{4}]^{G}H$, hence, $d_{\mathcal{E}_{4}}(V_{4})=2$. Moreover, $V_{4}%
%%TCIMACRO{\U{b4}}%
%%BeginExpansion
%\acute{}%
%%EndExpansion
%=\{G\text{\textperiodcentered}A(x^{3}y):A\in k\}\cup
%\{G\text{\textperiodcentered}A(x^{3}y+x^{2}y^{2}):A\in k\}\cup
%\{G\text{\textperiodcentered}(rx^{4}+6mx^{2}y^{2}+ry^{4}):rm\neq0,r\neq
%\pm3m\}$ and $\mathcal{F}_{G}(V_{4})=\Bbbk(G/2O)$ where $2O$ is the binary
%octahedral group.
%
%(5) For $d\geq5$, we have $\mathcal{E}_{d}(f)=V_{d}$ for a generic $f\in
%V_{d}$. Hence, $d_{\mathcal{E}_{d}}(V_{d})=\dim V_{d}$ and $\mathcal{F}%
%_{G}(V_{d})=\Bbbk$.
%

\ps
%%%%%%%%%%%%%%%%%%%%%%%%%%%%%%%%%%%%%%%%%%
\subsection{The nullcone $\NNN(V)$}
A very interesting object in this setting is the {\it nullcone}
$\NNN(V) \subset V$ of a representation $V$ of $\SLtwo$ which is
defined in the following way. Denote by $q \colon V \to
V\quot\SLtwo$ the quotient morphism, i.e., $V\quot\SLtwo =
\Spec\OOO(V)^{\SLtwo}$ and $q$ is induced by the inclusion
$\OOO(V)^{\SLtwo} \subset \OOO(V)$. Then $\NNN(V):=q^{-1}(q(0))$, or
equivalently,  $\NNN(V)$ is the zero set of all homogeneous
invariants of positive degree. In case $V = V_{d}$ the elements from
$\NNN(V_{d})$ are classically called {\it nullforms}. One has the
following description. Denote by $T \subset \SLtwo$ the diagonal
torus, and define the {\it weight spaces}
$$
V[i]:=\{f \in V \mid \left[\begin{smallmatrix} t & 0 \\ 0 &
t^{-1}\end{smallmatrix}\right] f = t^{i}f \text{  for all }t \in
\bk^{*}\} \text{ \ for } i \in \NN.
$$
Since the representation of $T$ is completely reducible we have $V =
\bigoplus_{j} V[j]$. For $V = V_{d}$ we get the decomposition $V_{d} =
\bigoplus_{i=0}^{d} V_{d}[d-2i]$, and the weight spaces are
one-dimensional. Note that $\left[\begin{smallmatrix} t & 0 \\ 0 &
t^{-1}\end{smallmatrix}\right] x = t^{-1}x$, and
$\left[\begin{smallmatrix} t & 0 \\ 0 &
t^{-1}\end{smallmatrix}\right] y = t y$, and so
$$
V_{d}[d-2i]=\bk x^{i}y^{d-i}.
$$
\begin{lem}
The following statements for a form $f \in V_{d}$ are equivalent.
\be
\item[(i)] $f$ is a nullform, i.e. $f \in \NNN(V_{d})$.
\item[(ii)] There is a one-parameter subgroup $\lambda\colon \bk^{*} \to \SLtwo$ such that $\lim_{t\to 0}\lambda(t) f = 0$.
\item[(iii)] $f$ is in the $\SLtwo$-orbit of an element from $V_{d}^{+}:=\bigoplus_{i>0} V_{d}[i] \subset V_{d}$.
\item[(iv)] $f$ contains a linear factor with multiplicity $> \frac{d}{2}$.
\ee
\end{lem}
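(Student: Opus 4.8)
The plan is to prove the equivalence of the four statements by establishing the cycle (ii) $\Rightarrow$ (i) $\Rightarrow$ (ii), then (ii) $\Leftrightarrow$ (iii), and finally (iii) $\Leftrightarrow$ (iv), using the explicit description of the torus action on the monomial basis of $V_d$.

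First I would handle (i) $\Leftrightarrow$ (ii), which is just the Hilbert--Mumford criterion: $f$ is a nullform, i.e.\ $0 \in \overline{\SLtwo f}$, if and only if there is a one-parameter subgroup $\lambda$ of $\SLtwo$ with $\lim_{t\to 0}\lambda(t)f = 0$. Both implications are standard; the nontrivial direction (i) $\Rightarrow$ (ii) is exactly the Hilbert--Mumford theorem applied to the $\SLtwo$-action on $V_d$, while (ii) $\Rightarrow$ (i) is immediate since the limit lies in $q^{-1}(q(0))$ by continuity of $q$ and $\SLtwo$-invariance.

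Next I would prove (ii) $\Leftrightarrow$ (iii). Every one-parameter subgroup of $\SLtwo$ is conjugate to one of the form $t \mapsto \left[\begin{smallmatrix} t^{m} & 0 \\ 0 & t^{-m}\end{smallmatrix}\right]$ with $m > 0$, so after replacing $f$ by a suitable $\SLtwo$-translate we may assume $\lambda$ is the standard torus one-parameter subgroup. Writing $f = \sum_{i} c_i\, x^i y^{d-i}$ and recalling $\left[\begin{smallmatrix} t & 0 \\ 0 & t^{-1}\end{smallmatrix}\right]\! \cdot x^{i}y^{d-i} = t^{d-2i}\,x^{i}y^{d-i}$, the condition $\lim_{t\to 0}\lambda(t)f = 0$ means every monomial occurring in $f$ has strictly positive $T$-weight, i.e.\ $f \in V_d^{+}$. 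Conjugating back gives (iii), and conversely any element of $V_d^{+}$ is killed in the limit by the standard $\lambda$, so its $\SLtwo$-orbit consists of nullforms by (i) $\Leftrightarrow$ (ii) combined with $\SLtwo$-invariance of $\NNN(V_d)$.

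Finally, (iii) $\Leftrightarrow$ (iv) is the concrete translation. An element of $V_d^{+} = \bigoplus_{i>0} V_d[i]$ is a form $f = \sum_{i < d/2} c_i\, x^i y^{d-i}$ (using $V_d[d-2i] = \bk\, x^i y^{d-i}$ and requiring $d - 2i > 0$), which is exactly a form divisible by $y^{\lceil d/2 \rceil}$, hence has $y$ as a linear factor of multiplicity $> d/2$; since $\SLtwo$ acts transitively on linear forms (up to scalar) and permutes factorizations accordingly, $f$ lies in the $\SLtwo$-orbit of such an element precisely when $f$ has \emph{some} linear factor of multiplicity $> d/2$. The main obstacle — and the only place real content enters — is the Hilbert--Mumford step (i) $\Rightarrow$ (ii); everything else is bookkeeping with the weight decomposition. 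I would either cite Hilbert--Mumford directly or, since the group is $\SLtwo$ and the setting is this elementary, sketch the destabilizing-one-parameter-subgroup argument in one line.
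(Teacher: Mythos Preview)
Your proof is correct and follows essentially the same approach as the paper: (i)\,$\Leftrightarrow$\,(ii) via Hilbert--Mumford, (ii)\,$\Leftrightarrow$\,(iii) via conjugacy of one-parameter subgroups into $T$, and (iii)\,$\Leftrightarrow$\,(iv) by reading off the power of $y$ dividing an element of $V_d^+$. One small slip: for even $d$ the bound should be divisibility by $y^{\lfloor d/2\rfloor+1}$ rather than $y^{\lceil d/2\rceil}$, since the latter only gives multiplicity $\geq d/2$; your subsequent clause ``multiplicity $> d/2$'' is the correct conclusion, so just adjust the exponent.
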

\begin{proof}
(a) The equivalence of (i) and (ii) is a consequence of the famous
\name{Hilbert-Mumford}-Criterion and holds for any representation of
a reductive group.
\par\smallskip
(b) (ii) and (iii) are equivalent, because every one-parameter
subgroup of $\SLtwo$ is conjugate to a one-parameter subgroup of
$T$. This holds for any representation of $\SLtwo$.
\par\smallskip
(c) The equivalence of (iii) and (iv) is clear, because $V_{d}^{+}$
are the forms which contain $y$ with multiplicity at least $\frac{d}{2}$.
\end{proof}

Let $V$ be a representation of $\SLtwo$.  If $\phi\in
\End_{\SLtwo}(V)$ is homogeneous of degree $k$, then $\phi(V[j])
\subset V[kj]$. It follows that $\phi(\bigoplus_{j\geq j_{0}}V[j])
\subset \bigoplus_{j\geq k j_{0}}V[j]$. In particular, the
subspaces $\bigoplus_{j\geq j_{0}}V[j]$ are $\SLtwo$-symmetric for
any $j_{0}\geq 0$, because any endomorphism is a sum of homogeneous
endomorphisms (Remark~\ref{covariant.rem}). Since every element $f
\in \NNN(V)$  is $\SLtwo$-equivalent to an element from
$V^{+}:=\bigoplus_{j>0}V[j]$ it suffices to study the
$\SLtwo$-symmetric subspaces of $V^{+}$.

\ps
%%%%%%%%%%%%%%%%%%%%%%%%%%%%%%%%%%%%%%%
\subsection{Special covariants}
For the study of the $\SLtwo$-symmetric subspaces of the nullforms
$\NNN(V_d)$ we need the existence (and the non-vanishing) of certain
covariants which we are going to construct now.

Let $\phi\colon V_{d} \to \End(V_{d})$ and $\psi\colon V_{d} \to
V_{d}$ be homogeneous covariants. Then we define covariants denoted 
$\Phi_{s}=\Phi_{s}(\phi,\psi)\in \End_{\SLtwo}(V_{d})$ by
$$
\Phi_{s}(\phi,\psi)f :=
\phi(f)^{s}\psi(f)=(\phi(f)\circ\phi(f)\circ\cdots\circ\phi(f))(\psi(f))
$$
This is a homogeneous covariant of degree $\deg\Phi_{s}= s\deg\phi +
\deg \psi$.

Let $\sltwo := \Lie \SLtwo$ be the Lie algebra of $\SLtwo$ which
acts on a representation  $V$ of $\SLtwo$ by the adjoint
representation $\ad\colon \sltwo \to \End(V)$. As an $\SLtwo$-module
we have $\sltwo \simto V_{2}$, and $\sltwo[2] = \bk\nn$.

\begin{lem}\label{covariants.lem}
Let $V_{d}$ denote the binary forms of degree $d$, considered as a
representation of $\SLtwo$. \be
\item If $d$ is odd, then there is a quadratic covariant $\phi_{0}\colon V_{d}\to\sltwo$ such that
$\phi_{0}(V_{d}[1]) = \sltwo[2]=\bk\nn$.
\item If $d$ is even, then there is a quadratic covariant $\phi_{0}\colon V_{d} \to \sltwo \otimes \sltwo$ such that $\phi_{0}(V_{d}[2]) = \sltwo[2]\otimes\sltwo[2]$.
\item  If $d\equiv 0\mod 4$, then there is a quadratic covariant $\psi\colon V_{d} \to V_{d}$ such that $\psi(V_{d}[2]) =V_{d}[4]$.
\item If  $d\equiv 2\mod 4$ and $d\geq 10$, then there is a homogeneous covariant $\psi\colon V_{d} \to V_{d}$ of degree 4 such that $\psi(V_{d}[2]) = V_{d}[8]$, and there is no quadratic covariant.
\ee
\end{lem}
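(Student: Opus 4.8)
The plan is to realize a homogeneous covariant of degree $k$ from $V_{d}$ to an $\SLtwo$-module $W$ as an element of $\operatorname{Hom}_{\SLtwo}(\operatorname{Sym}^{k}V_{d},W)$ (cf. Remark~\ref{covariant.rem}), and to produce the required covariants by iterated \emph{transvectants}. Recall that for $p\in V_{m}$, $q\in V_{n}$ the $r$-th transvectant $(p,q)_{r}\in V_{m+n-2r}$ is $\SLtwo$-equivariant and bilinear, satisfies $(p,q)_{r}=(-1)^{r}(q,p)_{r}$ (so $(p,p)_{r}=0$ for odd $r$), and — this is the computational core — applied to two monomials produces a \emph{single} monomial,
\[
(x^{a_{1}}y^{b_{1}},x^{a_{2}}y^{b_{2}})_{r}=C\cdot x^{a_{1}+a_{2}-r}\,y^{b_{1}+b_{2}-r},
\]
where $C$ is a nonzero scalar times $\sum_{i}(-1)^{i}\binom{r}{i}\bigl[(a_{1}-r+i)!\,(b_{1}-i)!\,(a_{2}-i)!\,(b_{2}-r+i)!\bigr]^{-1}$, summed over the $i$ for which all four factorials are defined. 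Finally, $\operatorname{Sym}^{2}V_{d}=V_{2d}\oplus V_{2d-4}\oplus V_{2d-8}\oplus\cdots$, so $V_{e}$ occurs in $\operatorname{Sym}^{2}V_{d}$, necessarily once, precisely when $0\le e\le 2d$ and $e\equiv 2d\pmod 4$.

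Parts (1) and (2) are then immediate. If $d$ is odd, $V_{2}$ occurs once in $\operatorname{Sym}^{2}V_{d}$, so $\phi_{0}(f):=(f,f)_{d-1}$ (note $d-1$ is even), viewed as a map to $\sltwo\cong V_{2}$, is the essentially unique quadratic covariant $V_{d}\to\sltwo$; evaluated on the weight vector $x^{(d-1)/2}y^{(d+1)/2}$ spanning $V_{d}[1]$, the admissibility constraints on $i$ leave exactly one summand, so $\phi_{0}(V_{d}[1])=\bk y^{2}=V_{2}[2]$, which corresponds to $\sltwo[2]=\bk\nn$. If $d$ is even, take $\phi_{0}$ to be the composite of $(f,f)_{d-2}\colon V_{d}\to V_{4}$ with the inclusion $V_{4}\hookrightarrow V_{2}\otimes V_{2}=\sltwo\otimes\sltwo$; again a single summand survives on $V_{d}[2]$, giving the nonzero line $\bk y^{4}=V_{4}[4]$, which under $V_{4}\hookrightarrow V_{2}\otimes V_{2}$ is the highest-weight line $\sltwo[2]\otimes\sltwo[2]$.

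For (3), let $d\equiv 0\pmod 4$. Then $V_{d}$ occurs once in $\operatorname{Sym}^{2}V_{d}$ and, $d/2$ being even, $\psi(f):=(f,f)_{d/2}\colon V_{d}\to V_{d}$ is a nonzero quadratic covariant (unique up to scalar). Being of degree $2$ it sends $V_{d}[2]$ into the one-dimensional $V_{d}[4]$, so it suffices to see $\psi$ does not vanish on $x^{(d-2)/2}y^{(d+2)/2}$; the transvectant formula gives $\psi(x^{(d-2)/2}y^{(d+2)/2})=S\cdot x^{(d-4)/2}y^{(d+4)/2}$ with $S$ proportional to $\sum_{j}(-1)^{j}\binom{d/2}{j}\binom{d/2}{j+1}\binom{d/2}{j+2}$, which is nonzero. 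For (4), let $d\equiv 2\pmod 4$, $d\ge 10$. Then $V_{d}$ does \emph{not} occur in $\operatorname{Sym}^{2}V_{d}$ (equivalently $(f,f)_{d/2}=0$ since $d/2$ is odd), so there is no nonzero quadratic covariant $V_{d}\to V_{d}$. For the degree-$4$ covariant put $\chi(f):=(f,f)_{d-2}\colon V_{d}\to V_{4}$ — which on $V_{d}[2]$ again yields a single nonzero term in $V_{4}[4]$ — and set
\[
\psi(f):=\bigl((\chi(f),f)_{2},\,f\bigr)_{d/2}\colon V_{d}\to V_{d},
\]
a homogeneous covariant of degree $4$. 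On $v:=x^{(d-2)/2}y^{(d+2)/2}$, the order-$2$ transvectant $(\chi(v),v)_{2}$ reduces — because of $\binom{2}{i}$ and the range of $i$ — to the single term $i=2$, yielding a nonzero $w\in V_{d}[6]$, and $(w,v)_{d/2}$ then equals $x^{(d-8)/2}y^{(d+8)/2}$ times a nonzero scalar proportional to $\sum_{j}(-1)^{j}\binom{d/2}{j}\binom{d/2}{j+3}\binom{d/2}{j+4}$. Hence $\psi(v)\neq0$, so $\psi$ is a genuine degree-$4$ covariant with $\psi(V_{d}[2])=V_{d}[8]$; the hypothesis $d\ge 10$ is exactly what makes $V_{d}[8]\neq0$ and all intermediate weights and transvectant orders admissible.

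The main obstacle is the non-vanishing of the two alternating binomial sums above: $\sum_{j}(-1)^{j}\binom{n}{j}\binom{n}{j+1}\binom{n}{j+2}\neq0$ for $n$ even, and $\sum_{j}(-1)^{j}\binom{n}{j}\binom{n}{j+3}\binom{n}{j+4}\neq0$ for $n$ odd. (The first sum vanishes for $n$ odd — seen via $j\mapsto n-2-j$ — which is consistent with the absence of a quadratic covariant there.) I would establish both from the closed form given by \name{Dixon}'s summation theorem for terminating well-poised ${}_{3}F_{2}$-series: each sum becomes an explicit quotient of factorials whose non-vanishing is visible from the parity of $n$; small cases ($n=2,4,6$ and $n=5,7,9$) can be checked by hand as a sanity test. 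Everything else is the routine bookkeeping of which summand of a transvectant of monomials survives.
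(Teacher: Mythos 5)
Your constructions for (1)--(3) are the same as the paper's: $(f,f)_{d-1}$, then $(f,f)_{d-2}$ followed by $V_{4}\into\sltwo\otimes\sltwo$, then $(f,f)_{d/2}$, with the ``single surviving term'' argument handling (1) and (2) and part (3) reduced to the non-vanishing of $\Sigma_{1}(n):=\sum_{j}(-1)^{j}\binom{n}{j}\binom{n}{j+1}\binom{n}{j+2}$ for $n=d/2$ even --- exactly the sum behind the paper's $c_{m,m}$, for which the paper quotes \name{Konvalinka}'s result (Remark~\ref{Konvalinka.rem}). Your Dixon route for $\Sigma_{1}$ is defensible: the term ratio exhibits it as a terminating well-poised ${}_{3}F_{2}(-n+2,-n+1,-n;2,3;1)$, though you must use the degenerate (top parameter a negative even integer) form of Dixon's theorem, since the generic formula is a $0/0$ here. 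Your part (4) covariant $\psi(f)=\bigl(((f,f)_{d-2},f)_{2},f\bigr)_{d/2}$ is a genuinely different (and structurally valid) alternative to the paper's $((f,f)_{3k},(f,f)_{3k+2})_{1}$, and your weight and degree bookkeeping checks out.

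The gap is the non-vanishing claim that your part (4) hinges on. The sum you arrive at, $\Sigma_{2}(n):=\sum_{j}(-1)^{j}\binom{n}{j}\binom{n}{j+3}\binom{n}{j+4}$ with $n=d/2$ odd, is \emph{not} a well-poised ${}_{3}F_{2}$: its term ratio gives ${}_{3}F_{2}(-n,-n+3,-n+4;4,5;1)$, and well-poisedness with lower parameters $4,5$ would force the numerator parameters to have the form $\{a,a-3,a-4\}$, which $\{-n,-n+3,-n+4\}$ does not (nor is the series Saalsch\"utzian). So Dixon's summation theorem --- the one tool you name for what you correctly call the main obstacle --- simply does not apply, and the non-vanishing of $\Sigma_{2}(n)$ for all odd $n\geq 5$ is left resting on the spot checks $n=5,7,9$. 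To be fair, the paper's own part (4) is in a comparable state: it rests on the closed form $(**)$ for $c_{m,r}$ produced symbolically by Mathematica, and the authors explicitly flag that only parts (1)--(3) of Lemma~\ref{covariants.lem} are rigorously established. But as written your argument for (4) is incomplete; to close it you would need an actual evaluation or non-vanishing proof for $\Sigma_{2}$ (e.g.\ by creative telescoping), or else switch to the paper's covariants, whose coefficients are the $c_{m,r}$ of Lemma~\ref{nonzero-transvection.lem}.
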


For the proof let us recall the \name{Clebsch-Gordan}-decomposition
of the tensor product $V_{d}\otimes V_{e}$ as an $\SLtwo$-module
where we assume that $d \geq e$:
$$
V_{d}\otimes V_{e} \simeq \bigoplus_{r=0}^{e} V_{d+e-2r}.
$$
The projection $V_{d}\otimes V_{e} \to V_{d+e-2r}$ is classically
called the {\it $r$th transvection}. It is given by the following
formula:
\[\tag{T1}\label{transvec1.form}
f\otimes h \mapsto (f,h)_{r} := \sum_{i=0}^{r}(-1)^{i}\binom{r}{i}
\frac{\partial^{r}f}{\partial x^{r-i}
\partial y^{i}} \frac{\partial^{r}h}{\partial x^{i}\partial y^{r-i}}.
\]
The second symmetric power $S^{2}(V_{d})$ has the decomposition
$$
S^{2}(V_{d}) \simto V_{2d}\oplus V_{2d-4}\oplus V_{2d-8}\oplus
\cdots.
$$
Therefore, the quadratic covariants $\tau_r\colon V_d \to
V_{2d-2r}$, $f\mapsto (f,f)_{r}$, are non-zero only for even $r$,
and they are given by
\[\tag{T2}\label{transvec2.form}
\tau_r(f) = (f,f)_{r} = \sum_{i=0}^{r}(-1)^{i}\binom{r}{i}
\frac{\partial^{r}f}{\partial x^{r-i}
\partial y^{i}} \frac{\partial^{r}f}{\partial x^{i}\partial y^{r-i}} \in V_{2d-2r}.
\]
For the non-vanishing of the covariant $\tau_r$ on the nullforms the
following lemma is crucial.

\begin{lem}\label{nonzero-transvection.lem}
For $d =2m$ and an even $r=2s < d$, the transvection
$$
\tau_{2s}(x^{m-1}y^{m+1}) = (x^{m-1}y^{m+1},x^{m-1}y^{m+1})_{2s} \in
V_{2d-4s}
$$
is equal to $c_{m,2s}\cdot x^{d-2s-2}y^{d-2s+2}$ where
$$
c_{m,2s} =
(-1)^{s}(2s)!(s!)^{2}\binom{m-1}{s}\binom{m+1}{s}\binom{2m-s}{s}
\neq 0.
$$
\end{lem}
For the proof we will need some properties of the hypergeometric
function $\Ftt(a_1,a_2,a_3;b_1,b_2;z)$ which we discuss in the
following Section~\ref{hyper.subsec}. The proof of the lemma is then
given in Section~\ref{proof.subsec}.

\begin{proof}[Proof of Lemma~\ref{covariants.lem}]
As above, $\tau_r\colon V_d \to V_{2d-2r}$ denotes the quadratic
covariant $f \mapsto (f,f)_r$ which is nonzero only for even $r$.
\par\smallskip
(a) If $d = 2m+1$, then $\tau_{2m}\colon V_{d} \to V_{2}\simeq
\sltwo$, and  $\tau_{2m}(x^{m}y^{m+1})$ is a non-zero multiple of
$y^{2}$. In fact, for $r=2m$, the sum (\ref{transvec2.form}) has a
single term, namely for $i=m$. This proves (1).
\par\smallskip
(b) Now assume that $d$ is even, $d = 2m$. Then $\tau_{2m-2}\colon
V_{d} \to V_{4}$ has the property that $\tau_{2m-2}(x^{m-1}y^{m+1})$
is a non-zero multiple of $y^{4}\in V_4[4]$. In fact, the sum
(\ref{transvec2.form}) has a single term, namely for $i=m-1$. Since
$\sltwo \otimes \sltwo \simeq V_{0}\oplus V_{2}\oplus V_{4}$ and
$(\sltwo\otimes\sltwo)[4] = \sltwo[2]\otimes \sltwo[2]\simeq
V_4[4]$, we thus get $\phi_{0}\colon V_{d}\to
\sltwo\otimes\sltwo$ a covariant with the property claimed in (2).
\par\smallskip
(c) If $d=2m$ and $m$ even, then, by Lemma~\ref{nonzero-transvection.lem}, $\tau_{m}\colon V_{d} \to V_{d}$ is a quadratic covariant such that $\tau_m(V_{d}[2])=V_{d}[4]$, proving (3). %Here we only use that $c_{m,m}\neq 0$.
\par\smallskip
(d) Finally, if $d=2m$ and $m=2k+1$ is odd, then there is no
quadratic covariant, because $V_{d}$ does not appear in the
decomposition of $S^{2}(V_{d})$. But, for $m\geq 5$, there is a
homogeneous covariant $\psi$ of degree 4 with the required property.
For even $k$ we take
$$
\psi\colon V_{d} \to V_{d}, \ f\mapsto
((f,f)_{3k},(f,f)_{3k+2})_{1},
$$
and for odd $k$
$$
\psi\colon V_{d} \to V_{d}, \ f\mapsto
((f,f)_{3k-1},(f,f)_{3k+3})_{1}.
$$
By Lemma~\ref{nonzero-transvection.lem},
$(x^{m-1}y^{m+1},x^{m-1}y^{m+1})_{r}$ is a nonzero multiple of
$x^{2m-r-2}y^{2m-r+2}$ for every even $r<2m$. It remains to see that
the  transvections $(x^{k}y^{k+4},x^{k-2}y^{k+2})_{1}$ for even $k$
and $(x^{k+1}y^{k+5},x^{k-3}y^{k+1})_{1}$  for odd $k$ are nonzero.
This follows from the transvection formula~(\ref{transvec1.form})
above which gives
\begin{gather*}
(x^{k}y^{k+4},x^{k-2}y^{k+2})_{1}= 8\cdot x^{2k-3}y^{2k+5} = 8 \cdot x^{m-4}y^{m+4},\\
(x^{k+1}y^{k+5},x^{k-3}y^{k+1})_{1}= 16\cdot x^{2k-3}y^{2k+5} = 16
\cdot x^{m-4}y^{m+4}.
\end{gather*}
This proves (4).
\end{proof}

\ps
%%%%%%%%%%%%%%%%%%%%%%%%%%%%%%%%%%%%%%%
\subsection{The hypergeometric function $\Ftt$}\label{hyper.subsec}
The \name{Pochhammer} function $(z)_n:= z(z+1)\cdots(z+n-1)$ is
defined for $z \in \CC$ and any integer $n\geq0$ where we set
$(z)_0=1$. Note that $(z)_n=0$ if $z$ is a negative integer $>-n$.
The {\it hypergeometric function\/} $\Ftt(a_1,a_2,a_3;b_1,b_2;z)$ is
defined by the following convergent series
$$
\Ftt(a_1,a_2,a_3;b_1,b_2;z) :=\sum_{k=0}^{\infty}
\frac{(a_1)_k(a_2)_k(a_3)_k}{(b_1)_k(b_2)_k} \frac{z^k}{k!}
$$
where $a_1,a_2,a_3,b_1,b_2 \in \CC$ and $b_1,b_2 \notin
\{0,-1,-2,-3,\ldots\}$, see \cite{Sl1966Generalized-Hyperg}. The
$\Ftt$-series can be evaluated by means of \name{Dixon}'s summation
formula (see \cite[formula~2.3.3.6 on
page~52]{Sl1966Generalized-Hyperg}):
\[\tag{$*$}
\Ftt(a,b,-n;1+a-b,1+a+n;1)
=\frac{(1+a)_{n}(1+\frac{a}{2}-b)_{n}}{(1+\frac{a}{2})_{n}(1+a-b)_{n}},
\]
where $n$ is a nonnegative integer. As mentioned above, the series
is well-defined if neither $1+a-b$ nor $1+a+n$ belong to $ \{0,-1,-2,-3,\ldots\}$. The
right-hand-side is a rational function in $a,b$, namely a quotient
of products of linear terms, and there is some cancellation in the
quotient $\frac{(1+a)_{n}}{(1+\frac{a}{2})_n}$ if $n>1$, e.g.
$\frac{(1+a)_{2}}{(1+\frac{a}{2})_2}=
\frac{(1+a)(2+a)}{(1+\frac{a}{2})(2+\frac{a}{2})}=
\frac{2(1+a)}{2+\frac{a}{2}}$. More precisely, setting $a=2z$, we
find
\begin{align*}
\frac{(1+2z)_{n}}{(1+z)_{n}} &= \frac{(2z+1)(2z+2)\cdots(2z+n)}{(z+1)(z+2)\cdots(z+n)} = \\
&= \begin{cases}
\displaystyle\frac{(2z+1)(2z+3)\cdots(2z+2\ell+1)2^{\ell}}{(z+\ell+1)(z+\ell+2)\cdots(z+2\ell+1)}&\text{ if $n=2\ell+1$ is odd,}\\
\displaystyle\frac{(2z+1)(2z+3)\cdots(2z+2\ell-1)2^{\ell}}{(z+\ell+1)(z+\ell
+2)\cdots(z+2\ell)} &\text{ if $n = 2\ell$ is even.}
\end{cases}
\end{align*}
This shows that the poles of the right hand side of $(*)$ are the
even integers $a$ such that $-\ell-1\geq \frac{a}{2} \geq -n$. But
this implies that $1+a+n$ is a negative integer, and these values
are excluded in the definition of $\Ftt$.

\ps
%%%%%%%%%%%%%%%%%%%%%%%%%%%%%%%%%%%%%%%
\subsection{Proof of Lemma~\ref{nonzero-transvection.lem}}\label{proof.subsec}
The following proof was communicated to us by \name{Christian
Krattenthaler}. From formula \eqref{transvec2.form} we get
\[\tag{$**$}
c_{m,r}=\sum_{i=0}^{r}(-1)^i
\binom{r}{i}(m-r+i)_{r-i}\,(m-r+i+2)_{r-i}\, (m-i+2)_i\,(m-i)_i,
\]
It follows that for a fixed integer $r\geq 0$ the coefficient
$c_{m,r}$ is a polynomial in $m$, and the same holds for the claimed
expression of $c_{m,2s}$ given in
Lemma~\ref{nonzero-transvection.lem} above. Therefore, for a given
$r=2s$,  it suffices to prove the equality for infinitely many $m$.
We will do this for all integers $m\geq r+1$ what we assume from now
on.

%For this we use the
%hypergeometric function $\Ftt(a_1,a_2,a_3;b_1,b_2;z)$ which is defined by the following series:
%$$
%\Ftt(a_1,a_2,a_3;b_1,b_2;z) :=\sum_{k=0}^{\infty} \frac{(a_1)_k(a_2)_k(a_3)_k}{(b_1)_k(b_2)_k}
%\frac{z^k}{k!}.
%$$
Using the following obvious identities for the \name{Pochhammer}
function $(z)_n$
\begin{gather*}
\frac{(m-r+i)_{r-i}}{(m-r)_r} = \frac{1}{(m-r)_i},\qquad\frac{(m-r+i+2)_{r-i}}{(m-r+2)_r} = \frac{1}{(m-r+2)_i},\\
(m-i)_i = (-1)^i(-m+1)_i\, \qquad (m-i+2)_i = (-1)^i(-m-1)_i, \\
\binom ri = \frac{1}{i!} (r-i+1)_i = (-1)^i\frac{1}{i!} (-r)_i,
\end{gather*}
we find for the summands in $(**)$
$$
(-1)^i \binom{r}{i} (m-r+i)_{r-i} (m-r+i+2)_{r-i} (m-i+2)_i (m-i)_i 
=  (m-r)_r (m-r+2)_r \, \frac{1}{i!} \frac{(-r)_i (-m-1)_i
(-m+1)_i}{(m-r+2)_i (m-r)_i},
$$
hence
$$
c_{m,r} = (m-r)_r (m-r+2)_r \;\Ftt(-r,-m-1,-m+1;m-r+2,m-r;1).
$$
The $\Ftt$-series can be evaluated by means of \name{Dixon}'s
summation formula $(*)$ above where
%\[
%\Ftt(a,b,-n;1+a-b,1+a+n;1)
%=\frac{(1+a)_{n}(1+\frac{a}{2}-b)_{n}}{(1+\frac{a}{2})_{n}(1+a-b)_{n}},
%\]
%where $n$ is a nonnegative integer (see \cite[Appendix~III.9]{Sl1966Generalized-Hyperg}).
$a=-r$, $b = -m-1$, $n=m-1$:
\[\tag{$*\!*\!*$}
\Ftt(-r,-m-1,-m+1;m-r+2,m-r;1) =
\frac{(1-r)_{m-1}(m+2-r/2)_{m-1}}{(1-r/2)_{m-1}(m+2-r)_{m-1}}.
\]
As we have seen above this equality holds for a positive integer
$m\geq 1$ and any $r\in\CC$ as long as $m-r$ is not a negative
integer.
%For a fixed positive integer $m\geq 1$ the right hand side is a rational function in $r$ vanishing for a positive odd $r \leq m-1$.
Setting $r =2s$ we get (see the calculation in
Section~\ref{hyper.subsec} above):
\begin{align*}
\frac{(1-2s)_{m-1}}{(1-s)_{m-1}} &= \frac{(2s-1)(2s-2)\cdots(2s-(m-1))}{(s-1)(s-2)\cdots(s-(m-1))} = \\
&= \begin{cases}
\displaystyle\frac{(2s-1)(2s-3)\cdots(2s-2\ell+1)2^{\ell-1}}{(s-\ell)(s-\ell-1)\cdots(s-2\ell+1)}&\text{ if $m=2\ell$ is even,}\\
\displaystyle\frac{(2s-1)(2s-3)\cdots(2s-2\ell+3)2^{\ell-1}}{(s-\ell)(s-\ell
-1)\cdots(s-2\ell+2)} &\text{ if $m = 2\ell -1$ is odd.}
\end{cases}
\end{align*}
Hence, this fraction is well-defined in the given range $r=2s\leq
m-1$, since this means that $s \leq \ell-1$ in both cases, and so
all factors in the denominator are strictly negative integers. In
both cases the denominator can be written as
$(-1)^\ell\frac{(m-s-1)!}{(\ell-s-1)!}$. For the numerator, we find
in case $m=2\ell$:
\begin{multline*}
2^{\ell-1}(2s-1)(2s-3)\cdots 3\cdot 1 \cdot (-1)\cdot (-3)\cdots(-(2\ell-2s-1))=\\
=2^{\ell - 1}\frac{(2s)!}{2^s\cdot
s!}\cdot(-1)^{\ell-s}\frac{(2\ell-2s)!}{2^{\ell-s}(l-s)!} =
\frac{(-1)^{\ell-s} (2s)!(2\ell-2s)!}{2 \,s!(\ell-s)!},
\end{multline*}
and for $m = 2\ell -1$:
\begin{multline*}
2^{\ell-1}(2s-1)(2s-3)\cdots 3\cdot 1 \cdot (-1)\cdot (-3)\cdots(-(2\ell-2s-3))=\\
=2^{\ell-1}\frac{(2s)!}{2^s\cdot
s!}\cdot(-1)^{\ell-s}\frac{(2\ell-2s-2)!}{2^{\ell-s-1}(l-s-1)!} =
\frac{(-1)^{\ell-s} (2s)!(2\ell-2s-2)!}{s!(\ell-s-1)!}.
\end{multline*}
This gives for the right hand side of $(*\!*\!*)$ for an even
$m=2\ell$
$$
(-1)^s\frac{ (2s)!(2\ell-2s)!}{2
\,s!(\ell-s)!}\cdot\frac{(\ell-s-1)!}{(m-s-1)!} =
(-1)^s\frac{(2s)!(m-2s-1)!}{s!(m-s-1)!},
$$
and the same for an odd $m=2\ell-1$
$$
(-1)^s\frac{(2s)!(2\ell-2s-2)!}{s!(\ell-s-1)!}\cdot
\frac{(\ell-s-1)!}{(m-s-1)!} =
(-1)^s\frac{(2s)!(m-2s-1)!}{s!(m-s-1)!}.
$$
The remaining factors are
\begin{align*}
(m-r+2)_r (m-r)_r \frac{(m+2-r/2)_{m-1}}{(m+2-r)_{m-1}} &=
\frac{(m+1)!}{(m-2s+1)!}\frac{(m-1)!}{(m-2s-1)!}\frac{(2m-s)!}{(m-s+1)!}\frac{(m-2s+1)!}{(2m-2s)!} \\
&=\frac{(m+1)!(m-1)!(2m-s)!}{(m-2s-1)!(m-s+1)!(2m-2s)!}.
\end{align*}
Hence
\begin{align*}
c_{m,2s} &= (-1)^s\frac{(2s)!(m-2s-1)!}{s!(m-s-1)!}\cdot\frac{(m+1)!(m-1)!(2m-s)!}{(m-2s-1)!(m-s-1)!(2m-2s)!}\\
&= (-1)^s\frac{(2s)!(m+1)!(m-1)!(2m-s)!}{s!(m-s-1)!(m-s+1)!(2m-2s)!}\\
&=(-1)^{s}(2s)!(s!)^{2}\binom{m-1}{s}\binom{m+1}{s}\binom{2m-s}{s},
\end{align*}
as claimed. \qed

\ps
%%%%%%%%%%%%%%%%%%%%%%%%%%%%%%%%%%%%%%
\subsection{$\VSL$-symmetric subspaces of the nullforms}
We will now determine the minimal $\VSL$-symmetric subspaces of the
nullforms $\NNN(V_{d})$ and calculate the first integrals.

\begin{prop}\label{thm3}
Let $d =2m+1$ be odd, $d\geq 3$. \be
\item $d(\NNN(V_{d})) = m$.
\item $V_{d}^{+}$ is a minimal $\SLtwo$-symmetric subspace of $\NNN(V_{d})$ of dimension $m$.
\item If $M \subset \NNN(V_{d})$ is a minimal $\SLtwo$-symmetric subspace of dimension $m$, then $M = gV_{d}^{+}$ for some $g \in\SLtwo$.
\item $\NNN(V_{d})\quot \End_{\SLtwo}(V_{d}) \simeq \SLtwo/B \simeq \PP^{1}$.
\item $\FFF_{\SLtwo}(\NNN(V_{d})) \simeq \bk(\SLtwo/B)$, in particular $\FFF_{\SLtwo}(\NNN(V_{d}))^{\SLtwo} =\bk$.
\ee
\end{prop}
\begin{proof}
(a) Consider the covariants $\Phi_{s}(\phi,\id)\colon V_{d}\to
V_{d}$ defined above where $\phi$ is the composition
$$
\begin{CD}
\phi \colon V_{d} @>{\phi_{0}}>> \sltwo @>{\ad}>> \End(V_{d})
\end{CD}
$$
and $\phi_{0}\colon V_{d}\to \sltwo$ is from
Lemma~\ref{covariants.lem}(1). By construction, we get
$$
\Phi_{s}(V_{d}[1]) = \ad\nn^{s}V_{d}[1] = V_{d}[2s+1].
$$
This shows that $\End_{\SLtwo}(V_{d}) (V_{d}[1] )= V_{d}^{+}$, hence
(1) and (2).
\par\smallskip
(b) Let  $M = M(f)$ be of dimension $m$. There is a $g\in\SLtwo$
such that $gf \in V_{d}^{+}$, hence $gM(f) = M(gf)$ is contained in 
$V_{d}^{+}$. Since $\dim M(f) = m$ we get $gM(f)  = V_{d}^{+}$. This
gives (3) and shows that $\SLtwo$ acts transitively on the subspaces
$M(f)\subset \NNN(V_{d})$ of dimension $m$, thus on the image of
$\pi\colon \NNN(V_{d}) \to \Gr_{m}(V_{d})$. Since the normalizer of
$V_{d}^{+}$ is $B$, we finally get (4) and (5).
\end{proof}

\begin{prop}\label{d=4m.thm}
Let $d = 2m$ and $m$ even. \be
\item $d(\NNN(V_{d})) = m$.
\item $V_{d}^{+}$ is a minimal $\SLtwo$-symmetric subspace of $\NNN(V_{d})$ of dimension $m$.
\item If $M \subset \NNN(V_{d})$ is a minimal $\SLtwo$-symmetric subspace of dimension $m$, then $M = gV_{d}^{+}$ for some $g \in\SLtwo$.
\item $\NNN(V_{d})\quot \End_{\SLtwo}(V_{d}) \simeq \SLtwo/B \simeq \PP^{1}$.
\item $\FFF_{\SLtwo}(\NNN(V_{d})) \simeq \bk(\SLtwo/B)$, in particular $\FFF_{\SLtwo}(\NNN(V_{d}))^{\SLtwo} =\bk$.
\ee
\end{prop}
\begin{proof}
Define the following covariant
$$
\begin{CD}
\phi \colon V_{d} @>{\phi_{0}}>> \sltwo\otimes\sltwo @>{\alpha}>>
\End(V_{d})
\end{CD}
$$
where $\phi_{0}$ is from Lemma~\ref{covariants.lem}(2), and $\alpha$
is the linear $\SLtwo$-equivariant map $A\otimes B \mapsto \ad A
\circ \ad B$. Then the covariants $\Phi_{s}(\phi,\id)\colon V_{d}\to
V_{d}$ satisfy $\Phi_{s}(V_{d}[2]) = (\ad\nn)^{2s} V_{d}[2]=V_{d}[4s
+ 2]$, and for the covariants $\Phi_{s}(\phi,\psi)$ where $\psi$ is
from Lemma~\ref{covariants.lem}(3) we get $\Phi_{s}(V_{d}[2]) =
\ad\nn^{2s}V_{d}[4] = V_{d}[4s+4]$. As a consequence, we get
$\End_{\SLtwo}(V_{d})(V_{d}[2]) = V_{d}^{+}$, hence (1) and (2). The
remaining claims follow as in the proof of Proposition~\ref{thm3}.
\end{proof}

If $d = 2m$ and $m$ odd we define $V_{d}^{++}:=V_{d}[2] \oplus
V_{d}[6] \oplus V_{d}[8] \oplus \cdots$.
\begin{prop} Let $d =2m$ and $m$ odd, $m\geq 3$.
\be
\item $d(\NNN(V_{d})) = m-1$.
\item $V_{d}^{++}$ is a minimal $\SLtwo$-symmetric subspace of $\NNN(V_{d})$ of dimension $m-1$.
\item If $M \subset \NNN(V_{d})$ is a minimal $\SLtwo$-symmetric subspace of dimension $m-1$, then $M = gV_{d}^{++}$ for some $g \in\SLtwo$.
\item $\NNN(V_{d})\quot \End_{\SLtwo}(V_{d}) \simeq \SLtwo/T$.
\item $\FFF_{\SLtwo}(\NNN(V_{d})) \simeq \bk(\SLtwo/T)$, in particular $\FFF_{\SLtwo}(\NNN(V_{d}))^{\SLtwo} =\bk$.
\ee
\end{prop}
\begin{proof}
(a) We first remark that there is no quadratic covariant $\phi\colon
V_{d}\to V_{d}$, and so $V_{d}^{++}$ is stable under
$\E:=\End_{\SLtwo}(V_{d})$. Now we use the covariants
$\Phi_{s}(\phi,\id)$, as in the proof of the previous proposition, to
show that $\E (V_{d}[2]) \supset V_{d}[4s+2]$. Moreover, the
covariants $\Phi_{s}(\phi,\psi)$ with $\psi$ from
Lemma~\ref{covariants.lem}(4) imply that the inclusion $\E(V_{d}[2]) \supset V_{d}[4s+8]$ holds. It follows that $\E (V_{d}[2]) = V_{d}^{++}$, hence
(1) and (2).
\par\smallskip
(b) Using again that there are no quadratic covariants, we see that
$$\E (V_{d}[4]) \subseteq V_{d}[4] \oplus V_{d}[8] \oplus
V_{d}[10]\oplus\cdots,$$ hence $\dim \E (V_{d}[4]) \leq m-2$.
Therefore, $V_{d}^{++}$ is the only minimal $\SLtwo$-symmetric
subspace of $V_{d}^{+}$ of dimension $m-1$. Now the remaining claims
follow as before, using that the normalizer of $V_{d}^{++}$ is $T$.
\end{proof}

%\begin{rem}
%Part (4) of Lemma~\ref{covariants.lem} was only used in the proof of the last theorem.
%Hence we have a rigorous proof of Theorem~\ref{thm3} and Theorem~\ref{d=4m.thm}, independent of the symbolic calculations done with Mathematica (see Remark~\ref{Konvalinka.rem}).
%\end{rem}
\begin{exa}
The minimal orbit $O_{0} \subset V_{d}$ is the orbit of $y^{d}$.
Denote by $O_{1}$ the orbit of $xy^{d-1}$. Then $X:=\overline{O_{1}}
= O_{1}\cup O_{0}\cup \{0\}$. We claim that $X$ is
$\SLtwo$-symmetric and that $\End_{\SLtwo}(X) = \bk\cdot\id$ in case
$d \geq 5$. In fact, the image of $xy^{d-1}\in V_{d}^{+}$ under a
homogeneous $\phi\in\End_{\SLtwo}(X)$ is again a weight vector of
positive weight, hence a multiple of some $x^{\ell}y^{d-\ell}$ where
$\ell < d- \ell$. Since the stabilizer of $x^{\ell}y^{d-\ell}$ in
$\SLtwo$ is cyclic of order $d-2\ell$ for $\ell < d-\ell$, we see
that $\phi(xy^{d-1})$ is a multiple of $xy^{d-1}$ if $d>4$. (For
$d=4$, $X$ is the nullcone $\NNN(V_{4})$, and the quadratic
covariant $\phi$ sends $O_{1}$ onto $O_{0}$, see
Proposition~\ref{d=4m.thm}.) This implies that $\phi|_{O_{1}} =
\lambda\cdot \id$ for some $\lambda \in\bk$, hence $\phi|_{X} =
\lambda\cdot \id$. As a consequence, $X'=X \setminus\{0\}$, and
$X\quot \E = \PP(X) \subset \PP(V_{d})$.
\end{exa}

\begin{exa}\label{non-liftable-VF.exa}
Let $d =2m$ be even and consider $V_{d}^{+}$ as a $B$-module. It is
not difficult to see that there is always a $B$-covariant $\phi$ of
degree 2. E.g. for $d=6$ it is given by
$$
\phi(a_{1}\cdot x^{2}y^{4}+a_{2}\cdot xy^{5}+a_{3}\cdot y^{6}) =
2a_{1}^{2}\cdot xy^{5} + a_{1}a_{2}\cdot y^{6}.
$$
On the other hand, for $d=2m\geq 6$ and $m$ odd there is no
$\SLtwo$-covariant of $V_{d}$ of degree 2
(Lemma~\ref{covariants.lem}(4)). Since
$\End_{\SLtwo}(V)=\End_{B}(V)$ for every $\SLtwo$-module $V$, we see
that for $d\equiv 2\mod 4$ and $d\geq 6$ the restriction map $\End_{B}(V_{d}) \to \End_{B}(V_{d}^{+})$ is not surjective.
\end{exa}
\addtocounter{section}{1} \setcounter{subsection}{0}

\ms
%%%%%%%%%%%%%%%%%%%%%%%%%%%%%%%%%%%%%%%%%%
\section*{Appendix: Ind-varieties and ind-semigroups}
\addcontentsline{toc}{section}{Appendix: Ind-varieties and
ind-semigroups}\refstepcounter{section}\renewcommand{\thesection}{A}

An introduction to ind-varieties and ind-groups can be found in
\name{Kumar}'s book \cite[Chapter IV]{Ku2002Kac-Moody-groups-t}.
%%%%%%%%%%%%%%%%%%%%%%%%%%%%%%%%%%%%%%%%%%
\subsection{Basic definitions}
The following is borrowed from \cite{FuKr2015On-the-geometry-of}.

\begin{defn}\label{indvar.def}
An {\it ind-variety} $\VVV$ is a set together with an ascending
filtration $\VVV_{0}\subset \VVV_{1}\subset \VVV_{2}\subset
\cdots\subset \VVV$ such that the following holds: \be
\item $\VVV = \bigcup_{k \in \NN}\VVV_{k}$;
\item Each $\VVV_{k}$ has the structure of an algebraic variety;
\item For all $k \in \NN$ the inclusion  $\VVV_{k}\into \VVV_{k+1}$ is a closed immersion of algebraic varieties.
\ee
\end{defn}

A {\it morphism} between ind-varieties $\VVV$ and $\WWW$  is a map
$\phi\colon \VVV \to \WWW$  such that for any $k$ there is an $m$
such that $\phi(\VVV_{k}) \subset \WWW_{m}$ and that the induced map
$\VVV_{k}\to \WWW_{m}$ is a morphism of varieties. {\it
Isomorphisms} of ind-varieties are defined in the obvious way.

Two filtrations $\VVV = \bigcup_{k \in \NN} \VVV_{k}$ and $\VVV =
\bigcup_{k \in \NN} \VVV_{k}'$ are called {\it equivalent\/} if for
any $k$ there is an $m$ such that $\VVV_{k}\subset \VVV_{m}'$ is a
closed subvariety as well as $\VVV_{k}'\subset \VVV_{m}$.
Equivalently,  the identity map
$$\id \colon \VVV = \bigcup_{k \in
\NN} \VVV_{k} \to \VVV = \bigcup_{k \in \NN} \VVV_{k}'$$
is an isomorphism of ind-varieties.

\begin{defn}
The {\it Zariski topology} of an ind-variety
$\VVV=\bigcup_{k}\VVV_{k}$ is defined by declaring a subset $U
\subset \VVV$ to be open if the intersections $U \cap\VVV_{k}$ are
Zariski-open in $\VVV_{k}$ for all $k$. It is obvious that $A
\subset \VVV$ is closed if and only if  $A \cap\VVV_{k}$ is
Zariski-closed in $\VVV_{k}$ for all $k$. It follows that a locally
closed subset $\WWW \subset \VVV$ has a natural structure of an
ind-variety, given by the filtration $\WWW_{k}:=\WWW \cap \VVV_{k}$
which are locally closed subvarieties of $\VVV_{k}$. These subsets
are called {\it ind-subvarieties}.

A morphism $\phi\colon \VVV \to \WWW$ is called an {\it immersion}
if the image $\phi(\VVV) \subset \WWW$ is locally closed and $\phi$
induces an isomorphism $\VVV \simto \phi(\VVV)$ of ind-varieties. An
immersion $\phi$ is called a {\it closed (resp. open) immersion} if
$\phi(\VVV) \subset \WWW$ is closed (resp. open).
\end{defn}

\begin{defn}\label{affine-algebraic.def}\strut
\be
\item An ind-variety $\VVV$ is called {\it affine} if it admits a filtration such that all $\VVV_{k}$ are affine. It follows that any filtration of $\VVV$ has this property.
\item The {\it algebra of regular functions\/} on $\VVV=\bigcup \VVV_{k}$ is defined as
$$
\OOO(\VVV):= \Mor(\VVV,\Aone) = \varprojlim \OOO(\VVV_{k})
$$
It will always be regarded as a topological algebra with the obvious
topology as an inverse  limit of finitely generated algebras. The homomorphism
$\phi^{*}\colon\OOO(\WWW) \to \OOO(\VVV)$ induced by any morphism $\phi\colon \VVV \to \WWW$
is continuous. Moreover,
an affine ind-variety $\VVV$ is uniquely determined by the
topological algebra $\OOO(\VVV)$.
\item
The {\it Zariski tangent space\/} of an ind-variety
$\VVV=\bigcup_{k}\VVV_{k}$ is defined in the obvious way:
$$
T_{v}\VVV:=\varinjlim T_{v}\VVV_{k}.
$$
If $\VVV$ is affine, a tangent vector $A \in T_{v}\VVV$ is the
same as a continuous derivation $A\colon\OOO(\VVV)\to \bk$ in $v$.
It is clear that a morphism $\phi\colon \VVV \to \WWW$ between two
ind-varieties induces a linear map between tangent spaces $d\phi_{v}\colon T_{v}\VVV \to
T_{\phi(v)}\WWW$, {\it the differential of $\phi$ in $v$}.
\item
The {\it product of two ind-varieties $\VVV=\bigcup_{k}\VVV_{k}$ and
$\WWW=\bigcup_{j} \WWW_{j}$\/} is the ind-variety defined as
$$\VVV\times\WWW:=\bigcup_{k}\VVV_{k}\times \WWW_{k}.$$
It has the
usual universal properties.
\item An ind-variety $\VVV$ is {\it curve-connected} if for every pair $v,w \in \VVV$ there is an irreducible algebraic curve $C$ and a morphism $\gamma\colon C \to \VVV$ such that $v,w \in\gamma(C)$. One can show that this is equivalent to the existence of a filtration $\VVV=\bigcup_{k}\VVV_{k}$ such that all $\VVV_{k}$ are irreducible (see \cite{FuKr2015On-the-geometry-of}).
\ee
\end{defn}

Since products exist in the category of ind-varieties we can define
ind-groups and ind-semigroups.

\begin{defn}
An {\it ind-group} $\GGG$ is an ind-variety with a group structure
such that multiplication $\GGG\times\GGG \to \GGG$ and inverse $\GGG
\to \GGG$ are morphisms. An {\it ind-semigroup $\SSS$} is defined in
a similar way.
\end{defn}

An {\it action of an ind-group $\GGG$ on a variety $X$} is a
homomorphism $\GGG \to \Aut(X)$ such that the induced map
$\GGG\times X \to X$ is a morphism of ind-varieties. If $X$ is an
affine variety, it is shown in \cite{FuKr2015On-the-geometry-of}
that $\End(X)$ is an affine ind-semigroup and $\Aut(X)$ is an affine
ind-group which is locally closed in $\End(X)$. It follows that an
action of an ind-group $\GGG$ on $X$ is the same as a homomorphism
of ind-groups $\GGG \to \Aut(X)$.

All this carries over to actions of ind-semigroups $\SSS$.

\ps
%%%%%%%%%%%%%%%%%%%%%%%%%%%%%%%%%%%%%%%%
\subsection{Vector fields and Lie algebras}
A {\it vector field $\delta$} on an affine variety $X$ is a
collection $\delta=(\delta(x))_{x\in X}$ of tangent vectors
$\delta(x)\in T_{x}X$ such that, for all $f \in \OOO(X)$, we have
$\delta f \in \OOO(X)$ where $(\delta f)(x):=\delta(x)f$. It follows
that the vector fields $\VF(X)$ can be identified with the
derivations of $\OOO(X)$ which we denote by $\Der(\OOO(X))$.

The same definition can be used for an affine ind-variety $\VVV$,
and one gets an identification of $\VF(\VVV)$ with the {\it
continuous\/} derivations $\Derc(\OOO(\VVV))$. For an affine
ind-group $\GGG$ one shows that the tangent space $T_{e}\GGG$ has a
natural structure of a Lie algebra. It will be denoted by
$\Lie\GGG$.

If $\GGG$ acts on the variety $X$ and $x \in X$ we denote by
$\mu_{x}\colon \GGG \to X$ the orbit map $g\mapsto gx$.

\begin{prop}\label{vector-fields.prop}
Assume that an affine ind-group $\GGG$ acts on an affine variety
$X$. For  $A \in \Lie \GGG$ and $x\in X$ define the tangent vector
$\xi_{A}(x) \in T_{x}X$ to be the image of $A$ under $d\mu_{x}\colon
\Lie\GGG \to T_{x}X$. Then $\xi_{A}$ is a vector field on $X$. The
resulting linear map $\Xi\colon \Lie\GGG \to \VF(X)$, $A \mapsto
\xi_{A}$, is a anti-homomorphism of Lie algebras.
\end{prop}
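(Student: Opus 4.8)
The plan is to handle the two assertions separately: that each $\xi_{A}$ is a genuine vector field is a soft statement that reduces to a single piece of the filtration of $\GGG$, whereas the anti-homomorphism property requires mimicking the classical computation for Lie-group actions, for which the right tool is right-invariant vector fields on $\GGG$.

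First I would prove that $\xi_{A}\in\VF(X)$. Fix $A\in\Lie\GGG=T_{e}\GGG$ and choose, in a filtration of $\GGG$ by affine subvarieties $\GGG_{k}$, an index $k$ with $e\in\GGG_{k}$ and $A$ in the image of $T_{e}\GGG_{k}$. The action morphism restricts to a morphism of affine varieties $\alpha_{k}\colon\GGG_{k}\times X\to X$, so for $f\in\OOO(X)$ we may write the comorphism as a finite sum $\alpha_{k}^{*}(f)=\sum_{i}u_{i}\otimes f_{i}$ with $u_{i}\in\OOO(\GGG_{k})$, $f_{i}\in\OOO(X)$. Since $\mu_{x}$ restricted to $\GGG_{k}$ is $\alpha_{k}(-,x)$, we get $\mu_{x}^{*}(f)=\sum_{i}f_{i}(x)\,u_{i}$ in $\OOO(\GGG_{k})$, hence
\[
(\xi_{A}f)(x)=(d\mu_{x})_{e}(A)(f)=A\bigl(\mu_{x}^{*}(f)\bigr)=\sum_{i}A(u_{i})\,f_{i}(x).
\]
Thus $\xi_{A}f=\sum_{i}A(u_{i})\,f_{i}\in\OOO(X)$, so $\xi_{A}$ is a derivation of $\OOO(X)$, i.e. a vector field, and $\Xi\colon A\mapsto\xi_{A}$ is linear because each $(d\mu_{x})_{e}$ is.

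For the anti-homomorphism property I would introduce, for each $A\in\Lie\GGG$, the right-invariant vector field $\hat{A}\in\VF(\GGG)$ with $\hat{A}(g)=(dR_{g})_{e}(A)$, where $R_{g}\colon h\mapsto hg$; on an affine ind-group this is again a (continuous) derivation of $\OOO(\GGG)$, with $\hat{A}(e)=A$. Two ingredients feed into the computation. The first is the classical fact, valid here because the bracket on $\Lie\GGG$ is defined via left-invariant fields and the inversion morphism $\iota\colon g\mapsto g^{-1}$ carries left-invariant fields to the negatives of right-invariant ones (as $(d\iota)_{e}=-\id$), that $[\hat{A},\hat{B}](e)=-[A,B]$ in $T_{e}\GGG$. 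The second is the identity $\mu_{x}^{*}(\xi_{B}f)=\hat{B}\bigl(\mu_{x}^{*}(f)\bigr)$, which follows from associativity of the action: for $g\in\GGG$,
\[
\mu_{x}^{*}(\xi_{B}f)(g)=(\xi_{B}f)(gx)=B\bigl(\mu_{gx}^{*}(f)\bigr)=B\bigl(R_{g}^{*}\mu_{x}^{*}(f)\bigr)=\hat{B}\bigl(\mu_{x}^{*}(f)\bigr)(g),
\]
using $\mu_{gx}^{*}(f)=R_{g}^{*}\mu_{x}^{*}(f)$ (because $f(h\cdot gx)=f((hg)\cdot x)$). Granting these, and writing $A$ for the point-derivation $v\mapsto(\hat{A}v)(e)$ at $e$, the computation for $f\in\OOO(X)$, $x\in X$ runs
\[
\bigl([\xi_{A},\xi_{B}]f\bigr)(x)=A\bigl(\mu_{x}^{*}(\xi_{B}f)\bigr)-B\bigl(\mu_{x}^{*}(\xi_{A}f)\bigr)=\bigl([\hat{A},\hat{B}]\,\mu_{x}^{*}(f)\bigr)(e)=[\hat{A},\hat{B}](e)\bigl(\mu_{x}^{*}(f)\bigr),
\]
which by the first ingredient equals $-[A,B]\bigl(\mu_{x}^{*}(f)\bigr)=-(d\mu_{x})_{e}([A,B])(f)=-(\xi_{[A,B]}f)(x)$. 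As $f$ and $x$ are arbitrary, $\Xi([A,B])=-[\Xi(A),\Xi(B)]$, i.e. $\Xi$ is an anti-homomorphism of Lie algebras.

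The first step is routine. The hard part is the bookkeeping in the second: one needs that right-invariant vector fields on an affine ind-group are honest elements of $\VF(\GGG)=\Derc(\OOO(\GGG))$, that this Lie algebra is closed under the commutator, and that the left/right-invariance relation via the inversion morphism persists in the ind-setting — none of which is a direct computation, all resting on the ind-variety machinery of the cited references. A minor but essential point is to pin down the sign convention defining the bracket on $\Lie\GGG$ (left-invariant fields), since with the opposite convention $\Xi$ would come out a homomorphism.
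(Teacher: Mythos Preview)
Your argument is correct. For the first assertion your approach coincides with the paper's: both compute $\xi_{A}f$ through the comorphism of the action, obtaining $\xi_{A}f=(A\otimes\id)\circ\phi^{*}(f)$; your restriction to a filtration piece $\GGG_{k}$ is just a device to make the tensor product $\OOO(\GGG_{k})\otimes\OOO(X)$ honestly algebraic, whereas the paper writes the composite $\OOO(X)\xrightarrow{\phi^{*}}\OOO(\GGG)\otimes\OOO(X)\xrightarrow{A\otimes\id}\OOO(X)$ directly at the level of the topological algebra $\OOO(\GGG)$. These are the same computation in slightly different packaging.

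For the anti-homomorphism claim the paper's ``Outline of Proof'' says nothing at all, so your treatment via right-invariant vector fields and the identity $\mu_{x}^{*}(\xi_{B}f)=\hat{B}(\mu_{x}^{*}f)$ actually supplies what the paper omits. The argument is the classical one, and your caveats about the ind-group machinery (continuity of $\hat{A}$ as a derivation, the sign via $d\iota_{e}=-\id$, and the dependence on the left-invariant convention for the bracket) are exactly the points that need checking; they are handled in the reference \cite{FuKr2015On-the-geometry-of} the paper cites. So your proposal is sound, matches the paper where the paper gives details, and fills in the part the paper leaves out.
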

\begin{proof}[Outline of Proof]
The action $\phi\colon\GGG\times X \to X$ defines a homomorphism
$\phi^{*}\colon \OOO(X) \to \OOO(\GGG)\otimes\OOO(X)$. Now consider
the following derivation of $\OOO(X)$:
$$
\begin{CD}
\delta\colon \OOO(X) @>{\phi^{*}}>> \OOO(\GGG)\otimes\OOO(X)
@>{A\otimes\id}>> \OOO(X).
\end{CD}
$$
An easy calculation shows that $(\delta f)(x) = A \mu_{x}^{*}(f)  =
d\mu_{x}(A) f$, hence $\delta = \xi_{A}$.
\end{proof}

It is easy to see that this generalizes to the action of an affine
ind-semigroup $\E$ on an affine variety $X$, $\mu\colon\E \to
\End(X)$, and defines a linear map $\Xi\colon T_{\id}\E \to \VF(X)$
whose image $\DDD_{\E}$ are the {\it corresponding vector fields}.

%%%%%%%%%%%%%%%%%%%%%
% References
%%%%%%%%%%%%%%%%%%%%%

\end{document}